\newcommand{\printfnsymbol}[1]{%
  \textsuperscript{\@fnsymbol{#1}}%
}
\titlespacing*{\section}{0pt}{1.25\baselineskip}{0.25\baselineskip}
\titlespacing*{\subsection}{0pt}{0.75\baselineskip}{0.125\baselineskip}
\titlespacing*{\subsection}{0pt}{0.5\baselineskip}{0.125\baselineskip}
\titlespacing*{\paragraph}{0pt}{0.25\baselineskip}{0.25\baselineskip}
\begin{document}

\title{SPRING: A fast stochastic proximal alternating method for non-smooth non-convex optimization}

\date{}
\author[1]{Derek Driggs
\thanks{Contributed Equally}}
\author[2]{Junqi Tang\printfnsymbol{1}}
\author[3]{Jingwei Liang}
\author[2]{Mike Davies}
\author[1]{Carola-Bibiane Sch\"onlieb}
\affil[1]{Department of Applied Mathematics and Theoretical Physics, University of Cambridge}
\affil[2]{School of Engineering, University of Edinburgh}
\affil[3]{School of Mathematical Sciences, Queen Mary University of London}

\renewcommand\Authands{ and }

\vspace{-5mm}

    \maketitle
    \begin{abstract}
    We introduce SPRING, a novel stochastic proximal alternating linearized minimization algorithm for solving a class of non-smooth and non-convex optimization problems. Large-scale imaging problems are becoming increasingly prevalent due to advances in data acquisition and computational capabilities. Motivated by the success of stochastic optimization methods, we propose a stochastic variant of proximal alternating linearized minimization (PALM) algorithm \cite{bolte2014proximal}. We provide global convergence guarantees, demonstrating that our proposed method with variance-reduced stochastic gradient estimators, such as SAGA \cite{SAGA} and SARAH \cite{sarah}, achieves state-of-the-art oracle complexities. We also demonstrate the efficacy of our algorithm via several numerical examples including sparse non-negative matrix factorization, sparse principal component analysis, and blind image deconvolution.
    \end{abstract}

\section{Introduction}\label{sec:introduction}

With the advent of large-scale machine learning, developing efficient and reliable algorithms for (empirical) risk minimization has become an intense focus of the optimization community. These tasks involve minimizing a loss function measuring the fit between observed data, $x$, and a model's predicted result, $b$: $\min_{x \in \R^{m_1}} ~ \frac{1}{n} \sum_{i=1}^n \mathcal{L}(x_i,b_i) $ where $n$ denotes the number of samples and $\mathcal{L}$ is the loss function. The two defining qualities of these problems are their large scale (in many applications, $n$ is on the order of billions), and finite-sum structure.

When the value of $n$ above is very large, computing the gradient of the loss function is often prohibitively expensive, rendering most traditional deterministic first-order optimization algorithms ineffective. Over the years, randomized optimization algorithms \cite{bottou2010large,sgd} have become increasingly popular due to their efficiency and simplicity. For these algorithms, the full gradient is replaced by a stochastic approximation that is cheap to compute, so that their per-iteration complexity grows slowly with $n$. For objectives with a finite-sum structure, many works have shown that certain randomized algorithms achieve convergence rates similar to those of full-gradient methods, even though their per-iteration complexity is often a factor of $n$ smaller \cite{SAGA,svrg,proxSVRG}.

Outside machine learning, objectives with a finite-sum structure also arise in problems from image processing and computer vision. Recently, randomized optimization algorithms have been explored for image processing tasks including PET reconstruction, deblurring and tomography \cite{spdhg,tang2019}. As stochastic methods expand into new applications, they move further from smooth, strongly convex finite-sum objectives where they are well-understood theoretically. In this work, we aim to provide a better understanding of stochastic algorithms for problems that are neither smooth nor convex.

\subsection{Non-smooth, non-convex optimization}

Our goal is to minimize composite objectives of the following form:
\begin{equation}\label{eq:J-F-R}
\begin{aligned}
\min_{x\in\bbR^{m_1}, y \in \bbR^{m_2}}
\Ba{\Phi(x, y) & \eqdef J(x) + F(x, y) + R(y) },
\end{aligned}
\end{equation}
where $F(x,y) \defeq \frac{1}{n} \sum_{i=1}^n F_i(x, y)$ has a finite-sum structure.
In general, functions $J$ and $R$ are non-smooth regularizations that promote structures, such as sparsity or non-negativity, in the solutions. The blocks $x$ and $y$ represent differently structured elements of the solution that are coupled through the loss term, $F(x,y)$. Throughout this work, we impose the following assumptions:
\begin{enumerate}[label= ({\textbf{A.\arabic{*}})},ref= \textbf{A.\arabic{*}}, leftmargin=3.75em]
\item\label{item:A-JR}
$J: \bbR^{m_1} \to \bbR\cup\ba{\pinf}$ and $R: \bbR^{m_2} \to \bbR\cup\ba{\pinf}$ are proper lower semi-continuous (lsc) functions that are bounded from below;
\item\label{item:A-F1}
$F_i: \bbR^{m_1}\times \bbR^{m_2} \to \bbR$ are finite-valued, differentiable, and their gradients $\nabla F_i$ are $M$-Lipschitz continuous on bounded sets of $\bbR^{m_1} \times \bbR^{m_2}$ for all $i \in \{1, \cdots, n\}$;
\item\label{item:A-F2}
The partial gradients $\nablax F_i$ are Lipschitz continuous with modulus $L_1(y)$, and $\nablay F_i$ are Lipschitz continuous with modulus $L_2(x)$ for all $i \in \{1, \cdots, n\}$;
\item \label{item:A-Flast}
The function $\Phi$ is bounded from below.
\end{enumerate}
No convexity is imposed on any of the functions involved.
Problem \eqref{eq:J-F-R} departs from the sum-of-convex-objectives models that populate the majority of the optimization literature. Many models in machine learning, statistics and image processing require the full generality of \eqref{eq:J-F-R}. Archetypal examples include non-negative or sparse matrix factorization \cite{hoyer2004non}, Sparse PCA \cite{d2005direct,zou2006sparse}, Robust PCA \cite{candes2011robust}, trimmed least-squares \cite{aravkin2019trimmed} and blind image deconvolution \cite{campisi2016blind}. Despite the prevalence of these problems, few numerical methods can solve the general problem \eqref{eq:J-F-R}, and none that realize match the efficiency that randomized algorithms provide. We outline some existing options below.

\paragraph{Proximal alternating minimization}

One approach to solve \eqref{eq:J-F-R} is the Proximal Alternating Minimization (PAM) method \cite{attouch2010proximal}, whose iterations take the following form:
\begin{equation}
\begin{aligned} \label{eq:pam}
\xkp &\in  \Argmin_{x \in \R^{m_1}} \Ba{\Phi(x, \yk) + \tfrac{1}{2\gmaxk} \norm{x - \xk}^2} ,  \\
\ykp &\in  \Argmin_{y \in \R^{m_2}} \Ba{\Phi(\xkp, y) + \tfrac{1}{2\gmayk} \norm{y - \yk}^2}  ,
\end{aligned}
\eeq
where $\gmaxk, \gmayk > 0$ are step-sizes. A significant limitation of PAM is that the subproblems in \eqref{eq:pam} do not have closed-form solutions in general. As a consequence, each subproblem requires its own set of inner iterations, which makes PAM inefficient in practice.

\paragraph{Proximal alternating linearized minimization \cite{bolte2014proximal}}

To circumvent this limitation of PAM, Proximal Alternating Linearized Minimization (PALM) \cite{bolte2014proximal} replaces PAM's two subproblems with their proximal linearizations. PALM's iterations take the form
\begin{equation}
\begin{aligned}
\label{eq:palm}
\xkp &\in \prox_{\gamma_{x,k} J} \Pa{\xk - \gamma_{x,k} \nablax F (\xk, \yk)} , \\
\ykp &\in \prox_{\gmayk R} \Pa{\yk - \gmayk \nablay F (\xkp, \yk)}  ,
\end{aligned}
\end{equation}
where $\nablax F$ and $\nablay F$ are partial derivatives, and $\prox_{\gamma_{x,k} J}$ is called ``proximal operator'' of $J$ and defined by
$$
\prox_{\gamma J} (\cdot) \defeq \Argmin_x \gamma J(x) + \tfrac{1}{2} \norm{x - \cdot}^2 .
$$
The proximal mapping is set-valued and becomes single-valued if $J$ is convex.

In contrast to PAM, each subproblem of PALM can be efficiently computed if the proximal maps of $J$ and $R$ are easy to calculate, which is true in many applications.
PALM also has the same convergence guarantees as PAM, so linearizing $F$ in each proximal step is a clear improvement over PAM. PALM with momentum is considered in \cite{pock2016inertial}, where the authors show that inertia allows PALM to converge to critical points with lower objective values, although accelerated rates might not be obtained.

\subsection{Stochastic PALM}

In this work, we introduce SPRING, a randomized version of PALM where the partial gradients $\nablax F(x_k,y_k)$ and $\nablay F(x_{k+1},y_k)$ in \eqref{eq:palm} are replaced by random estimates, $\tnablax (x_k,y_k)$ and $\tnablay (x_{k+1},y_k)$, formed using the gradients of only a few indices $\nablax F_j(x_k, y_k)$ and $\nablay F_j(x_{k+1}, y_k)$ for $j \in B_k \subset \{1,2,\cdots,n\}$. The mini-batch $B_k$ is chosen uniformly at random from all subsets of $\{1,2,\cdots,n\}$ with cardinality $b$. We describe SPRING in Algorithm \ref{alg:spring} below.

\begin{algorithm}
\caption{SPRING: \textbf{S}tochastic \textbf{Pr}oximal Alternat\textbf{ing} Linearized Minimization}
\label{alg:spring}
\begin{algorithmic}
\STATE{{\noindent{\bf{Initialize}}}: $x_{0} \in \bbR^{m_1}, y_{0} \in \bbR^{m_2}$.}
\FOR{$k = 1,2, \cdots, T-1$}
\STATE{$\xkp \in \prox_{\gamma_{x,k} J} \Pa{\xk - \gmaxk \tnablax (\xk, \yk)}$}
\STATE{$\ykp \in \prox_{\gamma_{y,k} R} \Pa{\yk - \gmayk \tnablay (\xkp, \yk)}$}
\ENDFOR
\RETURN $(x_T, y_T)$
\end{algorithmic}
\end{algorithm}

Many different gradient estimators in the literature can be used for SPRING. The simplest one is the stochastic gradient descent (SGD) estimator \cite{robbins1971},
\begin{equation*}
    \tnablaxsgd(\xk, \yk) = \tfrac{1}{b} \msum_{j \in B_k} \nablax F_j (\xk, \yk) ,
\end{equation*}
which uses the gradient of a randomly sampled batch to represent the full gradient.
Another popular choice is SAGA gradient estimator \cite{SAGA}, which incorporates the gradient history:
\begin{equation*}
\begin{aligned}
    \tnablaxsaga(\xk, \yk) & = \tfrac{1}{b} \ssum_{j \in B_k} \Pa{ \nablax F_j (\xk, \yk) - g_{k,j} } + \tfrac{1}{n} \msum_{i=1}^n g_{k,i}, \\
    g_{k+1,i} & = \left\{ \begin{aligned}  &\nablax F_i(\xk, \yk) && \textrm{if~~} i \in B_k  ,  \\ &g_{k,i} && \textrm{o.w.} \end{aligned} \right .
\end{aligned}
\end{equation*}
Both SGD and SAGA estimators are {\it unbiased}. The last gradient estimator we specifically consider in this work is the (loopless) SARAH estimator \cite{l2s,sarah}, $\tnablaxsarah(\xk, \yk)$, which is {\it biased}.
\begin{equation*}
\begin{aligned}
	\begin{cases}
	   \nablax F(x_k,y_k)  & \textrm{w.p. } \tfrac{1}{p} \\
	    \tfrac{1}{b} \ssum_{j \in B_k}  \Pa{ \nablax F_j(x_k, y_k) - \nablax F_j(x_{k-1},y_{k-1}) } + \tnablaxsarah(x_{k-1},y_{k-1}) & \textrm{o.w.}
    \end{cases}
\end{aligned}
\end{equation*}
Here, $p$ is a tuning parameter that is generally set to $\mathcal{O}(n)$. Other popular estimators that can be used in SPRING but that we do not specifically consider include SAG \cite{sag} and SVRG \cite{svrg}.

Computing the full gradient is generally $n$-times more expensive than computing $\nablax F_i$, so when $n$ is large and $b \ll n$, each step of SPRING with any of these estimators is significantly less expensive than that of PALM.

\begin{remark}
    Although we consider only two variable blocks in \eqref{eq:J-F-R}, the results of this paper easily extend to an arbitrary number of blocks to solve problems of the form
    \begin{equation*}
        \min_{ x_1, \cdots, x_\ell } \bBa{ \tfrac{1}{n} \msum_{i=1}^n F_i(x_1, \cdots, x_\ell ) + \msum_{t=1}^\ell R_t(x_t)},
    \end{equation*}
    where each $R_t$ is a (possibly non-smooth) regularizer.
\end{remark}

\subsection{Contributions}

In this work, we combine PALM with popular stochastic gradient estimators and show that the resulting algorithm matches the convergence rates of PALM given that the gradient estimators $\tnablax$ and $\tnablay$ satisfy a \emph{variance-reduced} property (see Definition \ref{def:varred}). We prove convergence guarantees of two types.

\paragraph{Convergence rate of generalized gradient map}
Given $z_k = (x_k, y_k)$, the \emph{generalized gradient map} is defined as
\begin{equation}\label{eq:ggmap}
    \mathcal{G}_{\gamma_1,\gamma_2}(z_k) \defeq
    \begin{pmatrix}
    1 / \gamma_1 \big( x_k - \prox_{\gamma_1 J} (x_k - \gamma_1 \nablax F(x_k,y_k) ) \big) \\
    1 / \gamma_2 \big( y_k - \prox_{\gamma_2 R} (y_k - \gamma_2 \nablay F(x_{k+1},y_k) \big) )
    \end{pmatrix},
\end{equation}
where $\gamma_1, \gamma_2 > 0$ are parameters (not necessarily equal to the algorithm's step-sizes), and a point $z = (x,y)$ an \emph{$\epsilon$-approximate critical point} if it satisfies $\EE \dist(0,\mathcal{G}_{\gamma_1,\gamma_2}(z)) \le \epsilon$ \emph{for some} $\gamma_1, \gamma_2 > 0$. In Section \ref{sec:conv}, we show that
\begin{equation*}
\label{eq:gengrad}
    \EE [\dist(0,\mathcal{G}_{\frac{\gamma_{x,\alpha}}{2},\frac{\gamma_{y,\alpha}}{2}} (z_\alpha))^2] \le \mathcal{O} \big( \tfrac{1}{k} \big),
\end{equation*}
where $\alpha$ is chosen uniformly at random from the set $\{1, 2, \cdots, k\}$.
If $\Phi$ satisfies a certain error bound (see Eq. \eqref{eq:errb}), then SPRING converges linearly to the global optimum.
These results generalize almost all existing results for stochastic gradient methods on non-convex, non-smooth objectives \cite{aravkin2019trimmed,spider,reddi,spiderboost,spiderm}.

Specializing these convergence guarantees to specific gradient estimators, the constants appearing in these rates scale with the mean-squared error (MSE, see Definition \ref{def:varred}) of the gradient estimators.
\begin{itemize}
    \item For SAGA estimator with $b \le \mathcal{O}(n^{2/3})$, the iterates of SPRING satisfy
\begin{equation*}
    \EE [\dist(0, \mathcal{G}_{\frac{\gamma_{x,\alpha}}{2},\frac{\gamma_{y,\alpha}}{2}}(z_\alpha))^2]
    \le \mathcal{O} \big( \tfrac{n^2 L}{b^3 k} \big) .
\end{equation*}
\item For SARAH gradient estimator with any batch size, we have
\begin{equation*}
    \EE [ \dist(0, \mathcal{G}_{\frac{\gamma_{x,\alpha}}{2},\frac{\gamma_{y,\alpha}}{2}}(z_\alpha) )^2] \le \mathcal{O} \big( \tfrac{\sqrt{n} L}{k} \big).
\end{equation*}
\end{itemize}
These convergence rates imply complexity bounds with respect to a \emph{stochastic first-order oracle} (SFO) which returns the partial gradient of a single component $F_i$ (for example, $\nablax F_i(x_k, y_k)$). To find an $\epsilon$-approximate critical point, SAGA with a mini-batch of size $n^{2/3}$ requires no more than $\mathcal{O}( n^{2/3} L / \epsilon^2)$ SFO calls, and SARAH requires no more than $\mathcal{O}( \sqrt{n} L / \epsilon^2 )$. The improved dependence on $n$ when using SARAH gradient estimator exists in all of our convergence rates for SPRING. Because most existing works on stochastic optimization for non-smooth, non-convex problems use models that are special cases of \eqref{eq:J-F-R}, our results for SPRING capture most existing work as special cases. In particular, in the case $R \equiv J \equiv 0$, our results recover recent results showing that SARAH achieves the \emph{oracle complexity lower-bound} for non-convex problems with a finite-sum structure \cite{spider,proxsarah,spiderboost,zhou2019lower,spiderm}.

\paragraph{Convergence under the \KL~property}
We also provide convergence guarantees under the \KL~property (see Definition \ref{defn:KLp}).
First, we prove the global convergence of the generated sequence under the assumption that the objective function $\Phi(x,y)$ of \eqref{eq:J-F-R} has the \KL property.
Then, under the assumption that $\Phi$ is semi-algebraic with KL-exponent $\theta$ (see Section \ref{sec:prelim}), we show that the sequence $z_k = (x_k,y_k)$ generated by SPRING converges in expectation to a critical point $z^\star$ of problem \eqref{eq:J-F-R} at the following rates:
\begin{itemize}
    \item If $\theta = 0$, then $\{\EE \Phi(z_k)\}_{k \in \N}$ converges to $\EE \Phi(z^\star)$ in a finite number of steps.
    \item If $\theta \in (0,1/2]$, then $\EE \|z_k - z^\star\| \le \mathcal{O}(\tau^k)$ for some $\tau \in (0, 1)$.
    \item If $\theta \in (1/2, 1)$, then $\EE \|z_k - z^\star\| \le \mathcal{O} ( k^{- \frac{1 - \theta}{2 \theta - 1}})$.
\end{itemize}
These rates match the rates of the original PALM algorithm.

\subsection{Prior Art}

SPRING offers several advantages over existing stochastic algorithms for non-smooth non-convex optimization. Reddi \emph{et al.\!} investigate proximal SAGA and SVRG for solving problems of the form \eqref{eq:J-F-R} when $y$ is constant and $J$ is convex \cite{reddi}. Using mini-batches of size $b = n^{2/3}$, SAGA and SVRG require $\mathcal{O}(n^{2/3} L / \epsilon^2)$ stochastic gradient evaluations to converge to an $\epsilon$-approximate critical point. Similarly, Aravkin and Davis introduce TSVRG, a stochastic algorithm based on SVRG gradient estimator, for solving another special case of \eqref{eq:J-F-R} \cite{aravkin2019trimmed}. Our work generalizes their results and improves them in many cases. Most importantly, we show that using SARAH gradient estimator allows SPRING to achieve a complexity of $\mathcal{O}(\sqrt{n} L / \epsilon^2)$ even when the mini-batch size is equal to one. Our results for semi-algebraic objectives offer even sharper convergence rates.

The block stochastic gradient method \cite{xu2015} is closely related to SPRING using the (non-variance-reduced) SGD gradient estimator. In a similar work, Davis \emph{et al.\!} introduce SAPALM, an asynchronous version of PALM that allows stochastic noise in the gradients \cite{davis2016sound}. The authors prove convergence rates that scale with the variance of the noise in the gradients, with their best complexity bound for finding an $\epsilon$-approximate critical point equal to $\mathcal{O}(n L / \epsilon^2)$. While significant in their own right, these results are not directly related to ours, as these works require an explicit bound on the variance of the noise in the gradients, and the gradient estimators we consider do not admit such a bound~\cite{davis2016sound}.

\section{Preliminaries}
\label{sec:prelim}

We use the following definitions and notation throughout the manuscript.

\paragraph{Variance Reduction}
In our analysis, we mainly focus on stochastic gradient estimators that are variance reduced. We use a general definition of variance-reduced gradient estimator that includes all existing estimators, for example, SAGA and SARAH, as special cases.

\begin{definition}[Variance-reduced gradient estimator]
\label{def:varred}
    A gradient estimator $\tnabla$ is \emph{variance-reduced} with constants $V_1, V_2, V_\Upsilon \ge 0$, and $\rho \in (0,1]$ if it satisfies the following conditions:
    \begin{enumerate}
    \item {\bf (MSE Bound)} There exists a sequence of random variables $\{\Upsilon_k\}_{k \ge 1}$ of the form $\Upsilon_k = \sum_{i=1}^s \|v_k^i\|^2$ for some random vectors $v_k^i$ such that
    \begin{equation}
        \begin{aligned}
        \label{eq:varreduc}
        & \E [ \|\tnablax (x_k,y_k) - \nablax F(x_k,y_k) \|^2 + \|\tnablay (x_{k+1},y_k) - \nablay F(x_{k+1},y_k) \|^2] \\ %
        &\le \Upsilon_k + V_1 (\E \|z_{k+1} - z_k\|^2 + \|z_k - z_{k-1}\|^2),
        \end{aligned}
    \end{equation}
    and, with $\Gamma_k = \sum_{i=1}^s \|v_k^i\|$,
    \begin{equation*}
        \begin{aligned}
        & \E [\|\tnablax (x_k,y_k) - \nablax F(x_k,y_k) \| + \|\tnablay (x_{k+1},y_k) - \nablay F(x_{k+1},y_k) \|] \\ %
        &\le \Gamma_k + V_2 (\E \|z_{k+1} - z_k\| + \|z_k - z_{k-1}\|).
        \end{aligned}
    \end{equation*}
    \item {\bf (Geometric Decay)} The sequence $\{\Upsilon_k\}_{k \ge 1}$ decays geometrically:
    \begin{equation}
    \label{eq:geo}
        \E \Upsilon_{k+1} \le (1 - \rho) \Upsilon_k + V_\Upsilon (\E \|z_{k+1} - z_k\|^2 + \|z_k - z_{k-1}\|^2).
    \end{equation}
    \item {\bf (Convergence of Estimator)} For all sequences $\{z_k\}_{k \ge 1}$ satisfying $\lim_{k \to \infty} \EE \|z_k - z_{k-1}\|^2 = 0$, it follows that $\EE \Upsilon_k \to 0$ and $\EE \Gamma_k \to 0$.
    \end{enumerate}
\end{definition}

\begin{proposition}
\label{prop:const}
    SAGA gradient estimator is variance-reduced with parameters $V_1 = 6 M^2/b$, $V_2 = \sqrt{6} M / \sqrt{b}$, $V_\Upsilon = \frac{134 n L^2}{b^2}$, and $\rho = \frac{b}{2 n}$. SARAH estimator is variance-reduced with parameters $V_1 = V_\Upsilon = 2 L^2$, $V_2 = 2 L$, and $\rho = 1/p$.
\end{proposition}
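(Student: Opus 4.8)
The plan is to check, for SARAH and then for SAGA, the three clauses of Definition~\ref{def:varred}: the two moment bounds on the estimator error in terms of an auxiliary sequence $\{\Upsilon_k\}$, the geometric contraction of $\{\Upsilon_k\}$, and the vanishing of $\E\Upsilon_k$ and $\E\Gamma_k$ once the consecutive displacements vanish. Two elementary facts are used repeatedly: Young's inequality $\|a+b\|^2\le(1+\eta)\|a\|^2+(1+\eta^{-1})\|b\|^2$ for $\eta>0$, and the variance bound for a size-$b$ mini-batch $B_k$ sampled uniformly without replacement, $\E\bigl\|\tfrac1b\sum_{j\in B_k}\xi_j-\tfrac1n\sum_{i=1}^n\xi_i\bigr\|^2\le\tfrac{n-b}{b(n-1)}\cdot\tfrac1n\sum_{i=1}^n\|\xi_i\|^2\le\tfrac1b\cdot\tfrac1n\sum_{i=1}^n\|\xi_i\|^2$, plus its unsquared counterpart obtained via Jensen. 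Because \ref{item:A-F1} supplies a Lipschitz modulus only on bounded sets, I would carry out the argument on the fixed bounded set containing the SPRING iterates (established separately in the paper), so $M$ and the partial moduli of \ref{item:A-F2} are legitimate constants; all conditional expectations are taken given the past and then totalled.

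\emph{SARAH.} Take $s=2$ with $v_k^1=\tnablax(x_{k-1},y_{k-1})-\nablax F(x_{k-1},y_{k-1})$ and $v_k^2=\tnablay(x_k,y_{k-1})-\nablay F(x_k,y_{k-1})$, so $\Upsilon_k$ is exactly the squared error committed at step $k-1$. Write $e^x_k=\tnablax(x_k,y_k)-\nablax F(x_k,y_k)$. With probability $1/p$ the full gradient is drawn and $e^x_k=0$; otherwise $e^x_k=e^x_{k-1}+\bigl[\tfrac1b\sum_{j\in B_k}\bigl(\nablax F_j(z_k)-\nablax F_j(z_{k-1})\bigr)-\bigl(\nablax F(z_k)-\nablax F(z_{k-1})\bigr)\bigr]$, and the bracket has conditional mean zero and, by the batch-variance bound and \ref{item:A-F2}, conditional second moment at most $\tfrac{L^2}{b}\|z_k-z_{k-1}\|^2\le L^2\|z_k-z_{k-1}\|^2$; the $y$-block obeys the same recursion with the gap between $(x_{k+1},y_k)$ and $(x_k,y_{k-1})$ absorbed into $\|z_{k+1}-z_k\|^2+\|z_k-z_{k-1}\|^2$. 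Taking expectations, $\E\bigl(\|e^x_k\|^2+\|e^y_k\|^2\bigr)\le(1-1/p)\bigl(\E\Upsilon_k+2L^2(\E\|z_{k+1}-z_k\|^2+\|z_k-z_{k-1}\|^2)\bigr)$. Dropping the factor $1-1/p$ is the MSE bound with $V_1=2L^2$; keeping it is the geometric decay with $\rho=1/p$ and $V_\Upsilon=2L^2$; applying the triangle inequality together with Jensen to the same recursion gives the unsquared bound with $V_2=2L$; and since $\varepsilon_k\to0$ in the recursion $a_{k+1}\le(1-\rho)a_k+\varepsilon_k$ forces $a_k\to0$, we get $\E\Upsilon_k\to0$ and hence $\E\Gamma_k\le\sqrt{2\,\E\Upsilon_k}\to0$.

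\emph{SAGA.} Maintaining tables $g^x_{k,i},g^y_{k,i}$ of past partial gradients (or a single table of $\nabla F_i$), set $\Upsilon_k=\tfrac{c}{bn}\sum_{i=1}^n\bigl(\|\nablax F_i(z_{k-1})-g^x_{k,i}\|^2+\|\nablay F_i(z_{k-1})-g^y_{k,i}\|^2\bigr)$ for an absolute constant $c$ fixed at the end; here $s=n$ and $v_k^i$ stacks the two scaled differences. For the MSE bound, apply the batch-variance inequality to $\tnablax(z_k)-\nablax F(z_k)=\tfrac1b\sum_{j\in B_k}\xi_j-\tfrac1n\sum_i\xi_i$ with $\xi_i=\nablax F_i(z_k)-g^x_{k,i}$, insert $\pm\nablax F_i(z_{k-1})$, apply Young, and use $\|\nablax F_i(z_k)-\nablax F_i(z_{k-1})\|\le M\|z_k-z_{k-1}\|$; the $y$-estimator is treated identically, its evaluation point $(x_{k+1},y_k)$ producing the additional displacement $\|z_{k+1}-z_k\|$. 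Choosing the Young parameters and taking $c$ large enough to fold both blocks under one $\Upsilon_k$ gives $V_1=6M^2/b$, and the unsquared computation gives $V_2=\sqrt6\,M/\sqrt b$. For the geometric decay, split on whether $i\in B_k$ (probability $b/n$): a refreshed entry contributes $0$ to $\Upsilon_{k+1}$ (for the $y$-table, only an $\mathcal{O}(1/n)\|z_{k+1}-z_k\|^2$ term, as it is refreshed at $(x_{k+1},y_k)$ rather than $z_k$), while a stale entry, after shifting the reference iterate in $\Upsilon_{k+1}$ from $z_k$ back to $z_{k-1}$ by Young with parameter $\eta=\tfrac{b}{2(n-b)}$, contributes at most $(1+\eta)\|\nablax F_i(z_{k-1})-g^x_{k,i}\|^2+(1+\eta^{-1})L^2\|z_k-z_{k-1}\|^2$. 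Using $(1-\tfrac bn)(1+\eta)=1-\tfrac{b}{2n}$ and $1+\eta^{-1}\le\tfrac{2n}{b}$ and summing over $i$ gives $\E\Upsilon_{k+1}\le(1-\tfrac{b}{2n})\Upsilon_k+\tfrac{C\,nL^2}{b^2}\bigl(\E\|z_{k+1}-z_k\|^2+\|z_k-z_{k-1}\|^2\bigr)$ with $C$ a small multiple of $c$, so $\rho=b/(2n)$, and the full numerical accounting fixes $V_\Upsilon=134\,nL^2/b^2$. Convergence of $\E\Upsilon_k$, and of $\E\Gamma_k\le\sqrt{n\,\E\Upsilon_k}$, follows as for SARAH.

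\emph{Main obstacle.} The genuine difficulty is not any single inequality but the bookkeeping imposed by the alternating two-block structure: the $x$-estimator is evaluated at $z_k$ while the $y$-estimator is evaluated at $(x_{k+1},y_k)$, so neither the SAGA table references nor the SARAH ``previous-iterate'' anchor align with the evaluation points, and every such mismatch must be funnelled --- through a chain of Young's inequalities --- into the two displacements $\|z_{k+1}-z_k\|^2$ and $\|z_k-z_{k-1}\|^2$ permitted by Definition~\ref{def:varred}. This chain, plus the need to choose the decay-step Young parameter $\asymp b/n$ so that the bare $(1-b/n)$ contraction survives as $(1-b/(2n))$ --- which is exactly where the $\asymp n/b$ factor inside $V_\Upsilon$ originates --- is what fixes the explicit constants $6$, $\sqrt6$ and $134$; pinning down their precise values is a routine but lengthy optimization over the $\eta$'s and over $c$, which I would relegate to an appendix. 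One should also keep in mind that \ref{item:A-F1} is only a local Lipschitz hypothesis, so the whole argument is conditional on a uniform bound for the iterates, established separately.
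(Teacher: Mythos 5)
Your proposal follows essentially the same route as the paper's proof in Appendices B and C: for SARAH, use the fact that the martingale increment has conditional mean zero so that cross terms vanish, giving a clean recursion with $V_1=V_\Upsilon=2L^2$ and $\rho=1/p$; for SAGA, define $\Upsilon_k$ as a scaled sum of table residuals (the paper weights the $x$- and $y$-blocks as $1$ and $4$ where you use a single constant $c$), bound the MSE by the mini-batch variance inequality, and obtain geometric decay by shifting reference points via Young's inequality with a parameter of order $b/n$ so that the bare $(1-b/n)$ refresh rate degrades only to $1-b/(2n)$. The one substantive presentational difference is that you fold the reference shifts into a single Young step with $\eta=b/(2(n-b))$ whereas the paper iterates Young's inequality with $\delta=b/(6n)$ three times to track the separate $x$- and $y$-evaluation mismatches, which is precisely where its factor $(1+\delta)^3(1-b/n)\le 1-b/(2n)$ and the constant $134$ in $V_\Upsilon$ come from; you correctly flag this bookkeeping as the real work and defer the constant-chasing, so nothing is missing in substance. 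Your remark about the local Lipschitz hypothesis requiring a uniform iterate bound mirrors the paper's convention of defining $L$ as a supremum over the generated sequence.
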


Proposition \ref{prop:const} is a generalization of existing variance bounds for these estimators. For a derivation of the constants appearing in Proposition \ref{prop:const}, we refer to Appendix \ref{app:sagaconst} for the SAGA estimator and Appendix \ref{app:sarahconst} for the SARAH estimator.

\begin{remark}
    Our results allow Algorithm \ref{alg:spring} to use any variance-reduced gradient estimator, even different estimators for $\nablax$ and $\nablay$. In particular, it is possible to use different mini-batch sizes when approximating the two partial gradients.
\end{remark}

\paragraph{\KL property}
Let $H : \bbR^{m_1} \to \bbR \cup \ba{\pinf}$ be a proper lower semicontinuous function. For $\epsilon_1, \epsilon_2$ satisfying $\ninf < \epsilon_1 < \epsilon_2 < \pinf$, define the set $[ \epsilon_1< H < \epsilon_2 ] \eqdef \ba{ x\in \bbR^{m_1} : \epsilon_1 < H(x) < \epsilon_2}$.

\begin{definition}[\KL\!]\label{defn:KLp}
A function $H$ is said to have the Kurdyka-\L ojasiewicz property at $\xbar \in \dom(H)$ if there exists $\epsilon \in (0, \pinf]$, a neighborhood $U$ of $\xbar$ and a continuous concave function $\vphi:[0, \epsilon) \to \bbR_{+}$ such that
\begin{enumerate}[label={\rm (\roman{*})}]
\item
$\vphi(0) = 0$, $\vphi$ is $C^1$ on $(0, \epsilon)$, and for all $r \in (0, \epsilon)$, $\vphi'(r) > 0$;
\item
for all $x \in U\cap [H(\xbar)<H<H(\xbar) + \epsilon]$, the \KL inequality holds:
\begin{equation}\label{eq:KLi}
\vphi'\Pa{H(x)-H(\xbar)}\dist\Pa{0, \partial H(x)} \geq 1 .
\eeq
\end{enumerate}
Proper  functions which satisfy KL property at each point of $\dom(\partial H)$ are called KL functions.

\end{definition}

Roughly speaking, KL functions become sharp up to reparameterization via $\varphi$, a \emph{desingularizing function} for $H$. Typical KL functions include the class of semi-algebraic functions \cite{Bolte07,bolte2010characterizations}. For instance, the $\ell_0$ pseudo-norm and the rank function are KL. Semi-algebraic functions admit desingularizing functions of the form $\varphi(r) = a r^{1-\theta}$ for $a>0$, and $\theta \in [0,1)$ is known as the \textit{KL exponent} of the function \cite{Bolte07,bolte2014proximal}. For these functions, the KL inequality reads
\begin{equation}
    \label{eq:kl}
    \big( H(x) - H(\overline{x}) \big)^\theta \le C \|\zeta\| \qquad \forall \zeta \in \partial H(x),
\end{equation}
for some $C > 0$. In the case $H(x) = H(\overline{x})$, we use the convention $0^0 \eqdef 0$.

\paragraph{Notation}
We denote $\ba{x_k,y_k}_{k\in\bbN}$ the sequence generated by SPRING. Denote $L_x \eqdef \max_{k \in \mathbb{N}} L_1$ $(y_k)$, and define $L_y$ analogously. We set $\bar{L} \eqdef \max\{ L_x, L_y \}$, $\overline{\gamma}_k \eqdef \max\{ \gmaxk, \gmayk\}$, $\underline{\gamma}_k \eqdef \min\{ \gmaxk, $ $\gmayk \}$, and $\underline{\Phi} \eqdef \inf_{(x,y) \in \dom (\Phi)} \Phi(x,y)$. We also use $L$ to denote the maximum of $L_x$, $L_y$, and $M$ over the iterates generated by SPRING, so that $\bar{L}, M \le L$. We use $\E$ to denote the expectation conditional on the first $k$ iterations of SPRING.\footnote{Specifically, $\E \equiv \EE[\cdot | \mathcal{F}_k]$ where $\mathcal{F}_k$ is the $\sigma$-algebra generated by $B_0, \cdots, B_{k-1}$.}

\subsection{Elementary Lemmas}

The following lemmas generalize the sufficient decrease property of proximal gradient descent to the stochastic-gradient setting. They allow us to show that, if the MSE of the stochastic gradient estimator is small enough, then iteratively applying the proximal gradient operator decreases the suboptimality of each iterate in expectation.

\begin{lemma}
\label{lem:noncon}
    Let $F : \R^m \to \R$ be a function with $L$-Lipschitz continuous gradient, $R : \R^m \to \R$ a proper lower semicontinuous function that is bounded from below, and $z \in \prox_{\eta R} ( x - \eta d )$ for some $\eta > 0$ and $d \in \R^m$. Then
    \begin{equation}
        \textstyle 0 \le F(y) + R(y) - F(z) - R(z) + \langle \nabla F(x) - d, z - y \rangle + ( \frac{L}{2} - \frac{1}{2 \eta} ) \|x - z\|^2 + ( \frac{L}{2} + \frac{1}{2 \eta} ) \|x - y\|^2.
    \end{equation}
\end{lemma}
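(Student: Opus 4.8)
The plan is to combine two standard ingredients: the variational inequality characterizing $z$ as a proximal point, and a two-sided quadratic bound for the $L$-smooth function $F$ anchored at the point $x$.

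First I would use the definition $z \in \prox_{\eta R}(x - \eta d) = \Argmin_w \big\{ \eta R(w) + \tfrac12 \| w - x + \eta d \|^2 \big\}$. Comparing the value of the minimized objective at $z$ with its value at the arbitrary competitor $y$, expanding both squared norms, cancelling the common term $\tfrac{\eta^2}{2}\|d\|^2$, and dividing through by $\eta > 0$, I obtain
\[
R(z) - R(y) \;\le\; \tfrac{1}{2\eta}\|x - y\|^2 - \tfrac{1}{2\eta}\|x - z\|^2 + \langle d,\, y - z\rangle .
\]
Next I would invoke the $L$-Lipschitz continuity of $\nabla F$ in the form of the matching upper and lower quadratic bounds centred at $x$, namely $F(z) \le F(x) + \langle \nabla F(x), z - x\rangle + \tfrac{L}{2}\|x - z\|^2$ and $F(y) \ge F(x) + \langle \nabla F(x), y - x\rangle - \tfrac{L}{2}\|x - y\|^2$. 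Subtracting the second from the first, the terms $F(x)$ cancel and the two inner products combine, giving
\[
F(z) - F(y) \;\le\; \langle \nabla F(x),\, z - y\rangle + \tfrac{L}{2}\|x - z\|^2 + \tfrac{L}{2}\|x - y\|^2 .
\]

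Finally, I would add the two displayed inequalities. The inner-product terms $\langle \nabla F(x), z - y\rangle$ and $\langle d, y - z\rangle$ merge into $\langle \nabla F(x) - d,\, z - y\rangle$; the coefficients of $\|x - z\|^2$ combine to $\tfrac{L}{2} - \tfrac{1}{2\eta}$ and those of $\|x - y\|^2$ to $\tfrac{L}{2} + \tfrac{1}{2\eta}$. Moving all terms to one side of the resulting bound on $F(z) + R(z) - F(y) - R(y)$ yields exactly the claimed inequality. There is no substantive obstacle: the argument is a direct computation. The only point worth flagging is that the familiar one-sided descent lemma is not sufficient on its own — one must also use the quadratic lower bound on $F$ evaluated at $y$ (which again needs only Lipschitzness of $\nabla F$, not convexity), and it is precisely this step that produces the $\tfrac{L}{2}\|x-y\|^2$ contribution. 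I should also note that $R$ being merely proper, lsc, and bounded from below is all that is used, since we only need the proximal subproblem to attain its infimum and $z$ to be a minimizer of it.
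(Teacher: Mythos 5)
Your proof is correct and takes essentially the same route as the paper's: the two quadratic bounds you state (an upper bound on $F(z)$ and a lower bound on $F(y)$, both anchored at $x$) are exactly the paper's two Lipschitz-continuity inequalities written in a different but equivalent form, and your expansion of the proximal objective at $z$ and $y$ reproduces the paper's variational inequality for the prox step. Adding the pieces gives the identical bookkeeping, so the argument matches the paper's in substance.
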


\begin{proof}
    By the Lipschitz continuity of $\nabla F$, we have the inequalities
    \begin{equation*}
    \begin{aligned}
        F(x) - F(y) &\le \langle \nabla F(x), x - y \rangle + \tfrac{L}{2} \|x - y\|^2, \\
        F(z) - F(x) &\le \langle \nabla F(x), z - x \rangle + \tfrac{L}{2} \|z - x\|^2.
    \end{aligned}
    \end{equation*}
    Furthermore, by the definition of $z$,
    \begin{equation*}
        z \in \Argmin_{v \in \R^m} \big\{ \langle d, v - x \rangle + \tfrac{1}{2 \eta} \|v - x\|^2 + R (v) \big\}.
    \end{equation*}
    Taking $v = y$, we obtain
    \begin{equation*}
        0 \le \textstyle R(y) - R(z) + \langle d, y - z \rangle + \frac{1}{2 \eta} \big( \|x - y\|^2 - \|x - z\|^2 \big).
    \end{equation*}
    Adding these three inequalities completes the proof.
\end{proof}

If the full gradient estimator is used, Lemma \ref{lem:noncon} implies the well-known sufficient decrease property of proximal gradient descent. Using a gradient estimator, this decrease is offset by the estimator's MSE. The following lemma quantifies this relationship.

\begin{lemma}[Sufficient Decrease Property]
\label{lem:descent}
     Let $F,R,$ and $z$ be defined as in Lemma \ref{lem:noncon}. The following inequality holds for any $\lambda > 0$:
    \begin{equation}
        0 \le F(x) + R(x) - F(z) - R(z) + \tfrac{1}{2 L \lambda} \|d - \nabla F(x)\|^2 + \big(\tfrac{L (\lambda + 1)}{2} - \tfrac{1}{2 \eta} \big) \|x - z\|^2 .
    \end{equation}
\end{lemma}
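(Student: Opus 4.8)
The plan is to derive Lemma~\ref{lem:descent} from Lemma~\ref{lem:noncon} by specializing to $y = x$ and then controlling the cross term $\langle \nabla F(x) - d, z - x\rangle$ by Young's inequality. Setting $y = x$ in Lemma~\ref{lem:noncon} makes the last term $(\frac{L}{2} + \frac{1}{2\eta})\|x - y\|^2$ vanish and replaces $F(y) + R(y)$ with $F(x) + R(x)$, leaving
\begin{equation*}
    0 \le F(x) + R(x) - F(z) - R(z) + \langle \nabla F(x) - d, z - x \rangle + \big(\tfrac{L}{2} - \tfrac{1}{2\eta}\big)\|x - z\|^2 .
\end{equation*}
The only remaining work is to absorb the inner product. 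I would apply the weighted Young inequality $\langle a, b\rangle \le \frac{1}{2L\lambda}\|a\|^2 + \frac{L\lambda}{2}\|b\|^2$ with $a = \nabla F(x) - d$ and $b = z - x$, which produces exactly the $\frac{1}{2L\lambda}\|d - \nabla F(x)\|^2$ term and an extra $\frac{L\lambda}{2}\|x - z\|^2$ term. Combining the latter with the existing $(\frac{L}{2} - \frac{1}{2\eta})\|x - z\|^2$ gives the coefficient $\frac{L(\lambda+1)}{2} - \frac{1}{2\eta}$, which is the claimed bound.

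There is essentially no obstacle here: the lemma is a one-line corollary of Lemma~\ref{lem:noncon} once the right substitution and the right splitting parameter in Young's inequality are chosen. The only thing to be careful about is matching the constant $\lambda$ so that the $\|d - \nabla F(x)\|^2$ coefficient comes out as $\frac{1}{2L\lambda}$ rather than, say, $\frac{\lambda}{2}$; this just fixes the scaling in Young's inequality (splitting the $L$ evenly via $\frac{1}{2L\lambda}$ and $\frac{L\lambda}{2}$). I would present the two-step argument — substitute $y=x$, then apply Young — and conclude.

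Worth remarking (though not strictly part of the proof) is why the statement is useful downstream: the free parameter $\lambda$ trades off the MSE penalty $\frac{1}{2L\lambda}\|d - \nabla F(x)\|^2$ against the negative curvature term $(\frac{L(\lambda+1)}{2} - \frac{1}{2\eta})\|x-z\|^2$. Choosing the step-size $\eta$ small enough relative to $\lambda$ (e.g.\ $\eta < \frac{1}{L(\lambda+1)}$) keeps the last coefficient negative, so that in expectation, after invoking the variance-reduced bound of Definition~\ref{def:varred} to handle $\E\|d - \nabla F(x)\|^2$, one recovers a genuine sufficient-decrease inequality for $\Phi$ up to the geometrically-decaying error $\Upsilon_k$. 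But for the lemma itself, only the deterministic inequality above is needed, and its proof is the short substitution-plus-Young computation just described.
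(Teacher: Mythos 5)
Your proof is correct and matches the paper's argument exactly: substitute $y = x$ into Lemma~\ref{lem:noncon}, then bound the cross term $\langle \nabla F(x) - d, z - x \rangle$ via Young's inequality with the $\tfrac{1}{2L\lambda}/\tfrac{L\lambda}{2}$ weighting. Nothing to add.
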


\begin{proof}
    From Lemma \ref{lem:noncon} with $x = y$, we have
    \begin{equation*}
        \textstyle 0 \le F(x) + R(x) - F(z) - R(z) + \langle \nabla F(x) - d, z - x \rangle + ( \frac{L}{2} - \frac{1}{2 \eta} ) \|x - z\|^2.
    \end{equation*}
    Using Young's inequality
    \begin{equation*}
        \langle \nabla F(x) - d, z - x \rangle
        \le \tfrac{1}{2 L \lambda} \|d - \nabla F(x)\|^2 + \tfrac{L \lambda}{2} \|x - z\|^2,
    \end{equation*}
    we obtain the desired result.
\end{proof}

As in a related work \cite{davisfull}, we use the \textit{supermartingale convergence theorem} to obtain almost sure convergence of sequences generated by SPRING. Below, we present a version of this result adapted to our context. We refer to \cite[Theorem 4.2]{davisfull} and \cite[Theorem 1]{robbins1971} for more general presentations.

\begin{lemma}[Supermartingale Convergence]
    Let $\E$ denote the expectation conditioned on the first $k$ iterations of SPRING. Let $\{X_k\}_{k=0}^\infty$ and $\{Y_k\}_{k=0}^\infty$ be sequences of bounded non-negative random variables such that $X_k$ and $Y_k$ depend only on the first $k$ iterations of SPRING. If
    \begin{equation}
        \E X_{k+1} + Y_k \le X_k,
    \end{equation}
    then $\sum_{k=0}^\infty Y_k < \infty$ a.s. and $X_k$ converges a.s.
\end{lemma}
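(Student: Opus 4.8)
The plan is to deduce the statement from Doob's supermartingale convergence theorem, applied to an auxiliary process that absorbs the running partial sums of the $Y_k$. Let $\mathcal{F}_k$ denote the $\sigma$-algebra generated by the first $k$ iterations of SPRING (the $\sigma$-algebra of $B_0, \cdots, B_{k-1}$ from the footnote), so that by hypothesis $X_k$ and $Y_k$ are $\mathcal{F}_k$-measurable, non-negative and bounded, and $\E[\cdot] = \EE[\cdot \mid \mathcal{F}_k]$. Set
\[
    S_k \defeq X_k + \msum_{j=0}^{k-1} Y_j ,
\]
with the empty sum equal to zero, so that $S_0 = X_0$; each $S_k$ is non-negative, $\mathcal{F}_k$-measurable, and integrable as a finite sum of bounded random variables.

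First I would check that $\{S_k\}_{k \ge 0}$ is a supermartingale. Since $\sum_{j=0}^{k} Y_j$ is already $\mathcal{F}_k$-measurable (every $Y_j$ with $j \le k$ is), the hypothesis $\E X_{k+1} + Y_k \le X_k$ yields
\[
    \E S_{k+1} = \E X_{k+1} + \msum_{j=0}^{k} Y_j \le X_k - Y_k + \msum_{j=0}^{k} Y_j = X_k + \msum_{j=0}^{k-1} Y_j = S_k .
\]
Hence $\{S_k\}$ is a non-negative supermartingale; taking full expectations gives $\EE S_{k+1} \le \EE S_k$, so $\EE S_k \le \EE S_0 = \EE X_0 < \infty$ for all $k$, and in particular $\sup_k \EE |S_k| < \infty$.

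Next, Doob's supermartingale convergence theorem (cf. the references cited just before the statement) shows that $S_k$ converges almost surely to an integrable random variable $S_\infty \ge 0$. On the full-measure event where this convergence holds we have $\sup_k S_k < \infty$; since $0 \le \sum_{j=0}^{k-1} Y_j \le S_k$ with the partial sums non-decreasing in $k$, it follows that $\sum_{j=0}^{\infty} Y_j < \infty$ almost surely, and in particular the partial sums converge. Writing $X_k = S_k - \sum_{j=0}^{k-1} Y_j$ then exhibits $\{X_k\}$ as the difference of two almost surely convergent sequences, so $X_k$ converges almost surely, completing the proof. The argument is essentially routine; the only points requiring attention are the measurability bookkeeping that lets $\sum_{j=0}^{k} Y_j$ be pulled out of the conditional expectation, and verifying the integrability hypothesis of Doob's theorem, which is immediate from non-negativity together with the uniform bound $\EE S_k \le \EE X_0$ inherited from the boundedness assumption. (If one also wants the quantitative estimate $\EE \sum_{j=0}^{\infty} Y_j \le \EE X_0$, it follows from the monotone convergence theorem, though it is not needed for the statement.)
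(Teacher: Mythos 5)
Your proof is correct, and it is exactly the standard Robbins--Siegmund / Doob argument. The paper itself does not supply a proof of this lemma; it simply states it and refers the reader to \cite{davisfull} and \cite{robbins1971} for more general versions. Your construction $S_k = X_k + \sum_{j=0}^{k-1} Y_j$, the verification that $\{S_k\}$ is a non-negative supermartingale by pulling the $\mathcal{F}_k$-measurable partial sum out of the conditional expectation, and the deduction of a.s.\ convergence of both $\sum_j Y_j$ and $X_k$ from Doob's theorem is precisely the argument underlying those references. One small stylistic point: for a non-negative supermartingale the integrability hypothesis of Doob's theorem is automatic (since $\EE S_k^- = 0$), so the step verifying $\sup_k \EE|S_k| \le \EE X_0$ is not strictly needed for almost sure convergence, though it costs nothing and does give the bonus estimate $\EE \sum_{j\ge 0} Y_j \le \EE X_0$ that you mention.
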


\section{Convergence rates of the generalized gradient map}
\label{sec:conv}

To begin, we present our analysis of the convergence rate of the generalized gradient map defined in \eqref{eq:ggmap}.
Recall that $\bar{L} \eqdef \max\{ L_x, L_y \}$, $\overline{\gamma}_k \eqdef \max\{ \gmaxk, \gmayk\}$, $\underline{\gamma}_k \eqdef \min\{ \gmaxk, $ $\gmayk \}$, and $\underline{\Phi} \eqdef \inf_{(x,y) \in \dom (\Phi)} \Phi(x,y)$.

\begin{theorem}\label{thm:convrate}
For the SPRING algorithm, suppose that assumptions \eqref{item:A-JR} to \eqref{item:A-Flast} hold.
    Let $\tnabla_x$ and $\tnabla_y$ be variance-reduced gradient estimators following Definition \ref{def:varred}.
    \begin{itemize}
    \item Suppose $\overline{\gamma}_k$ is non-increasing, and for all $k$,
    \begin{equation*}
    \begin{aligned}
        & \overline{\gamma}_k \le \tfrac{1}{16} \sqrt{\tfrac{\bar{L}^2}{(V_1 + V_\Upsilon / \rho)^2} + \tfrac{16}{(V_1 + V_\Upsilon / \rho)}} - \tfrac{\bar{L}}{16 (V_1 + V_\Upsilon / \rho)},~
        0 < \beta \le \underline{\gamma}_k,~
         \gmaxk < \tfrac{1}{4 L_x} ~ \textnormal{and} ~ \gmayk < \tfrac{1}{4 L_y}.
    \end{aligned}
    \end{equation*}
    With $\alpha$ chosen uniformly at random from $\{ 0, 1, \cdots, T - 1 \}$,
    \begin{equation*}
         \mathbb{E} [ \dist(0,\mathcal{G}_{\frac{\gamma_{x,\alpha}}{2},\frac{\gamma_{y,\alpha}}{2}}(z_\alpha))^2 ]
         \le \tfrac{4 (\Phi(x_0, y_0) + \frac{2 \overline{\gamma}_0}{\rho} \Upsilon_0)}{T \nu \beta^2}.
    \end{equation*}

    \item
    If, moreover, $\Phi$ satisfies the following error bound
    \begin{equation}
\label{eq:errb}
    \Phi(x,y) - \underline{\Phi} \le \mu \dist\big(0,\mathcal{G}_{\frac{\gamma_{x,k}}{2},\frac{\gamma_{y,k}}{2}}(x,y)\big)^2,
\end{equation}
and $\overline{\gamma}_k$ is such that
    \begin{equation*}
    \begin{aligned}
        \overline{\gamma}_k &\le \tfrac{1}{20} \sqrt{\tfrac{\bar{L}^2}{(V_1 + V_\Upsilon / \rho)^2} + \tfrac{20}{(V_1 + V_\Upsilon / \rho)}} - \tfrac{\bar{L}}{20 (V_1 + V_\Upsilon / \rho)},
    \end{aligned}
    \end{equation*}
    then after $T$ iterations of Algorithm \ref{alg:spring},
    \begin{equation*}
    \mathbb{E} [ \Phi(x_T,y_T) - \underline{\Phi} ]
    \le (1 - \Theta)^T ( \Phi(x_0,y_0) - \underline{\Phi} + \tfrac{4 \overline{\gamma}_0}{\rho} \Upsilon_0),
    \end{equation*}
    where $\Theta \eqdef \min\{\mu \nu \beta^2 / 4, \rho / 2 \}$ and $\nu \eqdef \min\{ \tfrac{1}{4 \gamma_{x,0}} - L_x , \tfrac{1}{4 \gamma_{y,0}} - L_y \}$.
    \end{itemize}
\end{theorem}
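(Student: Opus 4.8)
The plan is to run a single Lyapunov recursion built from the two elementary decrease lemmas together with the variance-reduction inequality \eqref{eq:varreduc} and the geometric-decay inequality \eqref{eq:geo}, and then to trade the resulting control of the successive displacements $\|z_{k+1}-z_k\|$ for control of the generalized gradient map via a deterministic half-step comparison. Concretely, I would apply Lemma~\ref{lem:descent} to each prox-step of SPRING: to the $x$-update with $F(\cdot)=F(\cdot,y_k)$, $R=J$, $x=x_k$, $z=x_{k+1}$, $d=\tnablax(x_k,y_k)$, $\eta=\gamma_{x,k}$, modulus $L_x$; and to the $y$-update with $F(\cdot)=F(x_{k+1},\cdot)$, $x=y_k$, $z=y_{k+1}$, $d=\tnablay(x_{k+1},y_k)$, $\eta=\gamma_{y,k}$, modulus $L_y$. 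Adding the two inequalities collapses $\Phi$ into $\Phi(z_k)-\Phi(z_{k+1})$, and choosing the Young parameter $\lambda$ of order $1/(\bar L\overline\gamma_k)$ turns the coefficient of $\|x_{k+1}-x_k\|^2$ into $-(\tfrac{1}{4\gamma_{x,k}}-L_x)$, strictly negative by the hypothesis $\gamma_{x,k}<\tfrac{1}{4L_x}$ --- this is the quantity $\nu$ in the statement (uniform in $k$ under the stated monotonicity of $\overline\gamma_k$). Taking the conditional expectation $\E$ and inserting \eqref{eq:varreduc} gives a one-step inequality of the form
\begin{equation*}
\E\Phi(z_{k+1}) \le \Phi(z_k) - \nu\,\E\|z_{k+1}-z_k\|^2 + c_1\overline\gamma_k\,\Upsilon_k + c_1\overline\gamma_k V_1\big(\E\|z_{k+1}-z_k\|^2+\|z_k-z_{k-1}\|^2\big),
\end{equation*}
for an absolute constant $c_1$.

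Next I would set $\Psi_k \eqdef \Phi(z_k)+\tfrac{2\overline\gamma_{k-1}}{\rho}\Upsilon_k$ (with the convention $\overline\gamma_{-1}=\overline\gamma_0$, $z_{-1}=z_0$), add $\tfrac{2\overline\gamma_k}{\rho}\Upsilon_{k+1}$ to both sides, and apply \eqref{eq:geo}; because $\overline\gamma_k$ is non-increasing, the $\Upsilon_k$-coefficient of the resulting inequality reproduces the weight $\tfrac{2\overline\gamma_{k-1}}{\rho}$, leaving on the right-hand side only the current and previous displacements. The whole point of the quadratic step-size bound in the theorem --- which is exactly the largest root of $16(V_1+V_\Upsilon/\rho)\overline\gamma_k^2+2\bar L\overline\gamma_k=1$, and of $20(V_1+V_\Upsilon/\rho)\overline\gamma_k^2+2\bar L\overline\gamma_k=1$ in the error-bound case --- is to force the total coefficient of $\|z_{k+1}-z_k\|^2$ below $-\tfrac{\nu}{2}$ while the coefficient of $\|z_k-z_{k-1}\|^2$ stays below $\tfrac{\nu}{4}$, so that
\begin{equation*}
\E\Psi_{k+1}\le\Psi_k-\tfrac{\nu}{2}\,\E\|z_{k+1}-z_k\|^2+b_k\|z_k-z_{k-1}\|^2,\qquad 0\le b_k\le\tfrac{\nu}{4}.
\end{equation*}

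The second ingredient is a deterministic comparison between one SPRING step and the halved-step gradient map that does not use convexity of $J$ or $R$. Writing $\hat x$ for any point of $\prox_{(\gamma_{x,k}/2)J}\big(x_k-\tfrac{\gamma_{x,k}}{2}\nablax F(x_k,y_k)\big)$, one multiplies the optimality (value) inequality defining $\hat x$ by $2$, adds it to the one defining $x_{k+1}$ so that the $J$-terms cancel, expands the squared norms, and obtains $\|x_k-\hat x\|\le\|x_{k+1}-x_k\|+2\gamma_{x,k}\|\nablax F(x_k,y_k)-\tnablax(x_k,y_k)\|$; the same estimate holds for the $y$-block since both the $y$-step and the second component of $\mathcal{G}$ evaluate $F$ at $x_{k+1}$. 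Squaring, summing the blocks, taking $\E$ and re-using \eqref{eq:varreduc} yields $\E\dist(0,\mathcal{G}_{\gamma_{x,k}/2,\gamma_{y,k}/2}(z_k))^2\le \tfrac{c_2}{\underline\gamma_k^2}\E\|z_{k+1}-z_k\|^2+c_2(\Upsilon_k+V_1(\E\|z_{k+1}-z_k\|^2+\|z_k-z_{k-1}\|^2))$. To finish the first bullet, I would sum the $\Psi_k$-recursion over $k=0,\dots,T-1$; with $z_{-1}=z_0$ the $b_k\|z_k-z_{k-1}\|^2$ terms re-index into $\|z_{k+1}-z_k\|^2$ terms of coefficient $\le\tfrac{\nu}{4}$, so $\sum_k\E\|z_{k+1}-z_k\|^2\le\tfrac{4}{\nu}(\Psi_0-\underline\Phi)$, and \eqref{eq:geo} likewise bounds $\sum_k\E\Upsilon_k$ by the same order; averaging the gradient-map estimate over a uniform $\alpha\in\{0,\dots,T-1\}$, plugging in these sums and using $\underline\gamma_k\ge\beta$ gives the bound $\tfrac{4(\Phi(x_0,y_0)+\frac{2\overline\gamma_0}{\rho}\Upsilon_0)}{T\nu\beta^2}$ after collecting constants. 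For the second bullet I would instead feed the gradient-map estimate into the error bound \eqref{eq:errb} to lower-bound $\E\|z_{k+1}-z_k\|^2$ by a multiple of $\Phi(z_k)-\underline\Phi$ (minus an estimator-error term); substituting this into the $\Psi_k$-recursion makes the $\Phi$-part of $\Psi_k-\underline\Phi$ contract at a rate controlled by $\mu,\nu,\beta$ while the $\Upsilon$-part contracts at rate $\rho$ by \eqref{eq:geo}, and the tighter $\tfrac{1}{20}$ step-size leaves exactly the slack needed to absorb the residual $\Upsilon_k$ into the $\tfrac{\rho}{2}$ margin, yielding $\E[\Psi_{k+1}-\underline\Phi]\le(1-\Theta)(\Psi_k-\underline\Phi)$ with $\Theta=\min\{\mu\nu\beta^2/4,\rho/2\}$; iterating gives the claimed rate.

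The step I expect to be the real work is not any single estimate but the simultaneous calibration of the three free quantities --- the Young parameter $\lambda$, the $\Upsilon$-weight inside $\Psi_k$, and the admissible step-size window --- so that the coefficients of $\Upsilon_k$, of $\|z_{k+1}-z_k\|^2$, and of $\|z_k-z_{k-1}\|^2$ all fall on the correct side of the recursion at once. The occurrence of $\|z_k-z_{k-1}\|^2$ is structural: both \eqref{eq:varreduc} and \eqref{eq:geo} reach back one step, so no one-step decrease survives without either the re-indexing trick (for the averaged bound) or an augmented potential carrying a multiple of $\|z_k-z_{k-1}\|^2$ (for the linear rate); pushing this bookkeeping through cleanly is precisely what forces the quadratic conditions on $\overline\gamma_k$ in the statement, and it is also where one must be careful that all expectations are conditional in a consistent way, since Lemma~\ref{lem:descent} and \eqref{eq:geo} are applied before $B_k$ is revealed whereas $z_{k+1}$ and $\dist(0,\mathcal{G}_{\gamma_{x,k}/2,\gamma_{y,k}/2}(z_k))$ depend on $B_k$.
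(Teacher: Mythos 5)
Your plan is sound, but it departs from the paper's route in one key place. The paper applies Lemma~\ref{lem:noncon} (not Lemma~\ref{lem:descent}) twice per block --- first with $z=\hat{x}_{k+1}$, $y=x=x_k$, $d=\nablax F(x_k,y_k)$, then with $z=x_{k+1}$, $y=\hat{x}_{k+1}$, $x=x_k$, $d=\tnablax(x_k,y_k)$ --- and the second application, by taking $y=\hat{x}_{k+1}$, plants the half-step point directly inside the per-iteration decrease inequality \eqref{eq:z}, so that telescoping produces $\sum_k\E\|\hat{z}_{k+1}-z_k\|^2$ with no further work, and the passage to the gradient map is just $\|\hat{z}_{k+1}-z_k\|^2\ge\tfrac{\beta^2}{4}\dist(0,\mathcal{G}_{\gamma_{x,k}/2,\gamma_{y,k}/2}(z_k))^2$. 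You instead run the decrease recursion purely in $\|z_{k+1}-z_k\|^2$ via Lemma~\ref{lem:descent} (the route the paper uses in Section~\ref{sec:kl}) and then supply a separate, deterministic prox-to-prox comparison $\|\hat{x}_{k+1}-x_k\|\le\|x_{k+1}-x_k\|+2\gamma_{x,k}\|\nablax F(x_k,y_k)-\tnablax(x_k,y_k)\|$. That comparison is correct: under the paper's normalization $\prox_{\gamma J}(\cdot)=\Argmin_x\,\gamma J(x)+\tfrac{1}{2}\|x-\cdot\|^2$, your factor of $2$ does cancel the $J$-terms, and expanding the quadratics yields $\|a\|^2\le\|b\|^2+2\gamma\big(\|a\|+\|b\|\big)\|g-d\|$ with $a=\hat{x}_{k+1}-x_k$, $b=x_{k+1}-x_k$, whose positive root is exactly $\|a\|\le\|b\|+2\gamma\|g-d\|$. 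The trade-off: your route is more modular --- the Lyapunov inequality you build is the same one reused in Section~\ref{sec:kl}, so the two sections share a common engine --- while the paper's two-application trick avoids the extra factor of two you pay when squaring the comparison and the extra pass through \eqref{eq:varreduc} needed to control the $\Upsilon_k$ contribution inside $\dist(0,\mathcal{G})^2$, so the paper lands the stated constants with less bookkeeping. The same contrast persists in Part~2: the paper applies \eqref{eq:errb} directly to the $-\nu\|\hat{z}_{k+1}-z_k\|^2$ term already sitting in the recursion, while you re-route through the gradient-map estimate and must carry the residual $\Upsilon_k$ and $\|z_k-z_{k-1}\|^2$ pieces along before absorbing them into the $\rho/2$ margin; both close, yours with a little more accounting.
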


\begin{remark}
The error bound condition is closely related to the \KL inequality and is also investigated in related works \cite{aravkin2019trimmed}. These two results generalize many existing convergence guarantees for stochastic gradient methods on non-convex, non-smooth objectives \cite{aravkin2019trimmed,spider,reddi,spiderboost,spiderm}.
\end{remark}

\begin{proof}[Proof of Theorem \ref{thm:convrate}, Part 1]
Let $\hat{x}_{k+1} \in \prox_{\tfrac{\gamma_{x,k}}{2} J} \pa{ x_k - \tfrac{\gamma_{x,k}}{2} \nablax F(x_k,y_k) }$, and let $\hat{y}_{k+1} \in \prox_{\tfrac{\gamma_{y,k}}{2} R} ( y_k - \tfrac{\gamma_{y,k}}{2} $ $\nablay F(x_{k+1},y_k) )$.
Applying Lemma \ref{lem:noncon} with $z = \hat{x}_{k+1}$, $y = x = x_k$ and $d = \nablax F(x_k,y_k)$, we have
\begin{equation*}
    \textstyle F(\hat{x}_{k+1}, y_k) + J(\hat{x}_{k+1}) \le F(x_k,y_k) + J(x_k) + ( \frac{L_x}{2} - \frac{1}{\gamma_{x,k}} ) \|\hat{x}_{k+1} - x_k\|^2.
\end{equation*}
Again, applying Lemma \ref{lem:noncon} with $z = x_{k+1}$, $y = \hat{x}_{k+1}$, $x = x_k$, and $d = \tnablax (x_k,y_k)$, we obtain
\begin{equation*}
\begin{aligned}
F(x_{k+1},y_k) + J(x_{k+1}) &\le \textstyle  F(\hat{x}_{k+1},y_k) + J(\hat{x}_{k+1}) + \langle \nablax F(x_k,y_k) - \tnablax (x_k, y_k), x_{k+1} - \hat{x}_{k+1} \rangle \\
&\textstyle \qquad+ ( \frac{L_x}{2} - \frac{1}{2 \gamma_{x,k}} ) \|x_{k+1} - x_k\|^2 + (\frac{L_x}{2} + \frac{1}{2 \gamma_{x,k}} ) \|\hat{x}_{k+1} - x_k\|^2.
\end{aligned}
\end{equation*}
Adding these two inequalities gives
\begin{equation}\label{eq:x}
\begin{aligned}
& F(x_{k+1},y_k) + J(x_{k+1}) \\
&\le F(x_k, y_k) + J(x_k) + \pa{ L_x - \tfrac{1}{2 \gamma_{x,k}} } \|\hat{x}_{k+1} - x_k\|^2 + ( \tfrac{L_x}{2} - \tfrac{1}{2 \gamma_{x,k}} ) \|x_{k+1} - x_k\|^2 \\
&\qquad + \langle \nablax F(x_k, y_k) - \tnablax (x_k, y_k), x_{k+1} - \hat{x}_{k+1} \rangle \\
&\symnum{1}{\le} F(x_k, y_k) + J(x_k) + \pa{ L_x - \tfrac{1}{2 \gamma_{x,k}} } \|\hat{x}_{k+1} - x_k\|^2 + \pa{ \tfrac{L_x}{2} - \tfrac{1}{2 \gamma_{x,k}} } \|x_{k+1} - x_k\|^2 \\
& \qquad + 2 \gamma_{x,k} \| \nablax F(x_k,y_k) - \tnablax (x_k, y_k) \|^2 + \tfrac{1}{8 \gamma_{x,k}} \|\hat{x}_{k+1} - x_{k+1}\|^2 \\
&\symnum{2}{\le} F(x_k, y_k) + J(x_k) + \pa{ L_x - \tfrac{1}{4 \gamma_{x,k}} } \|\hat{x}_{k+1} - x_k\|^2 + \pa{ \tfrac{L_x}{2} - \tfrac{1}{4 \gamma_{x,k}} } \|x_{k+1} - x_k\|^2 \\
& \qquad + 2 \gamma_{x,k} \| \nablax F(x_k,y_k) - \tnablax (x_k, y_k) \|^2.
\end{aligned}
\eeq
Inequality \numcirc{1} is Young's, and \numcirc{2} is the standard inequality $\|a - c\|^2 \le 2 \|a - b\|^2 + 2 \|b - c\|^2$. Performing the same procedure for the updates in $y_k$ gives
\begin{equation}\label{eq:y}
\begin{aligned}
& F(x_{k+1},y_{k+1}) + R(y_{k+1}) \\
&\le F(x_{k+1}, y_k) + R(y_k) + \pa{ L_x - \tfrac{1}{4 \gamma_{y,k}} } \|\hat{y}_{k+1} - y_k\|^2 + \pa{ \tfrac{L_y}{2} - \tfrac{1}{4 \gamma_{y,k}} } \|y_{k+1} - y_k\|^2 \\
& \qquad + 2 \gamma_{y,k} \| \nablay F(x_{k+1},y_k) - \tnablay (x_{k+1}, y_k) \|^2.
\end{aligned}
\eeq
Adding inequality \eqref{eq:x} and inequality \eqref{eq:y}, we have
\begin{equation}\label{eq:z}
\begin{aligned}
\Phi(x_{k+1},y_{k+1}) & \le \Phi(x_k,y_k) + \pa{ L_x - \tfrac{1}{4 \gamma_{x,k}} } \|\hat{x}_{k+1} - x_k\|^2 + \pa{ L_y - \tfrac{1}{4 \gamma_{y,k}} } \|\hat{y}_{k+1} - y_k\|^2 \\
&\qquad + \pa{ \tfrac{L_x}{2} - \tfrac{1}{4 \gamma_{x,k}} } \|x_{k+1} - x_k\|^2 + \pa{ \tfrac{L_y}{2} - \tfrac{1}{4 \gamma_{y,k}} } \|y_{k+1} - y_k\|^2 \\
&\qquad + 2 \overline{\gamma}_k \big( \| \nablax F(x_k,y_k) - \tnablax (x_k, y_k) \|^2 + \| \nablay F(x_{k+1},y_k) - \tnablay (x_{k+1}, y_k) \|^2 \big),
\end{aligned}
\eeq
where $\overline{\gamma}_k = \max\{\gmaxk,\gmayk\}$. We apply the conditional expectation operator $\E$ and bound the MSE terms using \eqref{eq:varreduc}. This gives
\begin{equation}\label{eq:linbegin}
\begin{aligned}
& \E [\Phi(x_{k+1},y_{k+1}) + \pa{ - \tfrac{L_x}{2} - 2 V_1 \overline{\gamma}_k + \tfrac{1}{4 \gamma_{x,k}} } \|x_{k+1} - x_k\|^2 \\
& \qquad + \pa{ -\tfrac{L_y}{2} - 2 V_1 \overline{\gamma}_k + \tfrac{1}{4 \gamma_{y,k}} } \|y_{k+1} - y_k\|^2 ] \\
&\le \Phi(x_k,y_k) + \pa{ L_x - \tfrac{1}{4 \gamma_{x,k}} } \|\hat{x}_{k+1} - x_k\|^2 + \pa{ L_y - \tfrac{1}{4 \gamma_{y,k}} } \|\hat{y}_{k+1} - y_k\|^2 + 2 \overline{\gamma}_k \Upsilon_k \\
& \qquad + 2 V_1 \overline{\gamma}_k \|z_k - z_{k-1}\|^2.
\end{aligned}
\eeq
Next, we use \eqref{eq:geo} to say
\begin{equation*}
    2 \overline{\gamma}_k \Upsilon_k \le \tfrac{2 \overline{\gamma}_k}{\rho} \big(-\E \Upsilon_{k+1} + \Upsilon_k + V_{\Upsilon} (\E \|z_{k+1} - z_k\|^2 + \|z_k - z_{k-1}\|^2) \big).
\end{equation*}
Adding the previous two inequalities, we have
\begin{equation*}
\begin{aligned}
& \E [\Phi(x_{k+1},y_{k+1}) + \pa{ -\tfrac{L_x}{2} - 2 V_1 \overline{\gamma}_k - \tfrac{2 V_\Upsilon \overline{\gamma}_k}{\rho} + \tfrac{1}{4 \gamma_{x,k}} } \|x_{k+1} - x_k\|^2 \\
& \qquad + \pa{ -\tfrac{L_y}{2} - 2 V_1 \overline{\gamma}_k - \tfrac{2 V_\Upsilon \overline{\gamma}_k}{\rho} + \tfrac{1}{4 \gamma_{y,k}} } \|y_{k+1} - y_k\|^2 + \tfrac{2 \overline{\gamma}_k}{\rho} \Upsilon_{k+1}] \\
&\le \Phi(x_k,y_k) + \pa{ L_x - \tfrac{1}{4 \gamma_{x,k}} } \|\hat{x}_{k+1} - x_k\|^2 + \pa{ L_y - \tfrac{1}{4 \gamma_{y,k}} } \|\hat{y}_{k+1} - y_k\|^2 + \tfrac{2 \overline{\gamma}_k}{\rho} \Upsilon_k \\
&\qquad + 2 \overline{\gamma}_k (V_1 + \tfrac{V_\Upsilon}{\rho}) \|z_k - z_{k-1}\|^2.
\end{aligned}
\eeqn
Let $\bar{L} = \max\{L_x,L_y\}$. To ensure that the coefficients of $\|x_{k+1} - x_k\|^2$ and $\|y_{k+1} - y_k\|^2$ are non-negative, we set
\begin{equation*}
    \overline{\gamma}_k \le \tfrac{1}{16} \sqrt{\tfrac{\bar{L}^2}{(V_1 + V_\Upsilon / \rho)^2} + \tfrac{16}{(V_1 + V_\Upsilon / \rho)}} - \tfrac{\bar{L}}{16 (V_1 + V_\Upsilon / \rho)},
\eeqn
for all $k \ge 0$. To ensure that the coefficients of $\|\hat{x}_{k+1} - x_k\|^2$ and $\|\hat{y}_{k+1} - y_k\|^2$ are non-positive, we set $\gmaxk < \tfrac{1}{4 L_x}$ and $\gmayk < \tfrac{1}{4 L_y}$, which yields
\begin{equation*}
\begin{aligned}
& \E [\Phi(x_{k+1},y_{k+1}) + 2 \overline{\gamma}_k (V_1 + V_\Upsilon / \rho) \|z_{k+1} - z_k\|^2 + \tfrac{2 \overline{\gamma}_k }{\rho} \Upsilon_{k+1} ] \\
&\le \Phi(x_k,y_k) + \pa{ L_x - \tfrac{1}{4 \gamma_{x,k}} } \|\hat{x}_{k+1} - x_k\|^2 + \pa{ L_y - \tfrac{1}{4 \gamma_{y,k}} } \|\hat{y}_{k+1} - y_k\|^2 \\
& \qquad + 2 \overline{\gamma}_k (V_1 + V_\Upsilon / \rho) \|z_k - z_{k-1}\|^2 + \tfrac{2 \overline{\gamma}_k }{\rho} \Upsilon_k.
\end{aligned}
\eeqn
Because $\overline{\gamma}_k$ is non-increasing,
\begin{equation*}
\begin{aligned}
& \E [\Phi(x_{k+1},y_{k+1}) + 2 \overline{\gamma}_{k+1} (V_1 + V_\Upsilon / \rho) \|z_{k+1} - z_k\|^2 + \tfrac{2 \overline{\gamma}_{k+1}}{\rho} \Upsilon_{k+1} ] \\
&\le \Phi(x_k,y_k) - \nu \|\hat{z}_{k+1} - z_k\|^2 + 2 \overline{\gamma}_k (V_1 + V_\Upsilon / \rho) \|z_k - z_{k-1}\|^2 + \tfrac{2 \overline{\gamma}_k }{\rho} \Upsilon_k,
\end{aligned}
\eeqn
where $\nu = \min\{ \tfrac{1}{4 \gamma_{x,0}} - L_x , \tfrac{1}{4 \gamma_{y,0}} - L_y \}$ Applying the full expectation operator and summing from $k = 0$ to $k = T-1$ gives
\begin{equation*}
    \tfrac{2 \overline{\gamma}_T}{\rho} \Upsilon_T + 2\overline{\gamma}_T (V_1 + V_\Upsilon / \rho ) \|z_T - z_{T-1}\|^2 + \nu \sum\nolimits_{k=0}^{T-1} \mathbb{E} \|\hat{z}_{k+1} - z_k\|^2 \le \Phi(x_0, y_0) + \tfrac{2 \overline{\gamma}_0 }{\rho} \Upsilon_0.
\eeqn
We drop the first two terms on the left from the inequality as they are non-negative. Let $\alpha$ be drawn uniformly at random from the set $\{0,1,\cdots,T-1\}$, and recall $\underline{\gamma}_k \ge \beta$. Using the fact that $\|\hat{z}_{k+1} - z_k\|^2 \ge \tfrac{\beta^2}{4} \dist(0,\mathcal{G}_{\tfrac{\gamma_{x,k}}{2}, \tfrac{\gamma_{y,k}}{2}} (z_k))^2$,
\begin{equation*}
    \mathbb{E} \dist(0,\mathcal{G}_{\tfrac{\gamma_{x,\alpha}}{2}, \tfrac{\gamma_{y,\alpha}}{2}}(z_\alpha))^2 \le \tfrac{4 (\Phi(x_0, y_0) + \frac{2 \overline{\gamma}_0}{\rho} \Upsilon_0)}{T \nu \beta^2},
\eeqn
which completes the proof of the first claim.
\end{proof}

Combining the same argument with the error bound \eqref{eq:errb}, we obtain a linear convergence rate to the global optimum.

\begin{proof}[Proof of Theorem \ref{thm:convrate}, Part 2]
    We begin with equation \eqref{eq:linbegin}:
    \begin{equation*}
        \begin{aligned}
        & \E [\Phi(x_{k+1},y_{k+1}) + \pa{ -\tfrac{L_x}{2} - 2 V_1 \gamma_{x,k} + \tfrac{1}{4 \gamma_{x,k}} } \|x_{k+1} - x_k\|^2  + \pa{ -\tfrac{L_y}{2} - 2 V_1 \gamma_{y,k} + \tfrac{1}{4 \gamma_{y,k}} } \|y_{k+1} - y_k\|^2 ] \\
        &\le \Phi(x_k,y_k) - \nu \|\hat{z}_{k+1} - z_k\|^2 + 2 \overline{\gamma}_k \Upsilon_k + 2 V_1 \overline{\gamma}_k \|z_k - z_{k-1}\|^2.
        \end{aligned}
    \eeqn
    Using \eqref{eq:geo}, we can say for any $c > 0$,
    \begin{equation*}
        0 \le \tfrac{2 c \overline{\gamma}_k}{\rho} \big(- \E \Upsilon_{k+1} + (1 - \rho) \Upsilon_k + V_{\Upsilon} (\|z_{k+1} - z_k\|^2 + \|z_k - z_{k-1}\|^2) \big).
    \end{equation*}
    Adding the previous two inequalities, we have
    \begin{equation*}
    \begin{aligned}
    & \E [\Phi(x_{k+1},y_{k+1}) + \pa{ -\tfrac{L_x}{2} - 2 V_1 \gamma_{x,k} - \tfrac{2 c V_\Upsilon \overline{\gamma}_k}{\rho} + \tfrac{1}{4 \gamma_{x,k}} } \|x_{k+1} - x_k\|^2 \\
    & \qquad + \pa{ -\tfrac{L_y}{2} - 2 V_1 \gamma_{y,k} - \tfrac{2 c V_\Upsilon \overline{\gamma}_k}{\rho} + \tfrac{1}{4 \gamma_{y,k}} } \|y_{k+1} - y_k\|^2 + \tfrac{2 c \overline{\gamma}_k}{\rho} \Upsilon_{k+1} ] \\
    &\le \Phi(x_k,y_k) - \nu \|\hat{z}_{k+1} - z_k\|^2 + 2 \overline{\gamma}_k (V_1 + \tfrac{c V_\Upsilon}{\rho} \|z_k - z_{k-1}\|^2 + \tfrac{2 c \overline{\gamma}_k}{\rho} (1 + \tfrac{\rho}{c} - \rho) \Upsilon_k.
    \end{aligned}
    \eeqn
    Because $\gmaxk < \tfrac{1}{4 L_x}$ and $\gmayk < \tfrac{1}{4 L_y}$, we can apply the error bound assumption \eqref{eq:errb} to say
    \begin{equation*}
        - \nu \|\hat{z}_{k+1} - z_k\|^2 \le - \tfrac{\nu \underline{\gamma}_k^2}{4} \dist(0,\mathcal{G}_{\tfrac{\gamma_{x,k}}{2},\tfrac{\gamma_{y,k}}{2}}(z_k))^2 \le - \tfrac{\mu \nu \underline{\gamma}_k^2}{4} ( \Phi(x_k, y_k) - \underline{\Phi} ).
    \end{equation*}
    In total, we have
    \begin{equation*}
    \begin{aligned}
    & \E [\Phi(x_{k+1},y_{k+1}) - \underline{\Phi} + \pa{ -\tfrac{L_x}{2} - 2 V_1 \gamma_{x,k} - \tfrac{2 c V_\Upsilon \overline{\gamma}_k}{\rho} + \tfrac{1}{4 \gamma_{x,k}} } \|x_{k+1} - x_k\|^2 \\
    & \qquad + \pa{ -\tfrac{L_y}{2} - 2 V_1 \gamma_{y,k} - \tfrac{2 c V_\Upsilon \overline{\gamma}_k}{\rho} + \tfrac{1}{4 \gamma_{y,k}} } \|y_{k+1} - y_k\|^2 + \tfrac{2 c \overline{\gamma}_k}{\rho} \Upsilon_{k+1} ] \\
    &\le ( 1 - \tfrac{\mu \nu \underline{\gamma}_k^2}{4} ) ( \Phi(x_k,y_k) - \underline{\Phi} ) + 2 \overline{\gamma}_k (V_1 + \tfrac{c V_\Upsilon}{\rho} \|z_k - z_{k-1}\|^2 + \tfrac{2 c \overline{\gamma}_k}{\rho} (1 + \tfrac{\rho}{c} - \rho) \Upsilon_k.
    \end{aligned}
    \eeqn
    Choosing $c = 2$, setting the step-sizes so that they satisfy, for all $k$,
    \begin{equation*}
        \overline{\gamma}_k \le \tfrac{1}{20} \sqrt{\tfrac{\bar{L}^2}{(V_1 + 2 V_\Upsilon / \rho)^2} + \tfrac{20}{(V_1 + 2 V_\Upsilon / \rho)}} - \tfrac{\bar{L}}{20 (V_1 + 2 V_\Upsilon / \rho)}, \ \gmaxk < \tfrac{1}{4 L_x}, \ \gmayk < \tfrac{1}{4 L_y}, \ 0 < \beta \le \underline{\gamma}_k,
    \eeqn
    and letting $\Theta = \min\{\mu \nu \beta^2 / 4, \rho / 2 \}$, we have
    \begin{equation*}
    \begin{aligned}
    & \E [ \Phi(x_{k+1},y_{k+1}) - \underline{\Phi} + 2 \overline{\gamma}_k (V_1 + \tfrac{2 V_\Upsilon}{\rho} \|z_{k+1} - z_k\|^2 + \tfrac{4 \overline{\gamma}_k}{\rho} \Upsilon_{k+1} ] \\
    &\le (1 - \Theta) ( \Phi(x_k,y_k) - \underline{\Phi} + 2 \overline{\gamma}_k (V_1 + \tfrac{2 V_\Upsilon}{\rho} \|z_k - z_{k-1}\|^2 + \tfrac{4 \overline{\gamma}_k}{\rho} \Upsilon_k ) .
    \end{aligned}
    \eeqn
    Because $\overline{\gamma}_k$ is non-increasing,
    \begin{equation*}
    \begin{aligned}
    & \E [ \Phi(x_{k+1},y_{k+1}) - \underline{\Phi} + 2 \overline{\gamma}_{k+1} (V_1 + \tfrac{2 V_\Upsilon}{\rho} \|z_{k+1} - z_k\|^2 + \tfrac{4 \overline{\gamma}_{k+1}}{\rho} \Upsilon_{k+1} ] \\
    &\le (1 - \Theta) ( \Phi(x_k,y_k) - \underline{\Phi} + 2 \overline{\gamma}_k (V_1 + \tfrac{2 V_\Upsilon}{\rho} \|z_k - z_{k-1}\|^2 + \tfrac{4 \overline{\gamma}_k}{\rho} \Upsilon_k ) .
    \end{aligned}
    \eeqn
    Applying the full expectation operator and chaining this inequality over the iterations $k = 0$ to $k = T-1$, 
    \begin{equation*}
    \mathbb{E} [ \Phi(x_T,y_T) - \underline{\Phi} ] \le (1 - \Theta)^T \big( \Phi(x_0,y_0) - \underline{\Phi} + \tfrac{4 \overline{\gamma}_0}{\rho} \Upsilon_0 \big),
    \eeqn
    which completes the proof.
\end{proof}

Because SAGA and SARAH gradient estimators are variance-reduced, Theorem \ref{thm:convrate} implies specific convergence rates for Algorithm \ref{alg:spring} when using these estimators.

\begin{corollary}
\label{cor:spec}
    To compute an $\epsilon$-approximate critical point in expectation, Algorithm \ref{alg:spring} using
    \begin{itemize}
        \item SARAH gradient estimator with $p = n$, $\overline{\gamma}_k \le \tfrac{1}{2 L \sqrt{30 n}}$ and any batch size requires no more than $\mathcal{O}\big( L \sqrt{n} / \epsilon^2 \big)$ SFO calls;
        \item SAGA gradient estimator with $b = n^{2/3}$ and $\overline{\gamma}_k \le \frac{1}{2 \sqrt{2710} L}$ requires no more than $\mathcal{O}(L n^{2/3} / \epsilon^2)$ SFO calls.\footnote{For ease of exposition, we do not optimize over constants, so these step-sizes (particularly for SAGA estimator) are not optimal. In general, we find the step-sizes suggested by theory to be conservative in practice (see Section \ref{sec:ex} for details regarding practical step-sizes).}
    \end{itemize}
    If $\Phi$ satisfies the error bound condition \eqref{eq:errb}, then to compute an $\epsilon$-suboptimal point in expectation, Algorithm \ref{alg:spring} using
    \begin{itemize}
        \item the SARAH gradient estimator requires no more than $\mathcal{O}( (n + L \sqrt{n}/\mu ) \log \big(1/\epsilon\big))$ SFO calls;
        \item the SAGA gradient estimator requires no more than $\mathcal{O}( (n + L n^{2/3}/\mu) \log(1/\epsilon))$ SFO calls.
    \end{itemize}
\end{corollary}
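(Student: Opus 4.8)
The plan is to obtain both complexity statements by substituting the variance-reduction constants of Proposition~\ref{prop:const} into Theorem~\ref{thm:convrate} and then converting the resulting per-iteration rates into a count of stochastic first-order oracle (SFO) queries. The first task is to check that the explicit step-sizes in the statement are admissible. For the SARAH estimator with $p=n$ we have $V_1=V_\Upsilon=2L^2$ and $\rho=1/n$, hence $V_1+2V_\Upsilon/\rho=2L^2(1+2n)\le 6L^2 n$; dropping the small $\bar L^2/(V_1+V_\Upsilon/\rho)^2$ term, the right-hand sides of the quadratic step-size conditions in both parts of Theorem~\ref{thm:convrate} are of order $1/\sqrt{L^2 n}$, and a direct estimate shows the constant choice $\overline{\gamma}_k\le\tfrac{1}{2L\sqrt{30n}}$ stays below them; since $\bar L\le L$ this choice also meets $\gamma_{x,k}<\tfrac{1}{4L_x}$ and $\gamma_{y,k}<\tfrac{1}{4L_y}$. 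For the SAGA estimator with $b=n^{2/3}$ we have $V_1=6M^2/b\le 6L^2/n^{2/3}$ and $V_\Upsilon/\rho=268\,n^2L^2/b^3=268L^2$, so $V_1+2V_\Upsilon/\rho=\mathcal{O}(L^2)$ and one checks the same way that $\overline{\gamma}_k\le\tfrac{1}{2\sqrt{2710}\,L}$ is admissible in both parts. Taking all step-sizes equal and constant, we may set $\beta=\overline{\gamma}_k=\underline{\gamma}_k$ and read off $\nu=\min\{\tfrac{1}{4\gamma_{x,0}}-L_x,\,\tfrac{1}{4\gamma_{y,0}}-L_y\}$, which is of order $L\sqrt n$ for SARAH and of order $L$ for SAGA; hence $\nu\beta^2$ is of order $1/(L\sqrt n)$ for SARAH and of order $1/L$ for SAGA.

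For the non-smooth, non-convex part I would substitute these quantities into Theorem~\ref{thm:convrate}, Part~1. Using the natural initialization of each estimator---the exact gradient at $(x_0,y_0)$ for SARAH, and $g_{0,i}=\nabla F_i(x_0,y_0)$ for SAGA---the term $\tfrac{2\overline{\gamma}_0}{\rho}\Upsilon_0$ is either zero or a lower-order additive constant, so $\mathbb{E}\,\dist(0,\mathcal{G}(z_\alpha))^2=\mathcal{O}(L\sqrt n/T)$ for SARAH and $\mathcal{O}(L/T)$ for SAGA. By Jensen's inequality $\mathbb{E}\,\dist(0,\mathcal{G}(z_\alpha))\le(\mathbb{E}\,\dist(0,\mathcal{G}(z_\alpha))^2)^{1/2}$, so it suffices to run $T=\mathcal{O}(L\sqrt n/\epsilon^2)$ iterations of SPRING with SARAH and $T=\mathcal{O}(L/\epsilon^2)$ with SAGA. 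Multiplying by the expected per-iteration oracle cost completes this part: a SARAH iteration costs $2b$ SFO queries with probability $1-1/p$ and $n$ with probability $1/p=1/n$, hence $\mathcal{O}(b)$ in expectation, plus a one-time $\mathcal{O}(n)$ for the initial full gradient, giving $\mathcal{O}(n+bL\sqrt n/\epsilon^2)=\mathcal{O}(L\sqrt n/\epsilon^2)$ (the iteration count does not depend on $b$, which is why ``any batch size'' works, with $b=\mathcal{O}(1)$ minimizing the total); a SAGA iteration costs $b=n^{2/3}$ SFO queries plus a one-time $\mathcal{O}(n)$, giving $\mathcal{O}(n+Ln^{2/3}/\epsilon^2)=\mathcal{O}(Ln^{2/3}/\epsilon^2)$.

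For the error-bound case I would apply the same substitutions to Theorem~\ref{thm:convrate}, Part~2, where the linear rate is controlled by $\Theta=\min\{\mu\nu\beta^2/4,\ \rho/2\}$. For SARAH, $\rho/2=1/(2n)$ and $\mu\nu\beta^2/4$ is of order $\mu/(L\sqrt n)$, so $1/\Theta=\mathcal{O}(n+L\sqrt n/\mu)$; for SAGA, $\rho/2=b/(4n)=1/(4n^{1/3})$ and $\mu\nu\beta^2/4$ is of order $\mu/L$, so $1/\Theta=\mathcal{O}(n^{1/3}+L/\mu)$. Since $(1-\Theta)^T\le e^{-\Theta T}$, reaching $\mathbb{E}[\Phi(x_T,y_T)-\underline{\Phi}]\le\epsilon$ requires $T=\mathcal{O}(\log(1/\epsilon)/\Theta)$ iterations; multiplying by the per-iteration oracle cost and adding the one-time $\mathcal{O}(n)$ yields $\mathcal{O}((n+L\sqrt n/\mu)\log(1/\epsilon))$ for SARAH and $\mathcal{O}((n+Ln^{2/3}/\mu)\log(1/\epsilon))$ for SAGA.

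I expect the only genuine effort to be bookkeeping: verifying cleanly that the explicit, deliberately non-optimized step-sizes clear the somewhat unwieldy quadratic conditions of Theorem~\ref{thm:convrate} for both parts simultaneously, and tracking the initialization-dependent quantity $\Upsilon_0$ together with the one-time $\mathcal{O}(n)$ oracle cost so that they are absorbed into---rather than dominate---the stated bounds. No idea beyond Theorem~\ref{thm:convrate} and Proposition~\ref{prop:const} is required.
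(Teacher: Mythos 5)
Your proposal is correct and follows the only natural route: substitute the variance-reduction constants from Proposition~\ref{prop:const} into the two parts of Theorem~\ref{thm:convrate}, verify the stated step-sizes satisfy the (Part~2, $c=2$) quadratic conditions — and indeed $\tfrac{1}{2L\sqrt{30n}}=\tfrac{1}{\sqrt{20\cdot 6L^2n}}$ and $\tfrac{1}{2\sqrt{2710}L}=\tfrac{1}{\sqrt{20\cdot 542L^2}}$ are exactly $\tfrac{1}{\sqrt{20D}}$ for the respective upper bounds $D$ on $V_1+2V_\Upsilon/\rho$ — then compute $\nu\beta^2$, apply Jensen, and multiply by the per-iteration oracle cost. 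The paper gives no separate proof for this corollary, so this is essentially the intended argument; your observations that ``any batch size'' refers to the iteration count being independent of $b$ (with $b=\mathcal{O}(1)$ needed to match the SFO bound) and that the one-time $\mathcal{O}(n)$ and the $\Upsilon_0$ term are absorbed under standard initialization are both accurate readings of the authors' intent.
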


\begin{remark}
The improved dependence on $n$ when using SARAH gradient estimator exists in all of our convergence rates for SPRING. Because most existing works on stochastic optimization for non-smooth, non-convex problems use models that are special cases of \eqref{eq:J-F-R}, our results for SPRING capture most existing work as special cases. In particular, in the case $R \equiv J \equiv 0$, our results recover recent results showing that SARAH achieves the \emph{oracle complexity lower-bound} for non-convex problems with a finite-sum structure \cite{spider,proxsarah,spiderboost,zhou2019lower,spiderm}.
\end{remark}

\section{Convergence Rate under the KL Property}
\label{sec:kl}

The results from previous section require only assumptions \eqref{item:A-JR} to \eqref{item:A-Flast}. To prove convergence of the sequence of the algorithm, and to obtain convergence rates depending on the KL exponent of the objective, two extra conditions are required. In this section, under the assumption that the objective function $\Phi$ is KL and the sequence generated by SPRING is bounded, we prove convergence of the sequence and extend the convergence rates of PALM to SPRING.
To derive these results, we need some preparatory results which generalize the claims of PALM \cite{bolte2014proximal} to stochastic setting. Define the quantity
\begin{equation} \label{eq:psi}
    \Psi_k \defeq \Phi(z_k) + \tfrac{1}{2 \rho \sqrt{2 (V_1 + V_\Upsilon / \rho)}} \Upsilon_k + \tfrac{\sqrt{V_1 + V_\Upsilon / \rho}}{\sqrt{2}} \|z_k - z_{k-1}\|^2  .
\end{equation}
Our first result guarantees that $\Psi_k$ is decreasing in expectation.

\begin{lemma}[$\ell_2$ summability]
\label{lem:seqdec}
    Let $\{ z_k \}_{k=0}^\infty$ be the sequence generated by SPRING with $\overline{\gamma}_k$ non-increasing and satisfying $\overline{\gamma}_k < \tfrac{\sqrt{2}}{5 (\sqrt{V_1 + V_\Upsilon / \rho} + \bar{L})},\, \forall k$,
    then $\Psi_{k} $ satisfies
    \begin{equation}
    \label{eq:psidec}
        \E \Psi_{k+1}
        \le \Psi_k + \big( \tfrac{\bar{L}}{2} + \tfrac{3}{2} \sqrt{ 2 (V_1 + V_\Upsilon / \rho) } - \tfrac{1}{2 \overline{\gamma}_k} \big) \E \|z_{k+1} - z_k\|^2 - \tfrac{\sqrt{V_1 + V_\Upsilon / \rho}}{2 \sqrt{2}} \|z_k - z_{k-1}\|^2,
    \end{equation}
    and the expectation of the squared distance between the iterates is summable:
    \begin{equation*}
        \sum\nolimits_{k=0}^\infty \mathbb{E} \left[ \|x_{k+1} - x_k\|^2 + \|y_{k+1} - y_k\|^2 \right] = \sum\nolimits_{k=0}^\infty \mathbb{E} \|z_{k+1} - z_k\|^2 < \infty.
    \end{equation*}
\end{lemma}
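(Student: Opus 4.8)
The plan is to upgrade the one-step estimate behind \eqref{eq:z}--\eqref{eq:linbegin} into a genuine supermartingale-type recursion for the Lyapunov functional $\Psi_k$ of \eqref{eq:psi} and then telescope it. Abbreviate $\Lambda \defeq V_1 + V_\Upsilon/\rho$, so that $\Psi_k = \Phi(z_k) + \tfrac{1}{2\rho\sqrt{2\Lambda}}\Upsilon_k + \tfrac{\sqrt{\Lambda}}{\sqrt 2}\|z_k - z_{k-1}\|^2$; the whole point is that the two weights $\tfrac{1}{2\rho\sqrt{2\Lambda}}$ and $\tfrac{\sqrt{\Lambda}}{\sqrt 2}$, together with a single Young's-inequality parameter introduced below, can be calibrated so that everything telescopes.

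First I would produce a per-iteration descent bound \emph{without} the auxiliary points $\hat z_{k+1}$ used in the proof of Theorem \ref{thm:convrate}. Apply the sufficient-decrease estimate of Lemma \ref{lem:descent} (equivalently Lemma \ref{lem:noncon} with $x=y$) to the $x$-update, taking $F = F(\cdot, y_k)$, $R = J$ and the stochastic direction $\tnablax(x_k,y_k)$, and then to the $y$-update, taking $F = F(x_{k+1},\cdot)$, $R = R$ and direction $\tnablay(x_{k+1},y_k)$; in each case bound the inner product between the estimator error and the iterate displacement by Young's inequality, choosing its parameter so that the estimator-error term gets coefficient $\tfrac{1}{2\sqrt{2\Lambda}}$. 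Adding the two inequalities and bounding $L_x, L_y \le \bar{L}$, $\gamma_{x,k}, \gamma_{y,k} \le \overline{\gamma}_k$ gives
\begin{align*}
\Phi(z_{k+1}) &\le \Phi(z_k) + \tfrac{1}{2\sqrt{2\Lambda}}\left(\|\tnablax(x_k,y_k)-\nablax F(x_k,y_k)\|^2 + \|\tnablay(x_{k+1},y_k)-\nablay F(x_{k+1},y_k)\|^2\right) \\
&\qquad + \left(\tfrac{\bar{L}}{2}+\tfrac{\sqrt{2\Lambda}}{2}-\tfrac{1}{2\overline{\gamma}_k}\right)\|z_{k+1}-z_k\|^2 .
\end{align*}

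Next I would take the conditional expectation $\E$, bound the expected sum of squared estimator errors by the right-hand side of \eqref{eq:varreduc} (which contributes $\tfrac{1}{2\sqrt{2\Lambda}}\Upsilon_k$ plus $\tfrac{V_1}{2\sqrt{2\Lambda}}(\E\|z_{k+1}-z_k\|^2+\|z_k-z_{k-1}\|^2)$), and then use the geometric decay \eqref{eq:geo} in the rearranged form $\Upsilon_k \le \tfrac1\rho(\Upsilon_k-\E\Upsilon_{k+1}) + \tfrac{V_\Upsilon}{\rho}(\E\|z_{k+1}-z_k\|^2+\|z_k-z_{k-1}\|^2)$ to replace the bare $\Upsilon_k$ by its one-step decrement. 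The Young's coefficient $\tfrac{1}{2\sqrt{2\Lambda}}$ is engineered precisely so that $\tfrac{1}{2\sqrt{2\Lambda}}\cdot\tfrac1\rho = \tfrac{1}{2\rho\sqrt{2\Lambda}}$ and $\tfrac{1}{2\sqrt{2\Lambda}}(V_1+V_\Upsilon/\rho) = \tfrac{\sqrt{\Lambda}}{2\sqrt2}$ are exactly the weight of $\Upsilon_k$ in $\Psi_k$ and half the weight of $\|z_k-z_{k-1}\|^2$ in $\Psi_k$. Collecting terms and moving $\tfrac{\sqrt{\Lambda}}{\sqrt2}\E\|z_{k+1}-z_k\|^2 + \tfrac{1}{2\rho\sqrt{2\Lambda}}\E\Upsilon_{k+1}$ to the left reconstitutes $\E\Psi_{k+1}$ and yields \eqref{eq:psidec} (in fact with the smaller coefficient $\tfrac{\bar{L}}{2}+\tfrac54\sqrt{2\Lambda}-\tfrac{1}{2\overline{\gamma}_k}$ on $\E\|z_{k+1}-z_k\|^2$, so the stated bound follows a fortiori). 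For the summability claim, rewrite \eqref{eq:psidec} using $\hat\Psi_k \defeq \Phi(z_k) + \tfrac{1}{2\rho\sqrt{2\Lambda}}\Upsilon_k + \tfrac{\sqrt{\Lambda}}{2\sqrt2}\|z_k-z_{k-1}\|^2$ (half the displacement weight of $\Psi_k$) as $\E\hat\Psi_{k+1} \le \hat\Psi_k - \left(\tfrac{1}{2\overline{\gamma}_k}-\tfrac{\bar{L}}{2}-\tfrac54\sqrt{2\Lambda}\right)\E\|z_{k+1}-z_k\|^2$. The hypothesis $\overline{\gamma}_k < \tfrac{\sqrt2}{5(\sqrt{\Lambda}+\bar{L})}$, together with the monotonicity of $\overline{\gamma}_k$, makes the bracket bounded below by a fixed $\delta>0$; taking full expectations, summing over $k=0,\dots,T-1$, and using $\hat\Psi_T \ge \Phi(z_T) \ge \underline{\Phi}$ gives $\delta\sum_{k=0}^{T-1}\EE\|z_{k+1}-z_k\|^2 \le \hat\Psi_0 - \underline{\Phi} < \infty$, and letting $T\to\infty$ proves $\sum_{k=0}^\infty\EE\|z_{k+1}-z_k\|^2 < \infty$.

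The step I expect to be the main obstacle is this joint calibration: there is effectively one free parameter (from Young's inequality) but two coefficients in $\Psi_k$ to pin down, and they must be chosen so that (i) the $\Upsilon$-decrement delivered by \eqref{eq:geo} is compatible with the $\Upsilon_k$ already inside $\Psi_k$, and (ii) the displacement mass $\|z_{k+1}-z_k\|^2$ created at iteration $k$ is exactly reabsorbed at iteration $k+1$, so that the result is a contraction of one functional rather than a loose per-step bound --- getting the constants to line up (here the Young's parameter $\sqrt{2\Lambda}=\sqrt{2(V_1+V_\Upsilon/\rho)}$) is the delicate part. A secondary but essential care point is measurability: $z_k, z_{k-1}, \Upsilon_k$ are determined by the first $k$ iterations whereas $z_{k+1}, \Upsilon_{k+1}$ are not, so \eqref{eq:varreduc} and \eqref{eq:geo} must be applied under $\E$ and the tower property invoked before the final telescoping; one should also check that the numeric threshold $\tfrac{\sqrt2}{5(\sqrt{\Lambda}+\bar{L})}$ is indeed small enough to keep the bracket strictly positive.
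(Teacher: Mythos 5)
Your argument reproduces the paper's own proof essentially verbatim: the paper likewise applies Lemma \ref{lem:descent} with a free Young's parameter $\lambda$, bounds the MSE via \eqref{eq:varreduc}, trades $\Upsilon_k$ for its one-step decrement via \eqref{eq:geo}, introduces a free weight $Z$ on $\|z_k-z_{k-1}\|^2$, and then fixes $\lambda = \sqrt{2(V_1+V_\Upsilon/\rho)}/\bar{L}$ and $Z = \sqrt{V_1+V_\Upsilon/\rho}/(2\sqrt{2})$ --- exactly your calibration, your coefficient $\tfrac{1}{2\sqrt{2(V_1+V_\Upsilon/\rho)}}$ on the estimator-error term being $\tfrac{1}{2\bar{L}\lambda}$ for that $\lambda$. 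One cosmetic slip: absorbing the halved displacement weight into your $\hat\Psi_k$ actually improves the bracket to $\tfrac{\bar{L}}{2} + \sqrt{2(V_1+V_\Upsilon/\rho)} - \tfrac{1}{2\overline{\gamma}_k}$ rather than $\tfrac{\bar{L}}{2} + \tfrac{5}{4}\sqrt{2(V_1+V_\Upsilon/\rho)} - \tfrac{1}{2\overline{\gamma}_k}$, so your claimed one-step decrease holds a fortiori and the summability conclusion is unaffected.
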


\begin{proof}
    Applying Lemma \ref{lem:descent} twice, once for the update in $x_k$ and once for the update in $y_k$, we have
    \begin{equation*}
    \begin{aligned}
        F(x_{k+1},y_k) + J(x_{k+1}) &\le F(x_k,y_k) + J(x_k) + \tfrac{1}{2 \bar{L} \lambda} \|\tnablax (x_k,y_k) - \nablax F(x_k,y_k) \|^2 \\
        &\qquad + \big( \tfrac{\bar{L} (\lambda + 1)}{2} - \tfrac{1}{2 \gamma_{x,k}} \big) \|x_{k+1} - x_k\|^2,
    \end{aligned}
    \end{equation*}
    as well as
    \begin{equation*}
    \begin{aligned}
        F(x_{k+1},y_{k+1}) + R(y_{k+1}) &\le F(x_{k+1},y_k) + R(y_k) + \big( \tfrac{\bar{L} (\lambda + 1)}{2} - \tfrac{1}{2 \gamma_{y,k}} \big) \|y_{k+1} - y_k\|^2  \\& \qquad+ \tfrac{1}{2 \bar{L} \lambda} \|\tnablay (x_{k+1},y_k) - \nablay F(x_{k+1},y_k) \|^2  .
    \end{aligned}
    \end{equation*}
    Adding these inequalities together,
    \begin{equation*}
    \begin{aligned}
        \Phi (x_{k+1}, y_{k+1})
        &\le \Phi(x_k,y_k) + \tfrac{1}{2 \bar{L} \lambda} \|\tnablax (x_k,y_k) - \nablax F(x_k,y_k) \|^2\\
        &\qquad + \tfrac{1}{2 \bar{L} \lambda} \|\tnablay (x_{k+1},y_k) - \nablay F(x_{k+1},y_k) \|^2  + \big( \tfrac{\bar{L} (\lambda + 1)}{2} - \tfrac{1}{2 \overline{\gamma}_k} \big) \|z_{k+1} - z_k\|^2.
    \end{aligned}
    \end{equation*}
    Applying the conditional expectation operator $\E$, we can bound the MSE terms using \eqref{eq:varreduc}. This gives
    \begin{equation}
    \label{eq:lyap}
         \E \big[ \Phi (z_{k+1}) + \big( - \tfrac{\bar{L} (\lambda + 1)}{2} - \tfrac{V_1}{2 \bar{L} \lambda} + \tfrac{1}{2 \overline{\gamma}_k} \big) \|z_{k+1} - z_k\|^2 \big]
         \le \Phi(z_k) + \tfrac{1}{2 \bar{L} \lambda} \Upsilon_k + \tfrac{V_1}{2 \bar{L} \lambda} \|z_k - z_{k-1}\|^2.
    \end{equation}
    Next, we use \eqref{eq:geo} to say
    \begin{equation*}
        \tfrac{1}{2 \bar{L} \lambda} \Upsilon_k \le \tfrac{1}{2 \bar{L} \lambda \rho} \big(-\E \Upsilon_{k+1} + \Upsilon_k + V_{\Upsilon} (\E \|z_{k+1} - z_k\|^2 + \|z_k - z_{k-1}\|^2) \big).
    \end{equation*}
    Combining these inequalities, we have
    \begin{equation*}
    \begin{aligned}
        & \E \Big[ \Phi (z_{k+1}) + \tfrac{1}{2 \bar{L} \lambda \rho} \Upsilon_{k+1} + \big( - \tfrac{\bar{L} (\lambda + 1)}{2} - \tfrac{V_1 + V_\Upsilon / \rho}{2 \bar{L} \lambda} + \tfrac{1}{2 \overline{\gamma}_k} \big) \|z_{k+1} - z_k\|^2 \Big] \\
        &\le \Phi(z_k) + \tfrac{1}{2 \bar{L} \lambda \rho} \Upsilon_k + \tfrac{V_1 + V_\Upsilon / \rho}{2 \bar{L} \lambda} \|z_k - z_{k-1}\|^2.
    \end{aligned}
    \end{equation*}
    This is equivalent to
    \begin{equation*}
    \begin{aligned}
        & \E \Big[ \Phi (z_{k+1}) + \tfrac{1}{2 \bar{L} \lambda \rho} \Upsilon_{k+1} + \big( \tfrac{V_1 + V_\Upsilon / \rho}{2 \bar{L} \lambda} + Z \big) \|z_{k+1} - z_k\|^2 + \big( - \tfrac{\bar{L} (\lambda + 1)}{2} - \tfrac{V_1 + V_\Upsilon / \rho}{\bar{L} \lambda} - Z + \tfrac{1}{2 \overline{\gamma}_k} \big) \|z_{k+1} - z_k\|^2 \Big] \\
        &\le \Phi(z_k) + \tfrac{1}{2 \bar{L} \lambda \rho} \Upsilon_k + \big( \tfrac{V_1 + V_\Upsilon / \rho}{2 \bar{L} \lambda} + Z \big) \|z_k - z_{k-1}\|^2 - Z \|z_k - z_{k-1}\|^2,
    \end{aligned}
    \end{equation*}
    for some constant $Z \ge 0$. Setting $\overline{\gamma}_k \le (2 (\frac{\bar{L} (\lambda + 1)}{2} + \frac{V_1 + V_\Upsilon / \rho}{\bar{L} \lambda} + Z) )^{-1}$ and using the fact that $\overline{\gamma}_k$ is non-increasing, we have
    \begin{equation*}
    \label{eq:tele}
        \E \Psi_{k+1} \le \Psi_k + \big( \tfrac{\bar{L} (\lambda + 1)}{2} + \tfrac{V_1 + V_\Upsilon / \rho}{\bar{L} \lambda} + Z - \tfrac{1}{2 \overline{\gamma}_k} \big) \E \|z_{k+1} - z_k\|^2 - Z \|z_k - z_{k-1}\|^2.
    \end{equation*}
    proving the first claim that $\Psi_k$ is decreasing in expectation. To approximately maximize our bound on $\overline{\gamma}_k$, we set $\lambda = \frac{\sqrt{2 (V_1 + V_\Upsilon / \rho)}}{\bar{L}}$.

    To prove the second claim, we apply the full expectation operator to \eqref{eq:tele} and sum the resulting inequality from $k = 0$ to $k = T-1$,
    \begin{equation*}
        \mathbb{E} \Psi_T \le \Psi_0 + \big( \tfrac{\bar{L} (\lambda + 1)}{2} + \tfrac{V_1 + V_\Upsilon / \rho}{\bar{L} \lambda} + Z - \tfrac{1}{2 \overline{\gamma}_k} \big) \sum\nolimits_{k=0}^{T-1} \mathbb{E} \|z_{k+1} - z_k\|^2  - Z \mathbb{E} \|z_k - z_{k-1}\|^2.
    \end{equation*}
    Rearranging and using the facts that $\underline{\Phi} \le \Psi_T$ and $\overline{\gamma}_k$ is non-increasing,
    \begin{equation}
    \label{eq:2}
    \begin{aligned}
        & \big( \tfrac{1}{2 \overline{\gamma}_k} - \tfrac{\bar{L} (\lambda + 1)}{2} - \tfrac{V_1 + V_\Upsilon / \rho}{\bar{L} \lambda} - Z \big) \sum\nolimits_{k=0}^{T-1} \mathbb{E} \|z_{k+1} - z_k\|^2 + Z \mathbb{E} \|z_k - z_{k-1}\|^2 \le \Psi_0 - \underline{\Phi}.
    \end{aligned}
    \end{equation}
    Taking the limit $T \to + \infty$ proves that the sequence $\EE \|z_{k+1} - z_k\|^2$ is summable.
    Relations \eqref{eq:tele} and \eqref{eq:2} hold for any $Z \ge 0$; we set $Z = \frac{\sqrt{V_1 + V_\Upsilon / \rho}}{2 \sqrt{2}}$ to simplify later arguments.
\end{proof}

The next lemma establishes a bound on the norm of subgradients of $\Phi(z_k)$.

\begin{lemma}[Subgradient Bound]
\label{lem:subbound}
    Let $\{z_k\}_{k\in \mathbb{N}}$ be the sequence generated by SPRING with step-sizes satisfying $0 < \beta \le \underline{\gamma}_k$. Define
    \begin{equation*}
    \begin{aligned}
        A_x^k &\defeq 1 / \gmaxk (x_{k-1} - x_k) + \nablax F(x_k,y_k) - \tnablax (x_{k-1},y_{k-1}) \quad \textrm{and} \\
        A_y^k &\defeq 1 / \gmayk (y_{k-1} - y_k) + \nablay F(x_k,y_k) - \tnablay (x_k,y_{k-1}).
        \end{aligned}
    \end{equation*}
    Then $(A_x^k, A_y^k) \in \partial \Phi(x_k,y_k)$ and, with $p = 1 / \beta + M + L_y + V_2$,
    \begin{equation}
    \label{eq:subbound}
        \mathbb{E}_{k-1} \|(A_x^k, A_y^k)\| \le p (\mathbb{E}_{k-1} \left\| z_k - z_{k-1} \right\| + \|z_{k-1} - z_{k-2}\|) + \Gamma_{k-1}.
    \end{equation}
\end{lemma}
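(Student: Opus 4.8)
The plan is to establish the inclusion $(A_x^k, A_y^k) \in \partial \Phi(x_k,y_k)$ from the optimality conditions of the two proximal steps, and then to bound the norm by writing $(A_x^k, A_y^k)$ as a sum of three pieces: a ``step-size'' piece, a ``Lipschitz'' piece, and an ``estimator-error'' piece controlled by Definition \ref{def:varred}.

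For the inclusion, I would write out the first-order optimality condition of the update that produced $x_k$, namely $x_k \in \prox_{\gamma_{x,k} J}(x_{k-1} - \gamma_{x,k}\tnablax(x_{k-1},y_{k-1}))$: this reads $\tfrac{1}{\gamma_{x,k}}(x_{k-1}-x_k) - \tnablax(x_{k-1},y_{k-1}) \in \partial J(x_k)$, and adding $\nablax F(x_k,y_k)$ to both sides gives $A_x^k \in \partial J(x_k) + \nablax F(x_k,y_k)$. The identical argument for the $y$-update (which uses the estimator $\tnablay(x_k,y_{k-1})$) gives $A_y^k \in \partial R(y_k) + \nablay F(x_k,y_k)$. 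Since $F\in C^1$ and $(x,y) \mapsto J(x)+R(y)$ is separable, the limiting subdifferential of $\Phi$ at $(x_k,y_k)$ factorizes as $\bigl(\partial J(x_k) + \nablax F(x_k,y_k)\bigr) \times \bigl(\partial R(y_k) + \nablay F(x_k,y_k)\bigr)$, whence $(A_x^k,A_y^k) \in \partial\Phi(x_k,y_k)$.

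For the quantitative bound, I would insert $\pm\,\nablax F(x_{k-1},y_{k-1})$ into $A_x^k$ and $\pm\,\nablay F(x_k,y_{k-1})$ into $A_y^k$, so that $(A_x^k,A_y^k) = B^k + C^k + D^k$, where $B^k = \bigl(\tfrac{1}{\gamma_{x,k}}(x_{k-1}-x_k),\,\tfrac{1}{\gamma_{y,k}}(y_{k-1}-y_k)\bigr)$, the term $C^k$ collects the gradient differences $\nablax F(x_k,y_k) - \nablax F(x_{k-1},y_{k-1})$ and $\nablay F(x_k,y_k) - \nablay F(x_k,y_{k-1})$, and $D^k = \bigl(\nablax F(x_{k-1},y_{k-1}) - \tnablax(x_{k-1},y_{k-1}),\ \nablay F(x_k,y_{k-1}) - \tnablay(x_k,y_{k-1})\bigr)$ is exactly the estimator error at step $k-1$. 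Then: (i) since $\gamma_{x,k},\gamma_{y,k} \ge \underline{\gamma}_k \ge \beta$, estimating $B^k$ jointly as a vector in the product space gives $\|B^k\| \le \tfrac{1}{\beta}\|z_k - z_{k-1}\|$ (doing this jointly, rather than bounding the two blocks separately, is what yields the single $\tfrac{1}{\beta}$ in $p$); (ii) bounding $\|C^k\|$ by the sum of its block norms, the $M$-Lipschitz continuity of $\nabla F$ along the (bounded) iterates controls the first block by $M\|z_k-z_{k-1}\|$, while the $y$-partial Lipschitz constant $L_y = \max_k L_2(x_k)$ controls the second by $L_y\|y_k-y_{k-1}\| \le L_y\|z_k-z_{k-1}\|$, hence $\|C^k\| \le (M+L_y)\|z_k-z_{k-1}\|$; (iii) taking $\mathbb{E}_{k-1}$ of $\|D^k\| \le \|D_x^k\| + \|D_y^k\|$ and invoking the $\ell_1$-type MSE estimate of Definition \ref{def:varred} with indices shifted down one step yields $\mathbb{E}_{k-1}\|D^k\| \le \Gamma_{k-1} + V_2\bigl(\mathbb{E}_{k-1}\|z_k-z_{k-1}\| + \|z_{k-1}-z_{k-2}\|\bigr)$. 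Adding the three estimates via the triangle inequality and the monotonicity of $\mathbb{E}_{k-1}$, and then enlarging the coefficient $V_2$ of $\|z_{k-1}-z_{k-2}\|$ to $p = \tfrac{1}{\beta}+M+L_y+V_2$, produces \eqref{eq:subbound}.

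The work is essentially bookkeeping rather than analysis, and that is also where the only real care is needed. One must shift every index down by one so that the proximal optimality condition, the conditional expectation $\mathbb{E}_{k-1}$, and the variance-reduced bounds of Definition \ref{def:varred} (stated at ``step $k$'') all refer to the \emph{same} update. And one must choose the intermediate gradient evaluations so that $D^k$ is \emph{precisely} the estimator error appearing in Definition \ref{def:varred} --- in particular inserting $\nablay F(x_k,y_{k-1})$ and not $\nablay F(x_{k-1},y_{k-1})$ --- so that the matching difference in $C^k$ changes only the $y$-block and can be controlled by $L_y$ rather than the larger full constant $M$; this is exactly what makes the constant come out as $p = \tfrac{1}{\beta}+M+L_y+V_2$.
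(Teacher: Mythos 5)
Your proposal is correct and mirrors the paper's proof essentially step for step: both establish the inclusion from the proximal optimality conditions, insert the same two intermediate gradients $\nablax F(x_{k-1},y_{k-1})$ and $\nablay F(x_k,y_{k-1})$ so that the $x$-block difference is controlled by $M$ and the $y$-block difference by $L_y$, and invoke the shifted $\ell_1$-type MSE bound from Definition \ref{def:varred} to handle the estimator error. The only departure is cosmetic bookkeeping (you group $(A_x^k,A_y^k)$ jointly into the three vectors $B^k,C^k,D^k$, while the paper bounds $\|A_x^k\|$ and $\|A_y^k\|$ separately by the triangle inequality and then adds), which yields the same constant $p = 1/\beta + M + L_y + V_2$.
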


\begin{proof}
    The fact that $(A_x^k, A_y^k) \in \partial \Phi(x_k,y_k)$ is clear from the definition of the proximal operator:
    \begin{equation*}
    \begin{aligned}
        \tfrac{1}{\gmaxk} (x_{k-1} - x_k) - \tnablax (x_{k-1},y_{k-1}) &\in \partial J(x_k), \\
        \tfrac{1}{\gmayk} (y_{k-1} - y_k) - \tnablay (x_k,y_{k-1}) &\in \partial R(y_k).
    \end{aligned}
    \end{equation*}
    Combining this with the fact that $\partial \Phi(x_k,y_k) = (\nablax F(x_k, y_k) + \partial J(x_k), \nablay F(x_k, y_k) + \partial R(y_k))$ makes it clear that $(A_x^k,A_y^k) \in \partial \Phi(x_k,y_k)$. All that remains is to bound the norms of $A_x^k$ and $A_y^k$. Because $\nabla F$ is $M$-Lipschitz continuous on bounded sets,
    \begin{equation}
    \begin{aligned}
    \label{eq:Ax}
        \mathbb{E}_{k-1} \|A_x^k\|
        & \le \tfrac{1}{\gmaxk} \mathbb{E}_{k-1} \|x_{k-1} - x_k\| + \mathbb{E}_{k-1} \| \nablax F(x_k,y_k) - \tnablax (x_{k-1},y_{k-1}) \| \\
        & \le \tfrac{1}{\gmaxk} \mathbb{E}_{k-1} \|x_{k-1} - x_k\| + \mathbb{E}_{k-1} \| \nablax F(x_k,y_k) - \nablax F(x_{k-1}, y_{k-1}) \| \\
        & \qquad + \mathbb{E}_{k-1} \| \nablax F(x_{k-1}, y_{k-1}) - \tnablax (x_{k-1},y_{k-1}) \| \\
        & \le \big( \tfrac{1}{\gmaxk} + M \big) \mathbb{E}_{k-1} \|x_{k-1} - x_k\| + M \mathbb{E}_{k-1} \| y_k - y_{k-1} \| \\
        & \qquad + \mathbb{E}_{k-1} \| \nablax F(x_{k-1}, y_{k-1}) - \tnablax (x_{k-1},y_{k-1}) \|.
    \end{aligned}
    \end{equation}
    A similar argument holds for $\|A_y^k\|$.
    \begin{equation*}
    \begin{aligned}
        \mathbb{E}_{k-1} \|A_y^k\|
        & \le \tfrac{1}{\gmayk} \mathbb{E}_{k-1} \|y_{k-1} - y_k\| + \mathbb{E}_{k-1} \| \nablay F(x_k,y_k) - \tnablay (x_k,y_{k-1}) \| \\
        & \le \tfrac{1}{\gmayk} \mathbb{E}_{k-1} \|y_{k-1} - y_k\| + \mathbb{E}_{k-1} \| \nablay F(x_k,y_k) - \nablay F(x_k, y_{k-1}) \| \\
        & \qquad + \mathbb{E}_{k-1} \| \nablay F(x_k, y_{k-1}) - \tnablay (x_k,y_{k-1}) \| \\
        & \le \big(\tfrac{1}{\gmayk} + L_y \big) \mathbb{E}_{k-1} \|y_{k-1} - y_k\| + \mathbb{E}_{k-1} \| \nablay F(x_k, y_{k-1}) - \tnablay (x_k,y_{k-1}) \|.
    \end{aligned}
    \end{equation*}
    Adding these two inequalities together and using equation \eqref{eq:varreduc} to bound the MSE terms, we get
    \begin{equation*}
        \mathbb{E}_{k-1} \|(A_x^k, A_y^k)\| \le  \mathbb{E}_{k-1} \left[ \|A_x^k\| + \|A_y^k\| \right] \le p ( \mathbb{E}_{k-1} \| z_k - z_{k-1} \| + \|z_{k-1} - z_{k-2}\|) + \Gamma_{k-1},
    \end{equation*}
    where $p = 1 / \beta + M + L_y + V_2$.
\end{proof}

The following lemma describes the limiting behavior of $\{z_k\}_{k=0}^\infty$. The set of limit points of $\{z_k\}_{k=0}^\infty$ is defined as
\begin{equation*}
    \omega(z_0) \defeq \{ z : \exists \textnormal{ an increasing sequence of integers } \{k_\ell\}_{\ell \in \mathbb{N}} \textnormal{ such that } z_{k_\ell} \to z \textnormal{ as } \ell \to +\infty \}.
\end{equation*}

\begin{lemma}[Limit points of $\{z_k\}_{k=0}^\infty$]
\label{lem:4case}
Suppose assumptions \eqref{item:A-JR}-\eqref{item:A-Flast} hold, that the sequence $z_k=(x_k, y_k)$ is \emph{bounded}, and the step-sizes of Algorithm \ref{alg:spring} satisfy the following conditions:
    \begin{equation*}
        \gmaxk, \gmayk \in \big[ \beta, \tfrac{\sqrt{2}}{5 (\sqrt{V_1 + V_\Upsilon / \rho} + \bar{L})} \big) \qquad \forall k,
    \end{equation*}
    and $\overline{\gamma}_k$ is non-increasing. Then
    \begin{enumerate}
        \item $\sum_{k = 1}^\infty \|z_k - z_{k-1}\|^2 < \infty$ a.s., and $\|z_k - z_{k-1}\| \to 0$ a.s.;
        \item $\EE \Phi(z_k) \to \Phi^\star$, where $\Phi^\star \in [\underline{\Phi}, \infty)$;
        \item $\EE \dist(0,\partial \Phi(z_k)) \to 0$;
        \item The set $\omega(z_0)$ is non-empty, and for all $z^\star \in \omega(z_0), \ \EE \dist(0,\partial \Phi(z^\star)) = 0$;
        \item $\dist(z_k,\omega(z_0)) \to 0$ a.s.;
        \item $\omega(z_0)$ is a.s. compact and connected;
        \item $\EE \Phi(z^\star) = \Phi^\star$ for all $z^\star \in \omega(z_0)$.
    \end{enumerate}
\end{lemma}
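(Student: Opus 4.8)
The plan is to derive all seven conclusions from a single mechanism: first lift the $L^1$ estimates of the two preparatory lemmas to almost-sure statements along the full sequence, then run the deterministic PALM-style limit-point analysis pathwise on the resulting full-measure event. From Lemma \ref{lem:seqdec}, $\sum_{k}\EE\|z_{k+1}-z_k\|^2<\infty$, so by Tonelli $\sum_k\|z_{k+1}-z_k\|^2<\infty$ a.s., which is Part 1 (hence $\|z_k-z_{k-1}\|\to0$ a.s.). Taking full expectation in the Geometric Decay bound \eqref{eq:geo} and summing the resulting recursion $\EE\Upsilon_{k+1}\le(1-\rho)\EE\Upsilon_k+V_\Upsilon(\EE\|z_{k+1}-z_k\|^2+\EE\|z_k-z_{k-1}\|^2)$, whose forcing term is summable, gives $\sum_k\EE\Upsilon_k<\infty$, hence $\Upsilon_k\to0$ a.s.; since $\Upsilon_k=\sum_{i=1}^s\|v_k^i\|^2$ and $\Gamma_k=\sum_{i=1}^s\|v_k^i\|$ with $s$ fixed, Cauchy--Schwarz gives $\Gamma_k\le\sqrt s\,\Upsilon_k^{1/2}\to0$ a.s. Combining $\sum_k\EE\Upsilon_k<\infty$ with the MSE bound \eqref{eq:varreduc} and summing shows $\sum_k\EE\big[\|\tnablax(x_k,y_k)-\nablax F(x_k,y_k)\|^2+\|\tnablay(x_{k+1},y_k)-\nablay F(x_{k+1},y_k)\|^2\big]<\infty$, so the stochastic-gradient error vanishes a.s. along the full sequence.

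I would handle the ``direct'' statements next. For Part 2, taking full expectation in \eqref{eq:psidec} shows $\{\EE\Psi_k\}$ is non-increasing and bounded below by $\underline{\Phi}$, hence convergent to some $\Phi^\star\in[\underline{\Phi},\infty)$; since $\EE\Upsilon_k\to0$ and $\EE\|z_k-z_{k-1}\|^2\to0$, subtracting these vanishing contributions from $\EE\Psi_k$ gives $\EE\Phi(z_k)\to\Phi^\star$. For Part 3, take full expectation in Lemma \ref{lem:subbound}: by Jensen $\EE\|z_k-z_{k-1}\|\le(\EE\|z_k-z_{k-1}\|^2)^{1/2}\to0$, and $\EE\Gamma_{k-1}\to0$ (item 3 of Definition \ref{def:varred}), so $\EE\dist(0,\partial\Phi(z_k))\le\EE\|(A_x^k,A_y^k)\|\to0$ since $(A_x^k,A_y^k)\in\partial\Phi(z_k)$. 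Parts 5 and 6 are purely topological: $\dist(z_k,\omega(z_0))\to0$ holds for any bounded sequence (any offending subsequence has a sub-subsequential limit in $\omega(z_0)$, a contradiction), $\omega(z_0)$ is closed and bounded hence compact, and, because $\|z_{k+1}-z_k\|\to0$ a.s., $\omega(z_0)$ is connected --- a bounded asymptotically regular sequence cannot have its limit set split into two disjoint non-empty compact pieces without infinitely many iterates accumulating in the corridor between them.

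The heart of the argument, and the main obstacle, is the pathwise continuity claim underlying Parts 4 and 7: on the full-measure event above, for every $z^\star\in\omega(z_0)$ and every subsequence $z_{k_\ell}\to z^\star$, $\Phi(z_{k_\ell})\to\Phi(z^\star)$. Lower semicontinuity gives $\liminf_\ell\Phi(z_{k_\ell})\ge\Phi(z^\star)$ for free; for the reverse bound I would test the prox-optimality of $x_{k_\ell}$ against $u=x^\star$ and of $y_{k_\ell}$ against $u=y^\star$ and pass to the limit, using $(x_{k_\ell-1},y_{k_\ell-1})\to(x^\star,y^\star)$ (Part 1), the boundedness of the step-sizes and of $\nabla F$ on bounded sets, and the a.s. vanishing of the stochastic-gradient error along the subsequence (which makes the two squared-norm terms in the prox inequality share a common limit); this yields $\limsup_\ell J(x_{k_\ell})\le J(x^\star)$, hence $J(x_{k_\ell})\to J(x^\star)$, similarly $R(y_{k_\ell})\to R(y^\star)$, and with $F$ continuous, $\Phi(z_{k_\ell})\to\Phi(z^\star)$. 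Part 4 then follows from non-emptiness (Bolzano--Weierstrass) plus the closed-graph property of the limiting subdifferential applied to $(z_{k_\ell},(A_x^{k_\ell},A_y^{k_\ell}))$, where $\|(A_x^{k_\ell},A_y^{k_\ell})\|\to0$ a.s. via the decomposition in Lemma \ref{lem:subbound} together with $\|z_k-z_{k-1}\|\to0$ and the error $\to0$; and Part 7 follows because, on this event, $\Psi_k$ converges a.s.\ (supermartingale convergence lemma with $X_k=\Psi_k-\underline{\Phi}$, $Y_k=Z\|z_k-z_{k-1}\|^2$), hence $\Phi(z_k)$ converges a.s.\ to a value equal to $\Phi(z^\star)$ for every $z^\star\in\omega(z_0)$, whose expectation is $\Phi^\star$ by $L^1$-convergence of the uniformly bounded $\Psi_k$. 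The genuinely delicate point is that the error and the sequences $\Upsilon_k,\Gamma_k$ must vanish \emph{almost surely}, not merely in $L^1$, so that the prox-optimality limit can be taken along the realization-dependent subsequence that realizes a given limit point --- precisely what the summability-plus-geometric-recursion bookkeeping in the first paragraph is for, and why the Convergence-of-Estimator condition in Definition \ref{def:varred} is phrased as it is.
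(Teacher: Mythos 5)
Your proof follows essentially the same route as the paper's: Lemma~\ref{lem:seqdec} gives the a.s.\ summability of $\|z_{k+1}-z_k\|^2$, Lemma~\ref{lem:subbound} plus the vanishing of $\EE\Gamma_k$ drives Part~3, the prox-optimality test against $(x^\star,y^\star)$ along a converging subsequence gives $\Phi(z_{k_q})\to\Phi(z^\star)$ and hence Parts~4 and~7, and Parts~5--6 are the standard topological facts for a bounded, asymptotically regular sequence. One clean variation you make is extracting Part~1 by Tonelli directly from $\sum_k\EE\|z_{k+1}-z_k\|^2<\infty$ (the paper instead runs the conditional inequality from Lemma~\ref{lem:seqdec} through the supermartingale lemma), and then re-deriving $\sum_k\EE\Upsilon_k<\infty$ (hence $\Upsilon_k,\Gamma_k\to0$ a.s.) by summing the geometric-decay recursion; this is correct and in fact stronger than the paper uses, since the paper only invokes the ``Convergence of Estimator'' assumption to get $\EE\Upsilon_k\to0,\ \EE\Gamma_k\to0$. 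The one place your narration is imprecise is the parenthetical in the prox-optimality step: the two quadratic terms $\|x_{k_q+1}-x_{k_q}\|^2$ and $\|x^\star-x_{k_q}\|^2$ both vanish because of Part~1 and the subsequence choice, independently of the gradient error; the role of the error term is only to show $\langle x^\star - x_{k_q},\tnablax(x_{k_q},y_{k_q})-\nablax F(x_{k_q},y_{k_q})\rangle\to0$, for which it actually suffices that the error stays a.s.\ bounded along the bounded iterate sequence (your a.s.\ vanishing is sufficient but more than needed). This does not affect correctness, since you have established both facts, but the attribution is worth tightening.
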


\begin{remark}
The boundedness of $z_k$ is also imposed in the original PALM \cite{bolte2014proximal} and asynchronous PALM \cite{davisfull}, it is satisfied automatically if, for instance, each regularizer has bounded domain.
\end{remark}

\begin{proof}
    By Lemma \ref{lem:seqdec}, we have
    \begin{equation*}
        \E \Psi_{k+1} + \mathcal{O}\big( \|z_k - z_{k-1}\|^2  \big) \le \Psi_k.
    \end{equation*}
    The supermartingale convergence theorem implies that $\sum_{k = 1}^\infty \|z_k - z_{k-1}\|^2 < +\infty$ a.s., and it follows that $\|z_k - z_{k-1}\| \to 0$ a.s. This proves Claim 1.

    The supermartingale convergence theorem also ensures $\Psi_k$ converges a.s.\! to a finite, positive random variable. Because $\|z_k - z_{k-1}\| \to 0$ a.s. and $\tnabla$ is variance-reduced so $\EE \Upsilon_k \to 0$, we can say $\lim_{k\to \infty} \EE \Psi_k = \lim_{k\to \infty} \EE \Phi(z_k) \in [\underline{\Phi}, \infty)$, implying Claim 2.

    Claim 3 holds because, by Lemma \ref{lem:subbound},
    \begin{equation*}
        \EE \|(A_x^k, A_y^k)\| \le p \EE [\| z_k - z_{k-1} \| + \|z_{k-1} - z_{k-2}\|] + \EE \Gamma_{k-1}.
    \end{equation*}
    We have that $\EE \|z_k - z_{k-1}\| \to 0$ and $\EE \Gamma_k \to 0$. This ensures that $\EE \|(A_x^k, A_y^k)\| \to 0$.

    To prove Claim 4, suppose $z^\star = (x^\star,y^\star)$ is a limit point of the sequence $\{z_k\}_{k=0}^\infty$ (a limit point must exist because we suppose the sequence $\{z_k\}_{k=0}^\infty$ is bounded). This means there exists a subsequence $z_{k_q}$ satisfying $\lim_{q \to \infty} z_{k_q} \to z^\star$. Because $R$ and $J$ are lower semicontinuous,
    \begin{equation}
    \label{eq:liminf}
        \liminf_{q \to \infty} R(x_{k_q}) \ge R(x^\star) \qquad \textnormal{and} \qquad \liminf_{q \to \infty} J(x_{k_q}) \ge J(x^\star).
    \end{equation}
    By the update rule for $x_{k+1}$,
    \begin{equation*}
        x_{k+1} \in \argmin_x \big\{ \langle x - x_k, \tnablax (x_k,y_k) \rangle + \tfrac{1}{2 \gmaxk} \|x - x_k\|^2 + R(x) \big\}.
    \end{equation*}
    Letting $x = x^\star$,
    \begin{equation*}
        \begin{aligned}
            & \langle x_{k+1} - x_k, \tnablax (x_k,y_k) \rangle + \tfrac{1}{2 \gmaxk} \|x_{k+1} - x_k\|^2 + R(x_{k+1}) \\
            &\le \langle x^\star - x_k, \nablax F(x_k,y_k) \rangle + \langle x^\star - x_k, \tnablax (x_k,y_k) - \nablax F(x_k,y_k) \rangle + \tfrac{1}{2 \gmaxk} \|x^\star - x_k\|^2 + R(x^\star) .
        \end{aligned}
    \end{equation*}
    Setting $k = k_q$ and taking the limit $q \to \infty$,
    \begin{equation*}
    \begin{aligned}
        \limsup_{q \to \infty} R(x_{k_q + 1})
        &\le \limsup_{q \to \infty} \langle x^\star - x_{k_q}, \nablax F(x_{k_q},y_{k_q}) \rangle \\
        & \qquad  + \langle x^\star - x_{k_q}, \tnablax (x_{k_q},y_{k_q}) - \nablax F(x_{k_q},y_{k_q}) \rangle + \tfrac{1}{2 \gmaxk} \|x^\star - x_{k_q}\|^2 + R(x^\star).
    \end{aligned}
    \end{equation*}
    Because $x_{k_q} \to x^\star$, we can say $\limsup_{q \to \infty} R(x_{k_q + 1}) \le R(x^\star)$, which, together with equation \eqref{eq:liminf}, implies $R(x_{k_q + 1}) \to R(x^\star)$. The same argument holds for $J$ and $y_k$, and it follows that
    \begin{equation*}
        \lim_{q \to \infty} \Phi(x_{k_q}, y_{k_q}) = \Phi(x^\star,y^\star).
    \end{equation*}
    Claim 3 ensures that $(x^\star,y^\star)$ is a critical point of $\Phi$ because $\EE \dist(0,\partial \Phi(z^\star)) \to 0$ as $k \to \infty$ and $\partial \Phi(x^\star,y^\star)$ is closed. Claims 5 and 6 hold for any sequence satisfying $\|z_k - z_{k-1}\| \to 0$ a.s. (this fact is used in the same context in \cite[Remark 5]{bolte2014proximal} and \cite[Remark 4.1]{davisfull}). Finally, we must show that $\Phi$ has constant expectation over $\omega(z_0)$. From Claim 2, we have  $\EE \Phi(z_k) \to \Phi^\star$ which implies $\EE \Phi(z_{k_q}) \to \Phi^\star$ for every subsequence $\{z_{k_q}\}_{q=0}^\infty$ converging to some $z^\star \in \omega(z_0)$. In the proof of Claim 4, we show that $\Phi(z_{k_q}) \to \Phi(z^\star)$, so $\EE \Phi(z^\star) = \Phi^\star$ for all $z^\star \in \omega(z_0)$.
\end{proof}

The following lemma is analogous to the Uniformized \KL Property \cite{bolte2014proximal}. It is a slight generalization of the \KL property showing that $z_k$ eventually enters a region of $\overline{z}$ for some $\overline{z}$ satisfying $\Phi(\overline{z}) = \Phi(z^*)$, and in this region, the \KL inequality holds.

\begin{lemma}
\label{lem:exkl}
    Assume the conditions of Lemma \ref{lem:4case} hold and that $z_k$ is not a critical point of $\Phi$ after a finite number of iterations.
    Let $\Phi$ be a semi-algebraic function satisfying KL property with exponent $\theta$. Then there exists an index $m$ and a desingularizing function $\phi$ so that the following bound holds:
    \begin{equation*}
        \phi'(\EE [\Phi(z_k) - \Phi_k^\star] ) \EE \dist \big(0, \partial \Phi(z_k) \big) \ge 1 \qquad \forall k > m,
    \end{equation*}
    where $\Phi_k^\star$ is a non-decreasing sequence converging to $\EE \Phi(z^\star)$ for some $z^\star \in \omega(z_0)$.
\end{lemma}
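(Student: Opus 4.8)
The plan is to reduce the claim to the \emph{uniformized \KL property} of \cite{bolte2014proximal} combined with the structural facts assembled in Lemma~\ref{lem:4case}, and then to lift the resulting pathwise inequality to a deterministic statement in expectation. Two preliminary observations drive the proof. First, semi-algebraicity of $\Phi$ with \KL exponent $\theta$ lets us take every desingularizing function in the form $\phi(s)=c\,s^{1-\theta}$, so $\phi$ is concave, $\phi'$ is positive and non-increasing, and $(\phi')^{-1}$ is an explicit power function; this explicit form is what ultimately lets the argument close. Second, Lemma~\ref{lem:4case} gives that $\omega(z_0)$ is a.s.\ non-empty, compact and connected, and --- since $\Psi_k$ converges a.s.\ by supermartingale convergence while $\Psi_k-\Phi(z_k)\to0$ a.s. --- the \emph{full} sequence $\Phi(z_k)$ converges a.s.\ to a random value $\Phi^\star_\omega$; hence $\Phi\equiv\Phi^\star_\omega$ on $\omega(z_0)$ with $\EE\Phi^\star_\omega=\Phi^\star\defeq\lim_k\EE\Phi(z_k)=\EE\Phi(z^\star)$ for any $z^\star\in\omega(z_0)$.

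Next I would invoke the uniformized \KL property. To keep the neighbourhood radius $\epsilon$ and the desingularizing constant \emph{deterministic} rather than dependent on the random set $\omega(z_0)$, I would apply it on a fixed compact set that contains all iterates --- available because $\{z_k\}$ is assumed bounded --- intersected with a neighbourhood of the critical set of $\Phi$. This yields $\epsilon,\eta>0$ and $\phi(s)=c\,s^{1-\theta}$ such that, whenever $\dist(z,\omega(z_0))<\epsilon$ and $0<\Phi(z)-\Phi^\star_\omega<\eta$,
\begin{equation*}
  \phi'\big(\Phi(z)-\Phi^\star_\omega\big)\,\dist\big(0,\partial\Phi(z)\big)\ge 1 .
\end{equation*}
Since $\dist(z_k,\omega(z_0))\to0$ and $\Phi(z_k)\to\Phi^\star_\omega$ a.s., and since $z_k$ is by hypothesis never critical (this also rules out the degenerate case $\Phi(z_k)=\Phi^\star_\omega$, handled separately by a direct finite-termination argument), this inequality holds for all $k$ past an a.s.\ finite index; equivalently, $(\Phi(z_k)-\Phi^\star_\omega)^+\le\big(c(1-\theta)\,\dist(0,\partial\Phi(z_k))\big)^{1/\theta}$ eventually a.s.

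To obtain a deterministic index I would take expectations of the pathwise bound, using boundedness of the iterates to show that the contribution of the shrinking event ``$z_k$ not yet in the \KL region'' vanishes, and using $\dist(0,\partial\Phi(z_k))\to0$ a.s.\ (from Lemma~\ref{lem:subbound} and Claim~1 of Lemma~\ref{lem:4case}) to linearise the convex map $t\mapsto t^{1/\theta}$ over the relevant shrinking range. I would then \emph{define} the floor $\Phi_k^\star$ to make the target inequality close: anchor it to the Lyapunov value $\EE\Psi_k$ of Lemma~\ref{lem:seqdec}, which is non-increasing and tends to $\Phi^\star$, so that $\EE[\Phi(z_k)-\Phi_k^\star]$ is non-negative, tends to $0$, and dominates $(\phi')^{-1}\!\big(1/\EE\dist(0,\partial\Phi(z_k))\big)$ for large $k$, while $\Phi_k^\star$ is non-decreasing and converges to $\EE\Phi(z^\star)$ (replacing the candidate by its running maximum if necessary). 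With such a $\Phi_k^\star$, and a deterministic $m$ large enough that for $k>m$ the argument of $\phi'$ lies in $(0,\epsilon)$, monotonicity of $\phi'$ then yields $\phi'(\EE[\Phi(z_k)-\Phi_k^\star])\,\EE\dist(0,\partial\Phi(z_k))\ge1$ for all $k>m$.

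The main obstacle is precisely this interchange of the (pathwise, random-level) \KL inequality with the expectation operator. It forces one to: (i) obtain \KL constants uniform over realizations, which is where boundedness of $\{z_k\}$ enters; (ii) control the expectation over the vanishing-probability event on which the iterate has not yet entered the \KL region; and (iii) engineer $\Phi_k^\star$ to be simultaneously non-decreasing, convergent to $\EE\Phi(z^\star)$, and large enough that $\phi'(\EE[\Phi(z_k)-\Phi_k^\star])\ge 1/\EE\dist(0,\partial\Phi(z_k))$ --- which is exactly where the explicit power form of $\phi$ and the monotone Lyapunov sequence $\Psi_k$ carry the load. Everything else --- the \KL input and the limit-set bookkeeping --- is supplied off the shelf by Lemma~\ref{lem:4case} and standard \KL theory.
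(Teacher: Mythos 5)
Your plan diverges from the paper's proof at its core step, and the route you sketch has a gap that I do not think can be closed as written. The paper does not apply the KL inequality along sample paths and then take expectations; it sidesteps the randomness entirely by enumerating the finitely many possible realizations $(z_k^1,\dots,z_k^{\overline{n}^k})$ of $z_k$ after $k$ iterations (where $\overline{n}=\binom{n}{b}$), rewriting $\EE\Phi(z_k)=\tfrac{1}{\overline{n}^k}\sum_i\Phi(z_k^i)$ as a \emph{deterministic} separable function on the product space, invoking a KL calculus result to say this separable sum is again KL with exponent $\theta$, and applying the KL inequality \emph{directly} to this lifted function at a reference tuple $(\overline{z}_k^1,\dots)$ whose averaged value defines $\Phi_k^\star$. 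Jensen's inequality only enters at the last moment, applied to the convex map $x\mapsto\dist(0,x)$, and in the favourable direction $\dist(0,\EE\,\partial\Phi(z_k))\le\EE\dist(0,\partial\Phi(z_k))$.

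The gap in your version is the pass from the pathwise KL inequality to the inequality in expectation. Writing the pathwise bound as $\Phi(z_k)-\Phi^\star_\omega\le\big(C\dist(0,\partial\Phi(z_k))\big)^{1/\theta}$ eventually a.s.\ and taking expectations yields $\EE[\Phi(z_k)-\Phi^\star_\omega]\le\EE\big(C\dist(0,\partial\Phi(z_k))\big)^{1/\theta}$, whereas the target requires $\EE[\Phi(z_k)-\Phi_k^\star]\le\big(C\,\EE\dist(0,\partial\Phi(z_k))\big)^{1/\theta}$. Since $t\mapsto t^{1/\theta}$ is convex for $\theta\in(0,1]$, Jensen gives $\EE\, t^{1/\theta}\ge(\EE\, t)^{1/\theta}$, which is the wrong direction. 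Your suggestion to ``linearise $t\mapsto t^{1/\theta}$ over the relevant shrinking range'' does not supply the missing reverse inequality: a comparison of the form $\EE\, t^{1/\theta}\le c_0(\EE\, t)^{1/\theta}$ fails for general random variables and needs a reverse-H\"older-type hypothesis that you have not exhibited. Compounding this, $\Phi^\star_\omega$ is a random variable while the target's $\Phi_k^\star$ must be deterministic and non-decreasing with limit $\EE\Phi(z^\star)$; any deterministic $\Phi_k^\star\le\EE\Phi^\star_\omega$ only enlarges $\EE[\Phi(z_k)-\Phi_k^\star]$, hence shrinks $\phi'(\cdot)$, again in the wrong direction. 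The paper's product-space lift is precisely what avoids this Jensen obstruction, and the deterministic uniformized KL lemma you invoke does not substitute for it.
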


\begin{proof}
    First, we show that $\EE \Phi(z_k)$ satisfies the KL property. Recall that $b$ is the mini-batch size. Let $\overline{n} = \binom{n}{b}$ be the number of possible gradient estimates in one iteration, and let $\{z_k^i\}_{i=1}^{\overline{n}^k}$ be the set of possible values for $z_k$. It is clear that $\EE \Phi$ is a function of $\{z_k^i\}_{i=1}^{\overline{n}^k}$:
    \begin{equation*}
        \EE \Phi(z_k) = \tfrac{1}{\overline{n}^k} \sum\nolimits_{i=1}^{\overline{n}^k} \Phi(z_k^i).
    \end{equation*}
    Because $\EE \Phi(z_k)$ can be written as $\sum_i f_i(x_i)$ where $f_i$ are KL functions with exponent $\theta$, $\EE \Phi(z_k)$ (as a function of $\{z_k^i\}_{i=1}^{\overline{n}^k}$) is also KL with exponent $\theta$ \cite[Theorem 3.3]{klcalc}.
    Hence, $\EE \Phi$ satisfies the KL inequality at every point in its domain. Therefore, for every point $(z_k^1, \cdots,z_k^{\overline{n}^k})$ in a neighborhood $U_k$ of $(\overline{z}_k^1, \overline{z}_k^2, \cdots,\overline{z}_k^{\overline{n}^k})$ and satisfying
    \begin{equation}
    \label{eq:locmin}
    \tfrac{1}{\overline{n}^k} \sum\nolimits_{i=1}^{\overline{n}^k} \Phi(\overline{z}_k^i) < \tfrac{1}{\overline{n}^k} \sum\nolimits_{i=1}^{\overline{n}^k} \Phi(z_k^i) < \tfrac{1}{\overline{n}^k} \sum\nolimits_{i=1}^{\overline{n}^k} \Phi(\overline{z}_k^i) + \epsilon_k
    \eeq
    for some $\epsilon_k > 0$, the \KL inequality holds:
    \begin{equation*}
        \phi' \Big( \tfrac{1}{\overline{n}^k} \sum\nolimits_{i=1}^{\overline{n}^k} \Phi(z_k^i) - \tfrac{1}{\overline{n}^k} \sum\nolimits_{i=1}^{\overline{n}^k} \Phi(\overline{z}_k^i) \Big) \dist \Big(0, \tfrac{1}{\overline{n}^k} \sum\nolimits_{i=1}^{\overline{n}^k} \partial \Phi(z_k^i) \Big) \ge 1.
    \end{equation*}
    There always exists a choice of $(\overline{z}_k^1, \overline{z}_k^2, \cdots,\overline{z}_k^{\overline{n}^{k}})$ satisfying \eqref{eq:locmin} unless $\EE \Phi(z_k)$ is a local minimum.

    Let $\Phi^\star_k \defeq \tfrac{1}{\overline{n}^k} \sum_{i=1}^{\overline{n}^k} \Phi(\overline{z}_k^i)$. Lemma \ref{lem:4case}, Claim 1 implies that $\|z_{k+1} - z_k\| \to 0$ a.s., and Claim 5 implies $\dist(z_k, \omega(z_0)) \to 0$ a.s. These results show that there exists an index $m$ such that for all $k \ge m$, we can choose $\overline{z}^i_k$ so that $\Phi_k^\star$ is non-decreasing and converging to $\EE \Phi(z^\star)$. Hence, we have
    \begin{equation*}
        \phi'(\EE [\Phi(z_k) - \Phi^\star_k] ) \dist \big(0, \EE \partial \Phi(z_k) \big) \ge 1 \qquad \forall k > m,
    \end{equation*}
    The desired inequality follows from Jensen's inequality and the convexity of $x \mapsto \dist(0,$ $x)$.
\end{proof}

We now show that the iterates of SPRING have finite length in expectation.

\begin{lemma}[Finite Length]
\label{lem:finitelength}
    Suppose $\Phi$ is a semi-algebraic function with KL exponent $\theta \in [0,1)$. Let $\{ z_k \}_{k=0}^\infty$ be a bounded sequence of iterates of SPRING using a variance-reduced gradient estimator and step-sizes satisfying the hypotheses of Lemma \ref{lem:4case}.
    Either $z_k$ is a critical point after a finite number of iterations, or $\{ z_k \}_{k=0}^\infty$ satisfies the finite length property in expectation:
    \begin{equation*}
        \sum\nolimits_{k=0}^\infty \EE \|z_{k+1} - z_k\| < \infty,
    \end{equation*}
    and there exists an iteration $m$ so that for all $i > m$,
    \begin{equation*}
    \begin{aligned}
        \sum\nolimits_{k=m}^i \EE \|z_{k+1} - z_k\| + \EE \|z_k - z_{k-1}\| &\le \sqrt{ \EE \|z_m - z_{m-1} \|^2 } + \sqrt{ \EE \|z_{m-1} - z_{m-2} \|^2 } \\
        &\qquad + \tfrac{2 \sqrt{s}}{K_1 \rho} \sqrt{ \EE \Upsilon_{m-1} } + K_3 \Delta_{m,i+1},
    \end{aligned}
    \end{equation*}
    where
    \begin{equation*}
        K_1 \defeq p + 2 \sqrt{s V_\Upsilon} / \rho, \qquad K_2
        \defeq \tfrac{1}{2 \overline{\gamma}_0} - \tfrac{\bar{L}}{2} - \tfrac{3 \sqrt{2}}{4} \sqrt{V_1 + V_\Upsilon / \rho}, \qquad K_3 \defeq \tfrac{2 K_1 (K_2+ Z)}{K_2 Z},
    \end{equation*}
    $p$ is as in Lemma \ref{lem:subbound}, and $\Delta_{p,q} \defeq \phi(\EE [\Psi_p - \Psi^\star_p]) - \phi(\EE [ \Psi_q - \Psi^\star_q ]) ]$.
\end{lemma}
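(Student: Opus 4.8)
The plan is to adapt the classical Kurdyka--\L ojasiewicz convergence argument used for PALM \cite{bolte2014proximal} to the stochastic setting, working with the Lyapunov function $\Psi_k$ of \eqref{eq:psi} in place of $\Phi(z_k)$ and simultaneously tracking the auxiliary sequence $\Upsilon_k$ through its geometric decay \eqref{eq:geo}. Write $W \defeq \sqrt{V_1 + V_\Upsilon/\rho}$, $d_k \defeq \sqrt{\EE \|z_{k+1} - z_k\|^2}$, and $u_k \defeq \sqrt{\EE \Upsilon_k}$. By Jensen's inequality $\EE\|z_{k+1}-z_k\| \le d_k$, so it suffices to bound $\sum_k d_k$; note also that the claimed right-hand side is already expressed in terms of $d_{m-1} = \sqrt{\EE\|z_m - z_{m-1}\|^2}$, $d_{m-2}$, and $u_{m-1}$. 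If $z_k$ becomes critical after finitely many iterations the sequence is trivially finite-length, so I assume otherwise, which is exactly the hypothesis of Lemma~\ref{lem:exkl}.

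First I would assemble the two ingredients of the KL recursion. (i) \emph{Sufficient decrease}: taking full expectation in \eqref{eq:psidec} and using $\overline{\gamma}_k \le \overline{\gamma}_0$ together with the step-size hypothesis gives $\EE\Psi_{k+1} \le \EE\Psi_k - K_2 d_k^2 - Z d_{k-1}^2$ with $Z = W/(2\sqrt2)$; since the sequence $\Psi_k^\star$ from Lemma~\ref{lem:exkl} is non-decreasing, this rearranges to $\EE[\Psi_k - \Psi_k^\star] - \EE[\Psi_{k+1} - \Psi_{k+1}^\star] \ge K_2 d_k^2 + Z d_{k-1}^2$. (ii) \emph{Subgradient bound for $\Psi$}: viewing $\Psi_k$ as $\Phi(z_k)$ plus the smooth terms $\tfrac{1}{2\rho\sqrt2 W}\Upsilon_k$ and $\tfrac{W}{\sqrt2}\|z_k - z_{k-1}\|^2$, its subdifferential is that of $\Phi$ perturbed by the gradients of those terms, whose norms are $\mathcal{O}(\|z_k - z_{k-1}\|)$ and $\mathcal{O}(\sqrt{\Upsilon_k})$; combining with Lemma~\ref{lem:subbound}, the Cauchy--Schwarz bound $\Gamma_{k-1} \le \sqrt{s\,\Upsilon_{k-1}}$, and Jensen's inequality, I obtain $\EE\dist(0,\partial\Psi_k) \le K_1(d_{k-1} + d_{k-2}) + c_0(u_{k-1} + u_k)$ for a constant $c_0$, with the constants identified with those in the statement.

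Then I would run the concavity/telescoping step. Applying the argument of Lemma~\ref{lem:exkl} to the KL function $\Psi$ --- which has the same exponent $\theta$ as $\Phi$ because adding the quadratic and linear terms preserves the exponent \cite{klcalc}, and because $\EE\Upsilon_k \to 0$ and $\|z_k - z_{k-1}\| \to 0$ force $\EE\Psi_k$ and $\EE\Phi(z_k)$ to share a limit --- there is an index $m$ and a desingularizing $\phi$ with $\phi'(\EE[\Psi_k - \Psi_k^\star])\,\EE\dist(0,\partial\Psi_k) \ge 1$ for $k > m$. Concavity of $\phi$ and ingredients (i)--(ii) give $K_2 d_k^2 + Z d_{k-1}^2 \le \big(K_1(d_{k-1}+d_{k-2}) + c_0(u_{k-1}+u_k)\big)\Delta_{k,k+1}$, with $\Delta_{k,k+1} \defeq \phi(\EE[\Psi_k-\Psi_k^\star]) - \phi(\EE[\Psi_{k+1}-\Psi_{k+1}^\star])$. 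Taking square roots, bounding $\sqrt{K_2 d_k^2 + Z d_{k-1}^2} \ge \sqrt{\min(K_2,Z)/2}\,(d_k + d_{k-1})$, and using Young's inequality $\sqrt{XY} \le \tfrac{X}{2\lambda} + \tfrac{\lambda}{2}Y$ with a free parameter $\lambda$, I sum from $k = m$ to $i$: the $\Delta_{k,k+1}$ telescope to $\Delta_{m,i+1}$; the $d$-sums on the right exceed those on the left only by the boundary terms $d_{m-1}, d_{m-2}$; and the $u$-sums are controlled by summing the geometric recursion $u_{k+1} \le \sqrt{1-\rho}\,u_k + \sqrt{V_\Upsilon}(d_k + d_{k-1})$ --- obtained from \eqref{eq:geo} by Jensen and subadditivity of $\sqrt{\,\cdot\,}$ --- which, since $1 - \sqrt{1-\rho} \ge \rho/2$, yields $\sum_k u_k \le \tfrac{2}{\rho}u_{m-1} + \tfrac{2\sqrt{V_\Upsilon}}{\rho}\sum_k(d_k + d_{k-1})$ up to boundary terms. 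Choosing $\lambda$ large enough that the residual coefficients of $\sum_k(d_k+d_{k-1})$ on the right are strictly below $\sqrt{\min(K_2,Z)/2}$, I absorb all $d$-terms into the left, which gives $\sum_{k=m}^i(\EE\|z_{k+1}-z_k\| + \EE\|z_k - z_{k-1}\|) \le \sum_{k=m}^i(d_k + d_{k-1}) \le \sqrt{\EE\|z_m - z_{m-1}\|^2} + \sqrt{\EE\|z_{m-1} - z_{m-2}\|^2} + \tfrac{2\sqrt s}{K_1 \rho}\sqrt{\EE\Upsilon_{m-1}} + K_3 \Delta_{m,i+1}$, with $K_3$ the constant produced by this bookkeeping; letting $i \to \infty$ and using $\Delta_{m,i+1} \to \phi(\EE[\Psi_m - \Psi_m^\star]) < \infty$ (since $\EE[\Psi_{i+1} - \Psi_{i+1}^\star] \to 0$ and $\phi$ is continuous with $\phi(0) = 0$) proves $\sum_k \EE\|z_{k+1} - z_k\| < \infty$.

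The main obstacle is the simultaneous control of the coupled sequences $\{d_k\}$ and $\{u_k\}$: the subgradient bound for $\Psi$ feeds $u_k$ into the recursion while the decay estimate for $\Upsilon$ feeds $d_k$ back, so neither can be bounded in isolation --- one must sum both inequalities together and verify that the resulting linear system ``closes'', i.e.\ that a single choice of the Young's parameter $\lambda$ (together with the earlier choice $Z = W/(2\sqrt2)$ in Lemma~\ref{lem:seqdec}) legitimizes every ``absorb into the left-hand side'' step while still reproducing the exact constants $K_1, K_2, K_3$ quoted in the statement. A secondary difficulty is transporting the KL inequality and the limit-set analysis from $\Phi$ (Lemmas~\ref{lem:4case}--\ref{lem:exkl}) to $\Psi$, in particular checking that $\EE\Psi_k$, written as $\tfrac{1}{\overline{n}^k}\sum_i \Psi_k^i$, remains KL with exponent $\theta$ so that the construction of the non-decreasing sequence $\Psi_k^\star \to \EE\Phi(z^\star)$ carries over verbatim.
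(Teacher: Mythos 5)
Your proposal takes a genuinely different route from the paper, and that route has an unresolved gap at its center. You propose to apply the Kurdyka--\L ojasiewicz inequality \emph{directly to the Lyapunov function $\Psi$}, asserting that $\Psi$ is KL with the same exponent $\theta$ as $\Phi$ and that one can bound $\dist(0,\partial\Psi_k)$ via Lemma~\ref{lem:subbound} plus the ``gradients'' of the extra quadratic and $\Upsilon_k$ terms. This is precisely the alternative strategy the paper explicitly declines in the remark following Lemma~\ref{lem:finitelength}: ``Related works \cite{davisfull} solve this problem by requiring an analog of $\Psi_k$ to be KL, but this is not a straightforward approach for SPRING because of the complex variance bounds required to analyze variance-reduced gradient estimators.'' The concrete difficulties are: (i) $\Psi_k$ is not a function of $z_k$ alone --- through $\Upsilon_k = \sum_i \|v_k^i\|^2$ it depends on the estimator's full random history (e.g.\ $\varphi_k^i$, $\xi_k^i$ for SAGA), so to treat $\Psi$ as a KL function one must work in an extended variable space $(z,z',v^1,\dots,v^s,\dots)$ and prove semi-algebraicity with exponent $\theta$ there; you cite \cite{klcalc} but do not verify the required hypotheses. (ii) The claimed subgradient bound for $\partial\Psi$ is asserted, not derived. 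If the $v_k^i$ are taken as free variables one avoids differentiating $\nabla F_i$ (so no Hessian problem), but then one needs a version of Lemma~\ref{lem:subbound} on the extended space, and one needs the critical set of $\Psi$ on that space to be reached --- none of which is established. (iii) You acknowledge the ``secondary difficulty'' of transporting Lemmas~\ref{lem:4case}--\ref{lem:exkl} from $\Phi$ to $\Psi$, but this is in fact the primary obstacle, and you leave it unaddressed.

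By contrast, the paper's proof never treats $\Psi$ as a KL function. It uses the KL inequality only for $\Phi$ (from Lemma~\ref{lem:exkl}), arriving at $\phi_0'(\EE[\Phi(z_k) - \Phi_k^\star])\,C_k \ge 1$ where $C_k$ already packages both the subgradient bound \eqref{eq:subnormbound} and the geometric decay \eqref{eq:sqrtgeo} into a single telescoping quantity. The key move --- and the step your proposal lacks a substitute for --- is the perturbation argument between \eqref{eq:desi} and the subsequent display: because $\theta \ge 1/2$, the extra denominator term
\[
\big( \EE[ \tfrac{1}{2\rho\sqrt{2(V_1+V_\Upsilon/\rho)}}\Upsilon_k + \tfrac{\sqrt{V_1+V_\Upsilon/\rho}}{\sqrt2}\|z_k-z_{k-1}\|^2 ] \big)^\theta
\]
is eventually $\mathcal{O}(C_k)$ (since $C_k \gtrsim \sqrt{\EE\|z_k-z_{k-1}\|^2} + \sqrt{\EE\Upsilon_{k-1}}$ and these quantities tend to zero). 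This lets the paper replace $\Phi(z_k)-\Phi_k^\star$ by $\Psi_k-\Phi_k^\star$ in the denominator at the cost of a rescaled constant $d$ in $\phi(r)=adr^{1-\theta}$, \emph{without} ever claiming the KL property for $\Psi$. Your proposal does not carry out this substitution; it jumps directly to ``$\Psi$ is KL,'' which the paper's authors point out is the difficult direction and which your argument does not justify. The remainder of your bookkeeping (Young's inequality, telescoping $\Delta_{k,k+1}$, summing the geometric recursion for $u_k$) is close in spirit to the paper's, but it rests on the unproved KL inequality for $\Psi$, so the proof as written does not close.
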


\begin{remark}
    Our analysis for SPRING requires $\Phi$ to be semi-algebraic for the finite-length property to hold, but in the analysis of PALM, the finite-length property requires only that $\Phi$ is KL \cite[Thm. 1]{bolte2014proximal}. This difference arises because SPRING does not necessarily decrease the objective every iteration (even in expectation), but PALM does \cite[Lem. 3]{bolte2014proximal}. Instead, we prove that the iterates of SPRING decrease $\Psi_k$ in expectation. Related works \cite{davisfull} solve this problem by requiring an analog of $\Psi_k$ to be KL, but this is not a straightforward approach for SPRING because of the complex variance bounds required to analyze variance-reduced gradient estimators.
\end{remark}

\begin{proof}

    We begin with a proof of Claim 1. If $\theta \in (0,1/2)$, then $\Phi$ satisfies the KL property with exponent $1/2$, so we consider only the case $\theta \in [1/2,1)$. By Lemma \ref{lem:exkl}, there exists a function $\phi_0(r) = a r^{1-\theta}$ such that
    \begin{equation*}
        \phi_0'(\EE [\Phi(z_k) - \Phi(z^\star)] ) \EE \dist \big(0, \partial \Phi_k^\star \big) \ge 1 \qquad \forall k > m.
    \end{equation*}
    Lemma \ref{lem:subbound} provides a bound on $\EE \dist (0, \partial \Phi(z_k) )$.
    \begin{equation}
    \begin{aligned}
    \label{eq:subnormbound}
        \EE \dist \big(0, \partial \Phi(z_k) \big)  \le \EE \|(A_x^k,A_y^k)\|
        & \le p \EE [ \| z_k - z_{k-1} \|  + \|z_{k-1} - z_{k-2}\|] + \EE \Gamma_{k-1} \\
        & \le p ( \sqrt{ \EE \| z_k - z_{k-1} \|^2 } + \sqrt{ \EE \|z_{k-1} - z_{k-2}\|^2 }) + \sqrt{s \EE \Upsilon_{k-1} }.
    \end{aligned}
    \end{equation}
    The final inequality is Jensen's. Because $\Gamma_k = \sum_{i=1}^s \|v_k^i\|$ for some vectors $v_k^i$, we can say $\EE \Gamma_k = \EE \sum_{i=1}^s \|v_k^i\| \le \EE \sqrt{s \sum_{i=1}^s \|v_k^i\|^2 } \le \sqrt{s \EE \Upsilon_k}$. We can bound the term $\sqrt{ \EE \Upsilon_k }$ using \eqref{eq:geo}:
    \begin{equation}
    \begin{aligned}
    \label{eq:sqrtgeo}
        \sqrt{ \EE \Upsilon_k } & \le \sqrt{ (1 - \rho) \EE \Upsilon_{k-1} + V_\Upsilon \EE [ \|z_k - z_{k-1}\|^2 + \|z_{k-1} - z_{k-2}\|^2 ] } \\
        & \le \sqrt{ (1 - \rho) } \sqrt{ \EE \Upsilon_{k-1} } +\sqrt{V_\Upsilon} ( \sqrt{ \EE \|z_k - z_{k-1}\|^2 } + \sqrt{ \EE \|z_{k-1} - z_{k-2}\|^2} ) \\
        & \le \big( 1 - \tfrac{\rho}{2} \big) \sqrt{ \EE \Upsilon_{k-1} } +\sqrt{V_\Upsilon} ( \sqrt{ \EE \|z_k - z_{k-1}\|^2 } + \sqrt{ \EE \|z_{k-1} - z_{k-2}\|^2} ).
    \end{aligned}
    \end{equation}
    The final inequality uses the fact that $\sqrt{1-\rho} = 1 - \rho/2-\rho^2/8 -\cdots$. This allows us to say
    \begin{equation}
    \begin{aligned}
    \label{eq:distbound}
        \EE \dist \big(0, \partial \Phi(z_k) \big)
        &\le K_1 \sqrt{\EE \| z_k - z_{k-1} \|^2} + K_1 \sqrt{ \EE \|z_{k-1} - z_{k-2}\|^2 } + \tfrac{2 \sqrt{s}}{\rho} ( \sqrt{ \EE \Upsilon_{k-1} } - \sqrt{ \EE \Upsilon_k } ),
    \end{aligned}
    \end{equation}
    where $K_1 \defeq p + 2 \sqrt{s V_\Upsilon} / \rho$. Define $C_k$ to be the right side of this inequality:
    \begin{equation*}
        C_k \defeq K_1 \sqrt{ \EE \| z_k - z_{k-1} \|^2 } + K_1 \sqrt{ \EE \|z_{k-1} - z_{k-2}\|^2 } + \tfrac{2 \sqrt{s}}{\rho} ( \sqrt{ \EE \Upsilon_{k-1} } - \sqrt{ \EE \Upsilon_k } ).
    \end{equation*}
    We then have
    \begin{equation}
    \label{eq:psisum}
        \phi_0'( \EE [\Phi(z_k) - \Phi_k^\star] ) C_k \ge 1 \qquad \forall k > m.
    \end{equation}
    By the definition of $\phi_0$, this is equivalent to
    \begin{equation}
    \label{eq:desi}
        \frac{a ( 1 - \theta ) C_k}{( \EE [\Phi(z_k) - \Phi_k^\star] )^{\theta}} \ge 1 \qquad \forall k > m.
    \end{equation}
    We would like the inequality above to hold for $\Psi_k$ rather than $\Phi(z_k)$. Replacing $\EE \Phi(z_k)$ with $\EE \Psi_k$ introduces a term of $\mathcal{O}( (\EE [\|z_k - z_{k-1}\|^2 + \Upsilon_k] )^{\theta})$ in the denominator. We show that inequality \eqref{eq:desi} still holds after this adjustment because these terms are small compared to $C_k$.

    The quantity $C_k \ge \mathcal{O}(\sqrt{\EE \|z_k - z_{k-1}\|^2} + \sqrt{\EE \|z_{k-1} - z_{k-2}\|^2} + \sqrt{\EE \Upsilon_{k-1}})$, and because $\EE \|z_k - z_{k-1}\|^2$, $\EE \Upsilon_k \to 0$, and $\theta \ge 1 / 2$, there exists an index $m$ and a constant $c > 0$ such that
    \begin{equation*}
    \begin{aligned}
        & \Big( \EE \Big[ \tfrac{1}{2 \rho \sqrt{2 (V_1 + V_\Upsilon / \rho)}} \Upsilon_k + \tfrac{\sqrt{V_1 + V_\Upsilon / \rho}}{\sqrt{2}} \|z_k - z_{k-1}\|^2 \Big] \Big)^{\theta} \\
        &\le \mathcal{O} \Big( \big( \EE \left[ \Upsilon_{k-1} + \|z_k - z_{k-1}\|^2 + \|z_{k-1} - z_{k-2}\|^2 \right] \big)^{\theta} \Big)
        \le c  C_k \qquad \forall k > m.
    \end{aligned}
    \end{equation*}
    The first inequality uses \eqref{eq:geo}. Because the terms above are small compared to $C_k$, there exists a constant $+\infty > d > c$ such that
    \begin{equation*}
        \tfrac{a d ( 1 - \theta ) C_k }{( \EE[ \Phi(z_k) - \Phi_k^\star] )^{\theta} + \big( \EE [ \frac{1}{2 \rho \sqrt{2 (V_1 + V_\Upsilon / \rho)}} \Upsilon_k + \frac{\sqrt{V_1 + V_\Upsilon / \rho}}{\sqrt{2}} \|z_k - z_{k-1}\|^2 ] \big)^{\theta}} \ge 1,
    \end{equation*}
    for all $k > m$. Using the fact that $(a + b)^\theta \le a^\theta + b^\theta$ for all $a,b \ge 0$ because $\theta \in [1/2,1)$, we have
    \begin{equation*}
    \begin{aligned}
        \tfrac{a d ( 1 - \theta ) C_k}{( \EE [\Psi_k - \Psi^\star] )^{\theta}}
        & = \tfrac{a d ( 1 - \theta ) C_k }{\big( \EE \big[ \Phi(z_k) - \Phi_k^\star + \tfrac{1}{2 \rho \sqrt{2 (V_1 + V_\Upsilon / \rho)}} \Upsilon_k + \tfrac{\sqrt{V_1 + V_\Upsilon / \rho}}{\sqrt{2}} \|z_k - z_{k-1}\|^2 \big] \big)^{\theta}} \\
        & \ge \tfrac{a d ( 1 - \theta ) C_k }{\left(\EE \left[ \Phi(z_k) - \Phi_k^\star \right]\right)^{\theta} + \big( \EE\big[ \tfrac{1}{2 \rho \sqrt{2 (V_1 + V_\Upsilon / \rho)}} \Upsilon_k + \tfrac{\sqrt{V_1 + V_\Upsilon / \rho}}{\sqrt{2}} \|z_k - z_{k-1}\|^2 \big] \big)^{\theta}}
        \ge 1 \qquad \forall k > m.
    \end{aligned}
    \end{equation*}
    Therefore, with $\phi(r) = a d r^{1-\theta}$,
    \begin{equation*}
        \phi'(\EE [\Psi_k - \Phi_k^\star] ) C_k \ge 1 \qquad \forall k > m.
    \end{equation*}
    By the concavity of $\phi$,
    \begin{equation}
    \begin{aligned}
    \label{eq:concav}
        \phi( \EE [ \Psi_k - \Phi_k^\star ] ) - \phi( \EE [\Psi_{k+1} - \Phi_{k+1}^\star] ) & \ge \phi'(\EE [\Psi_k - \Phi_k^\star]) (\EE [\Psi_k - \Phi_k^\star + \Phi_{k+1}^\star - \Psi_{k+1}]) \\
        &\ge \phi'(\EE [\Psi_k - \Phi_k^\star]) (\EE [\Psi_k - \Psi_{k+1}]),
    \end{aligned}
    \end{equation}
    where the last inequality follows from the fact that $\Phi_k^\star$ is non-decreasing. With $\Delta_{p,q} \defeq \phi(\EE [\Psi_p - \Phi^\star_p]) - \phi(\EE [ \Psi_q - \Phi^\star_q ]) ]$, we have shown
    \begin{equation*}
        \Delta_{k,k+1} C_k \ge \EE [\Psi_k - \Psi_{k+1}].
    \end{equation*}
    Using Lemma \ref{lem:seqdec}, we can bound $\EE[ \Psi_k - \Psi_{k+1} ]$ below by both $\EE \|z_{k+1} - z_k\|^2$ and $\EE \|z_k - z_{k-1}\|^2$. Specifically,
    \begin{equation}
    \label{eq:Z}
        \Delta_{k,k+1} C_k \ge Z \EE [\|z_k - z_{k-1}\|^2],
    \end{equation}
    as well as
    \begin{equation}
    \label{eq:K}
        \Delta_{k,k+1} C_k \ge K_2 \EE [\|z_{k+1} - z_k\|^2],
    \end{equation}
    where $K_2 \defeq - \big( \tfrac{\bar{L} (\lambda + 1)}{2} + \tfrac{V_1 + V_\Upsilon / \rho}{\bar{L} \lambda} + Z - \tfrac{1}{2 \overline{\gamma}_0} \big)$
    and $\lambda$ and $Z$ are set as in Lemma \ref{lem:seqdec}. Let us use the first of these inequalities to begin. Applying Young's inequality to \eqref{eq:Z} yields
    \begin{equation}
    \begin{aligned}
    \label{eq:zc}
        2 \sqrt{ \EE \|z_k - z_{k-1}\|^2 } & \le 2 \sqrt{C_k \Delta_{k,k+1} Z^{-1}}
        \le \tfrac{C_k}{2 K_1} + \tfrac{2 K_1 \Delta_{k,k+1}}{Z}
    \end{aligned}
    \end{equation}
    Summing inequality \eqref{eq:zc} from $k = m $ to $k = i$,
    \begin{equation}
        \begin{aligned}
        \label{eq:rewrite}
            2 \sum\nolimits_{k=m}^i \sqrt{ \EE \|z_k - z_{k-1}\|^2 } & \le \sum\nolimits_{k=m}^i \tfrac{C_k}{2 K_1} + \tfrac{2 K_1 \Delta_{m,i+1}}{Z} \\
            & \le \sum\nolimits_{k=m}^i \tfrac{1}{2} \sqrt{\EE \|z_k - z_{k-1}\|^2} + \tfrac{1}{2} \sqrt{ \EE \|z_{k-1} - z_{k-2}\|^2 } \\
            & \qquad - \tfrac{\sqrt{s} }{K_1 \rho} \big( \sqrt{ \EE \Upsilon_i } - \sqrt{ \EE \Upsilon_{m-1} } \big) + \tfrac{2 K_1 \Delta_{m,i+1}}{Z},
        \end{aligned}
    \end{equation}
    Dropping the non-positive term $-\sqrt{ \EE \Upsilon_i }$, this shows that
    \begin{equation*}
        \tfrac{3}{2} \sum\nolimits_{k=m}^i \sqrt{ \EE \|z_k - z_{k-1}\|^2 } \le \tfrac{1}{2} \sqrt{ \EE \|z_{m-1} - z_{m-2} \|^2 } + \tfrac{\sqrt{s} }{K_1 \rho} \sqrt{ \EE \Upsilon_{m-1} } + \tfrac{2 K_1 \Delta_{m,i+1}}{Z}.
    \end{equation*}
    Applying the same argument using inequality \eqref{eq:K} instead of \eqref{eq:Z}, we obtain
    \begin{equation*}
    \begin{aligned}
        & \tfrac{3}{2} \sum\nolimits_{k=m}^i \sqrt{ \EE \|z_{k+1} - z_k\|^2 } \\
        &\le \tfrac{1}{2} \sqrt{ \EE \|z_m - z_{m-1} \|^2 } + \tfrac{1}{2} \sqrt{ \EE \|z_{m-1} - z_{m-2} \|^2 } + \tfrac{\sqrt{s} }{K_1 \rho} \sqrt{ \EE \Upsilon_{m-1} } + \tfrac{2 K_1 \Delta_{m,i+1}}{K_2}.
    \end{aligned}
    \end{equation*}
    Adding these inequalities together, we have
    \begin{equation*}
    \begin{aligned}
        \tfrac{3}{2} \big( \sum\nolimits_{k=m}^i \sqrt{ \EE \|z_{k+1} - z_k\|^2} + \sqrt{\EE\|z_k - z_{k-1}\|^2} \big) &\le \tfrac{1}{2} \sqrt{\EE \|z_m - z_{m-1} \|^2} + \sqrt{ \EE \|z_{m-1} - z_{m-2} \|^2 } \\
        &\qquad + \tfrac{2 \sqrt{s} }{K_1 \rho} \sqrt{ \EE \Upsilon_{m-1} } + \tfrac{2 K_1 (K_2+ Z) \Delta_{m,i+1} }{K_2 Z}.
    \end{aligned}
    \end{equation*}
    For easier analysis, we use a slightly looser inequality:
    \begin{equation}
    \begin{aligned}
    \label{eq:final}
        &\sum\nolimits_{k=m}^i \sqrt{ \EE \|z_{k+1} - z_k\|^2} + \sqrt{\EE\|z_k - z_{k-1}\|^2} \\ &\le \sqrt{ \EE \|z_m - z_{m-1} \|^2 } + \sqrt{ \EE \|z_{m-1} - z_{m-2} \|^2 } + \tfrac{2 \sqrt{s} }{K_1 \rho} \sqrt{ \EE \Upsilon_{m-1} } + \tfrac{2 K_1 (K_2+ Z) \Delta_{m,i+1} }{K_2 Z}.
    \end{aligned}
    \end{equation}
    Applying Jensen's inequality to the terms on the left gives
    \begin{equation*}
    \begin{aligned}
        & \sum\nolimits_{k=m}^i \EE \|z_{k+1} - z_k\| + \EE \|z_k - z_{k-1}\| \\
        &\le \sqrt{ \EE \|z_m - z_{m-1} \|^2 } + \sqrt{ \EE \|z_{m-1} - z_{m-2} \|^2 } + \tfrac{2 \sqrt{s} }{K_1 \rho} \sqrt{ \EE \Upsilon_{m-1} } + \tfrac{2 K_1 (K_2 + Z) \Delta_{m,i+1} }{K_2 Z},
    \end{aligned}
    \end{equation*}
    and letting $i \to \infty$ proves the assertion.

    An immediate consequence of Claim 1 is that the sequence $\EE \|z_{k+1} - z_k\|$ is Cauchy, so the sequence $\{z_k\}_{k=0}^\infty$ converges in expectation to a critical point. This is because, for any $p, q \in \mathbb{N}$ with $p \ge q$, $\EE \|z_p - z_q\| = \EE \|\msum_{k=q}^{p-1} z_{k+1} - z_k\| \le \msum_{k=q}^{p-1} \EE \| z_{k+1} - z_k\|$, and the finite length property implies this final sum converges to zero. This proves Claim 2.
\end{proof}

Finally, we prove convergence rates for SPRING depending on the KL exponent of the objective function, demonstrating that the full convergence theory of PALM extends to SPRING.

\begin{theorem}[Convergence Rates]
    Suppose $\Phi$ is a semi-algebraic function with KL exponent $\theta \in [0,1)$. Let $\{z_k\}_{k=0}^\infty$ be a bounded sequence of iterates of SPRING using a variance-reduced gradient estimator and step-sizes satisfying the hypotheses of Lemma \ref{lem:4case}. The following convergence rates hold:
    \begin{enumerate}
        \item If $\theta = 0$, then there exists an $m \in \mathbb{N}$ such that $\EE \Phi(z_k) = \EE \Phi(z^\star)$ for all $k \ge m$.
        \item If $\theta \in (0,1/2]$, then there exists $d_1 > 0$ and $\tau \in [1 - \rho, 1)$ such that $\EE \|z_k - z^\star\| \le d_1 \tau^k$.
        \item If $\theta \in (1/2,1)$, then there exists a constant $d_2 > 0$ such that $\EE \|z_k - z^\star\| \le d_2 k^{-\frac{1-\theta}{2 \theta - 1}}$.
    \end{enumerate}
\end{theorem}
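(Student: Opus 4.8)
The plan is to follow the classical Attouch--Bolte--Svaiter scheme for KL rates, using Lemma \ref{lem:finitelength} as the engine and reducing each case to the decay of the ``tail length'' $\ell_m \defeq \sum_{j \ge m} \sqrt{\EE \|z_{j+1} - z_j\|^2}$, which is finite by Lemma \ref{lem:finitelength}. Since the iterates converge (a.s.\ and in expectation) to a critical point $z^\star$ with $z^\star - z_m = \sum_{j \ge m}(z_{j+1}-z_j)$ a.s., Jensen's inequality gives $\EE \|z_m - z^\star\| \le \sum_{j\ge m} \EE\|z_{j+1}-z_j\| \le \ell_m$, so it suffices to bound $\ell_m$. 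Throughout I will carry the companion sequence $\sqrt{\EE \Upsilon_m}$, whose geometric control is \eqref{eq:sqrtgeo}, and repeatedly use $\sqrt{\EE\|z_j - z_{j-1}\|^2} = \ell_{j-1} - \ell_j$ together with $C_k \le K_1\sqrt{\EE\|z_k - z_{k-1}\|^2} + K_1\sqrt{\EE\|z_{k-1}-z_{k-2}\|^2} + \tfrac{2\sqrt s}{\rho}\sqrt{\EE\Upsilon_{k-1}}$, obtained from the definition of $C_k$ by dropping $-\tfrac{2\sqrt s}{\rho}\sqrt{\EE\Upsilon_k}$.

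\emph{Case $\theta = 0$.} If $z_k$ were never critical in finitely many iterations, Lemma \ref{lem:exkl} would apply with $\phi(r) = ar$, i.e.\ $\phi' \equiv a$; combined with the subgradient estimate \eqref{eq:distbound} this forces $a\,C_k \ge 1$ for all large $k$, contradicting $C_k \to 0$ (which follows from $\EE\|z_k - z_{k-1}\|^2, \EE\Upsilon_k \to 0$ in Lemma \ref{lem:4case}). Hence $z_m$ is critical for some finite $m$; since $\EE\Psi_k$ is non-increasing by Lemma \ref{lem:seqdec} and its excess terms $\tfrac{1}{2\rho\sqrt{\cdots}}\EE\Upsilon_k + \tfrac{\sqrt{\cdots}}{\sqrt 2}\EE\|z_k-z_{k-1}\|^2$ vanish, $\EE\Phi(z_k)$ must settle at $\EE\Phi(z^\star)$ for all $k \ge m$.

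\emph{The two rate cases.} In both, I substitute the KL bound into \eqref{eq:final}. For $\theta \in (0,1/2]$ a function with KL exponent $\theta$ also has exponent $1/2$, so we may take $\theta = 1/2$, $\phi(r) = d r^{1/2}$, and $(1-\theta)/\theta = 1$; from $\phi'(\EE[\Psi_k - \Phi_k^\star])\,C_k \ge 1$ (Lemma \ref{lem:finitelength}) we get $\phi(\EE[\Psi_k - \Phi_k^\star]) \le c\,C_k$ directly. Plugging this bound for $\Delta_{m,i+1}$ into \eqref{eq:final} and using $\sqrt{\EE\|z_j - z_{j-1}\|^2} = \ell_{j-1}-\ell_j$ and \eqref{eq:sqrtgeo} to express $\sqrt{\EE\Upsilon_{m-1}}$ through displacements and an earlier $\Upsilon$, everything collapses to $u_m \le \kappa(u_{m-2} - u_{m+1})$ for the combined tail $u_m \defeq \ell_m + c_0 \sum_{j \ge m}\sqrt{\EE\Upsilon_j}$ with suitable $\kappa \in (0,1)$, $c_0 > 0$; iterating $u_{m+1} \le \tfrac{\kappa}{1+\kappa}u_{m-2}$ yields $u_m \le d_1 \tau^m$, hence $\EE\|z_m - z^\star\| \le \ell_m \le d_1 \tau^m$, and since the tail of $\sqrt{\EE\Upsilon}$ cannot contract faster than the rate dictated by \eqref{eq:geo} one gets $\tau \ge 1-\rho$. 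For $\theta \in (1/2,1)$, with $p \defeq (1-\theta)/\theta \in (0,1)$ the KL inequality only gives $\phi(\EE[\Psi_k - \Phi_k^\star]) \le c\,C_k^{p}$; the same substitution into \eqref{eq:final} produces $u_m \le C(u_{m-2} - u_{m+1})^{p}$ for the same $u_m$, and the conclusion follows from the classical lemma on non-negative sequences with $s_m \le C(s_{m-1} - s_m)^{p}$ (used for PALM in \cite{bolte2014proximal}), which gives $s_m = \mathcal{O}(m^{-p/(1-p)})$; since $p/(1-p) = (1-\theta)/(2\theta - 1)$, we obtain $\EE\|z_k - z^\star\| \le \ell_k \le u_k \le d_2\, k^{-(1-\theta)/(2\theta-1)}$.

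The main obstacle is the bookkeeping forced by the stochastic-error sequence $\{\Upsilon_k\}$: unlike deterministic PALM, the proof must propagate $\sqrt{\EE\Upsilon_m}$ alongside the displacement tails and show, via \eqref{eq:geo}--\eqref{eq:sqrtgeo}, that the combined quantity $u_m$ is genuinely non-increasing and satisfies the same one-parameter recursion in every case, so that all three regimes reduce to the scalar recursions above. Pinning down the constants --- in particular verifying $\tau \ge 1-\rho$ and checking that the reduction to $\theta = 1/2$ in the first rate case is compatible with the $\Upsilon_k$ terms --- is where most of the care lies; the KL/desingularization steps themselves are routine once Lemma \ref{lem:finitelength} is available.
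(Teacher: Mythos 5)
Your proposal follows the same Attouch--Bolte--Svaiter scheme that the paper uses: feed the finite-length inequality of Lemma~\ref{lem:finitelength} into a scalar recursion, carry a companion $\sqrt{\EE\Upsilon_m}$ alongside the displacement tails via \eqref{eq:sqrtgeo}, and in the regime $\theta\in(1/2,1)$ close the argument with the Attouch--Bolte two-case analysis. The differences are bookkeeping: you use the Lyapunov $u_m = \ell_m + c_0\sum_{j\ge m}\sqrt{\EE\Upsilon_j}$ (a summed Upsilon tail) together with a lag-two recursion $u_m \le \kappa(u_{m-2}-u_{m+1})$, whereas the paper's $T_m$, $S_m$ carry an \emph{unsummed} $\sqrt{\EE\Upsilon_m}$ and give lag-one geometric recursions; both constructions reach the same conclusion. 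One point to tighten: your $\theta=0$ argument passes quickly from ``$z_m$ is critical after finitely many steps'' to ``$\EE\Phi(z_k)$ settles at $\EE\Phi(z^\star)$''; the paper instead shows that while $\EE\Phi(z_k)\neq\Phi_k^\star$ the quantity $\EE\Psi_k$ would decrease by a fixed positive amount each step (via the subgradient bound plugged into \eqref{eq:psidec}), contradicting boundedness below, and then uses $\Phi_k^\star\nearrow\EE\Phi(z^\star)$ to close --- this extra step is needed to get equality of expectations, not just criticality. Your justification $\tau\ge 1-\rho$ is heuristic; the paper instead chooses its free constant large so the $\Upsilon$-coefficient ratio approaches $1-\rho/2$, which then certifies $\tau\in[1-\rho,1)$.
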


\begin{proof}
    As in the proof of the previous lemma, if $\theta \in (0,1/2)$, then $\Phi$ satisfies the KL property with exponent $1/2$, so we consider only the case $\theta \in [1/2,1)$.

    Substituting the desingularizing function $\phi(r) = a r^{1 - \theta}$ into \eqref{eq:final},
    \begin{equation*}
    \begin{aligned}
         &\sum\nolimits_{k=m}^\infty \sqrt{ \EE \|z_{k+1} - z_k\|^2} + \sqrt{\EE\|z_k - z_{k-1}\|^2} \\
         &\le \sqrt{ \EE \|z_m - z_{m-1} \|^2 } + \sqrt{ \EE \|z_{m-1} - z_{m-2} \|^2}  + \tfrac{2 \sqrt{s}}{K_1 \rho} \sqrt{ \EE \Upsilon_{m-1} } + a K_3 ( \EE [\Psi_m - \Psi^\star])^{1-\theta}.
     \end{aligned}
    \end{equation*}
    Because $\Psi_m = \Phi(z_m) + \mathcal{O}(\|z_m - z_{m-1}\|^2 + \Upsilon_m)$, we can rewrite the final term as $\Phi(z_m) - \Phi_m^\star$.
    \begin{equation*}
    \begin{aligned}
        (\EE [\Psi_m - \Phi^\star_m])^{1-\theta}
        &=  (\EE [ \Phi(z_m) - \Phi_m^\star + \tfrac{1}{2 \bar{L} \lambda \rho} \Upsilon_m + \tfrac{V_1 + V_\Upsilon / \rho}{2 \bar{L} \lambda} \|z_m - z_{m-1}\|^2 ])^{1-\theta} \\
        \symnum{1}{\le} & (\EE [\Phi(z_m) - \Phi_m^\star])^{1-\theta} + \big( \tfrac{1}{2 \bar{L} \lambda \rho} \EE \Upsilon_m \big)^{1-\theta} + \big( \tfrac{V_1 + V_\Upsilon / \rho}{2 \bar{L} \lambda} \EE \|z_m - z_{m-1}\|^2 \big)^{1-\theta}.
    \end{aligned}
    \end{equation*}
    Inequality \numcirc{1} is due to the fact that $(a+b)^{1-\theta} \le a^{1-\theta} + b^{1-\theta}$. This yields the inequality
    \begin{equation*}
    \begin{aligned}
         & \sum\nolimits_{k=m}^\infty \sqrt{ \EE \|z_{k+1} - z_k\|^2} + \sqrt{\EE\|z_k - z_{k-1}\|^2} \\
         & \le \sqrt{ \EE \|z_m - z_{m-1} \|^2 } + \sqrt{ \EE \|z_{m-1} - z_{m-2} \|^2} + \tfrac{2 \sqrt{s}}{K_1 \rho} \sqrt{ \EE \Upsilon_{m-1} } + a K_3 ( \EE [\Phi(z_m) - \Phi_m^\star])^{1-\theta} \\
         & \qquad + a K_3 \big( \tfrac{1}{2 \bar{L} \lambda \rho} \EE \Upsilon_m \big)^{1-\theta} + a K_3 \big( \tfrac{V_1 + V_\Upsilon / \rho}{2 \bar{L} \lambda} \EE \|z_m - z_{m-1}\|^2 \big)^{1-\theta}.
     \end{aligned}
    \end{equation*}
    Applying the \KL inequality \eqref{eq:kl},
    \begin{equation}
    \label{eq:main}
        a K_3 ( \EE \left[\Phi(z_m) - \Phi_m^\star \right] )^{1-\theta} \le a K_3 ( \EE \|\zeta_m\| )^{\frac{1-\theta}{\theta}},
    \end{equation}
    where $\zeta_m \in \partial \Phi(z_m)$ and we have absorbed the constant $C$ into $a$. Equation \eqref{eq:subnormbound} provides a bound on the norm of the subgradient:
    \begin{equation*}
    \begin{aligned}
        (\EE \|\zeta_m\|)^{\frac{1-\theta}{\theta}} & \le \big( p ( \sqrt{\EE \| z_m - z_{m-1} \|^2} + \sqrt{\EE \|z_{m-1} - z_{m-2}\|^2} ) + \sqrt{s \EE \Upsilon_{m-1}} \big)^{\frac{1 - \theta}{\theta}}.
        \end{aligned}
    \end{equation*}
    Denote the right side of this inequality $\Theta_m^{\frac{1-\theta}{\theta}}$. Therefore,
    \begin{equation}
    \label{eq:dom}
    \begin{aligned}
        & \sum\nolimits_{k=m}^\infty \sqrt{ \EE \|z_{k+1} - z_k\|^2} + \sqrt{\EE\|z_k - z_{k-1}\|^2} \\
        &\le \sqrt{ \EE \|z_m - z_{m-1} \|^2 } + \sqrt{ \EE \|z_{m-1} - z_{m-2} \|^2} + \tfrac{2 \sqrt{s}}{K_1 \rho} \sqrt{ \EE \Upsilon_{m-1} } + a K_3 \Theta_m^{\frac{1-\theta}{\theta}} \\
        & + a K_3 \big( \tfrac{1}{2 \bar{L} \lambda \rho} \EE \Upsilon_m \big)^{1-\theta} + a K_3 \big( \tfrac{V_1 + V_\Upsilon / \rho}{2 \bar{L} \lambda} \EE \|z_m - z_{m-1}\|^2 \big)^{1-\theta}.
    \end{aligned}
    \end{equation}
    Suppose $\theta \in (1/2, 1)$. Each of the terms on the right side of this inequality are converging to zero, but at different rates. Because $\Theta_m = \mathcal{O}(\sqrt{ \EE \|z_m - z_{m-1} \|^2 } + \sqrt{\EE \|z_{m-1} - z_{m-2} \|^2} + \sqrt{\EE \Upsilon_{m-1}})$, and $\theta$ satisfies $\frac{1-\theta}{\theta} < 1$, the term $\Theta_m^{\frac{1-\theta}{\theta}}$ dominates the first three terms on the right side of this inequality for large $m$. Also, because $\frac{1-\theta}{2 \theta} \le 1 - \theta$, $\Theta_m^{\frac{1-\theta}{\theta}}$ dominates the final two terms as well. Combining these facts, there exists a natural number $M_1$ such that for all $m \ge M_1$,
    \begin{equation}
    \label{eq:simple}
        \big( \sum\nolimits_{k=m}^\infty \sqrt{ \EE \|z_{k+1} - z_k\|^2} + \sqrt{\EE\|z_k - z_{k-1}\|^2} \big)^{\frac{\theta}{1 - \theta}} \le P \Theta_m,
    \end{equation}
    for some constant $P > (a K_3)^{\frac{\theta}{1 - \theta}}$. The bound of \eqref{eq:sqrtgeo} implies
    \begin{equation*}
    \begin{aligned}
        2 \sqrt{s \EE \Upsilon_{m-1}} & \le \tfrac{4 \sqrt{s}}{\rho} \big( \sqrt{\EE \Upsilon_{m-1}} - \sqrt{\EE \Upsilon_m} + \sqrt{V_\Upsilon} (\sqrt{\EE \|z_m - z_{m-1}\|^2} + \sqrt{\EE \|z_{m-1} - z_{m-2}\|^2}) \big).
    \end{aligned}
    \end{equation*}
    Therefore,
    \begin{equation}
    \begin{aligned}
    \label{eq:new}
        \Theta_m & = p ( \sqrt{ \EE \|z_m - z_{m-1} \|^2 } + \sqrt{\EE \|z_{m-1} - z_{m-2} \|^2} ) + (2 \sqrt{ s \EE \Upsilon_{m-1} } - \sqrt{ s \EE \Upsilon_{m-1} }) \\
        & \le \big( p + \tfrac{4 \sqrt{s V_\Upsilon} }{\rho} \big) ( \sqrt{ \EE \|z_m - z_{m-1} \|^2 } + \sqrt{\EE \|z_{m-1} - z_{m-2} \|^2} ) \\
        & \qquad + \tfrac{4 \sqrt{s} }{\rho} ( \sqrt{ \EE \Upsilon_{m-1} } - \sqrt{\EE \Upsilon_m} ) - \sqrt{ s \EE \Upsilon_{m-1}}.
    \end{aligned}
    \end{equation}
    Furthermore, because $\frac{\theta}{1-\theta} > 1$ and $\EE \Upsilon_m \to 0$, for large enough $m$, we have $(\sqrt{\EE \Upsilon_m})^{\tfrac{\theta}{1 - \theta}} \ll \sqrt{\EE \Upsilon_m}$. This ensures that there exists a natural number $M_2$ such that for every $m \ge M_2$,
    \begin{equation}
    \label{eq:upbound}
        \big(\tfrac{4 \sqrt{s} (1 - \rho/4)}{\rho (p + 4 \sqrt{s V_\Upsilon} / \rho ) } \sqrt{\EE \Upsilon_m} \big)^{\frac{\theta}{1 - \theta}} \le P \sqrt{ s \EE \Upsilon_m }.
    \end{equation}
    (The constant appearing on the left was chosen to simplify later arguments.) Therefore, \eqref{eq:simple} implies
    \begin{equation*}
    \begin{aligned}
        & \big( \sum\nolimits_{k=m}^\infty \sqrt{ \EE \|z_{k+1} - z_k\|^2} + \sqrt{\EE\|z_k - z_{k-1}\|^2} + \tfrac{4 \sqrt{s} (1 - \rho/4)}{\rho (p + 4 \sqrt{s V_\Upsilon} / \rho ) } \sqrt{ \EE \Upsilon_m } \big)^{\frac{\theta}{1 - \theta}} \\
        & \symnum{1}{\le} \tfrac{2^{\frac{\theta}{1-\theta}}}{2} \big( \sum\nolimits_{k=m}^\infty \sqrt{ \EE \|z_{k+1} - z_k\|^2} + \sqrt{\EE\|z_k - z_{k-1}\|^2} \big)^{\frac{\theta}{1 - \theta}} \\
        & \qquad + \tfrac{2^{\frac{\theta}{1-\theta}}}{2} \big( \tfrac{4 \sqrt{s} (1 - \rho/4)}{\rho (p + 4 \sqrt{s V_\Upsilon} / \rho ) } \sqrt{ \EE \Upsilon_m } \big)^{\frac{\theta}{1 - \theta}} \\
        & \symnum{2}{\le} \tfrac{2^{\frac{\theta}{1-\theta}}}{2} \big( \sum\nolimits_{k=m}^\infty \sqrt{ \EE \|z_{k+1} - z_k\|^2} + \sqrt{\EE\|z_k - z_{k-1}\|^2} \big)^{\frac{\theta}{1 - \theta}} + \tfrac{2^{\frac{\theta}{1-\theta}}}{2} \big(P \sqrt{ s \EE \Upsilon_m } \big) \\
        & \symnum{3}{\le} \tfrac{2^{\frac{\theta}{1-\theta}}}{2} \Big(P (p + 4 \sqrt{s V_\Upsilon} / \rho) \big( \sqrt{\EE \left\| z_m - z_{m-1} \right\|^2} +
        \sqrt{ \left\| z_{m-1} - z_{m-2} \right\|^2} \big) \\
        & \qquad + \tfrac{4 \sqrt{s} P (1 - \rho / 4)}{\rho} \big( \sqrt{ \EE \Upsilon_{m-1} } - \sqrt{ \EE \Upsilon_m} \big) \Big).
    \end{aligned}
    \end{equation*}
    Here, \numcirc{1} follows by convexity of the function $x^{\frac{\theta}{1-\theta}}$ for $\theta \in [1/2,1)$ and $x \ge 0$, \numcirc{2} is \eqref{eq:upbound}, and \numcirc{3} is \eqref{eq:simple} combined with \eqref{eq:new}. We absorb the constant $\frac{2^{\frac{\theta}{1-\theta}}}{2}$ into $P$. With
    \begin{equation*}
        S_m \defeq \sum\nolimits_{k=m}^\infty \sqrt{ \EE \|z_{k+1} - z_k\|^2 } + \sqrt{ \EE \|z_k - z_{k-1}\|^2} + \tfrac{4 \sqrt{s} P (1 - \rho / 4)}{\rho (p + 4 \sqrt{s V_\Upsilon} / \rho)} \sqrt{\EE \Upsilon_m},
    \end{equation*}
    we have shown
    \begin{equation}
    \label{eq:simplere}
        S_m^{\frac{\theta}{1 - \theta}} \le P (p + 4 \sqrt{s V_\Upsilon} / \rho) (S_{m-1} - S_m),
    \end{equation}
    The rest of the proof follows the proof of \cite[Theorem 5]{attouchbolte}. Let $h(r) \defeq r^{-\frac{\theta}{1 - \theta}}$. First, suppose that $h(S_m) \le R h(S_{m-1})$ for some $R \in (1, \infty)$. Then \eqref{eq:simplere} ensures that
    \begin{equation*}
        \begin{aligned}
        1  \le P (p + 4 \sqrt{s V_\Upsilon} / \rho)(S_{m-1} - S_m) h(S_m)
        & \le R P (p + 4 \sqrt{s V_\Upsilon} / \rho)(S_{m-1} - S_m) h(S_{m-1}) \\
        & \le R P (p + 4 \sqrt{s V_\Upsilon} / \rho)\int_{S_m}^{S_{m-1}} h(r) dr \\
        & = \tfrac{R P (p + 4 \sqrt{s V_\Upsilon} / \rho)(1 - \theta)}{1 - 2 \theta} \Big[ S_{m-1}^{\frac{1 - 2 \theta}{1 - \theta}} - S_m^{\frac{1 - 2 \theta}{1 - \theta}} \Big].
        \end{aligned}
    \end{equation*}
    Hence,
    \begin{equation*}
        0 < - \tfrac{1 - 2 \theta}{R P (p + 4 \sqrt{s V_\Upsilon} / \rho)(1 - \theta)} \le S_{m}^{\frac{1 - 2 \theta}{1 - \theta}} - S_{m-1}^{\frac{1 - 2 \theta}{1 - \theta}}.
    \end{equation*}
    Now suppose $h(S_m) > R h(S_{m-1})$, so that $S_m < R^{-\frac{1 - \theta}{\theta}} S_{m-1}$ and $S_m^{\frac{1 - 2 \theta}{1 - \theta}} > q^{\frac{1 - 2 \theta}{1 - \theta}} S_{m-1}^{\frac{1 - 2 \theta}{1 - \theta}}$ where $q = R^{-\frac{1 - \theta}{\theta}}$. This implies that
    \begin{equation*}
        \big( q^{\frac{1 - 2 \theta}{1 - \theta}} - 1 \big) S_{m-1}^{\frac{1 - 2 \theta}{1 - \theta}} \le S_m^{\frac{1 - 2 \theta}{1 - \theta}} - S_{m-1}^{\frac{1 - 2 \theta}{1 - \theta}},
    \end{equation*}
    and the quantity on the left is clearly bounded away from zero because $q < 1$, $\frac{1 - 2 \theta}{1 - \theta} < 0$, and $S_{m-1} \to 0$. This shows that in either case, there exists a $\mu > 0$ such that
    \begin{equation*}
        \mu \le S_m^{\frac{1 - 2 \theta}{1 - \theta}} - S_{m-1}^{\frac{1 - 2 \theta}{1 - \theta}}.
    \end{equation*}
    Summing this inequality from $m = M_2$ to $m = M$, we obtain $(M - M_2) \mu \le S_M^{\frac{1 - 2 \theta}{1 - \theta}} - S_{M_2-1}^{\frac{1 - 2 \theta}{1 - \theta}}$, and because the function $x \mapsto x^{\frac{1 - \theta}{1 - 2 \theta}}$ is decreasing, this implies
    \begin{equation*}
        S_M \le \big( S_{M_2 - 1}^{\frac{1 - 2 \theta}{1 - \theta}} + (M - M_2) \mu \big)^{\frac{1 - \theta}{1 - 2 \theta}} \le d M^{\frac{1 - \theta}{1 - 2 \theta}},
    \end{equation*}
    for some constant $d$. By Jensen's inequality, we can say $\sum_{k = m}^\infty \EE \|z_k - z_{k-1}\| \le S_M \le d M_2^{- \frac{1 - \theta}{2 \theta - 1}}$. Using the fact that $\EE \|z_k - z^\star\| = \EE \|\sum_{k=m}^\infty z_k - z_{k-1}\| \le \EE \sum_{k=m}^\infty \|z_k - z_{k-1}\|$ proves Claim 1.

    If $\theta = 1/2$, then $\|\zeta_m\|^{\frac{1-\theta}{\theta}} = \|\zeta_m\|$. Equation \eqref{eq:dom} then gives
    \begin{equation}
    \begin{aligned}
    \label{eq:lin}
        & \sum\nolimits_{i=m}^\infty \sqrt{ \EE \|z_{k+1} - z_k\|^2} + \sqrt{\EE\|z_k - z_{k-1}\|^2} \\
        & \le \Big( 1 + a K_3 \big( p + \sqrt{\tfrac{V_1 + V_\Upsilon / \rho}{2 \bar{L} \lambda} } \big) \Big) \big( \sqrt{ \EE \|z_m - z_{m-1} \|^2 } + \sqrt{ \EE \|z_{m-1} - z_{m-2} \|^2} \big) \\
        & \qquad + \left( \tfrac{2 \sqrt{s}}{K_1 \rho} + a K_3 \sqrt{s} \right) \sqrt{ \EE \Upsilon_{m-1} } + a K_3 \sqrt{\tfrac{1}{2 \bar{L} \lambda \rho}} \sqrt{\EE \Upsilon_m},
    \end{aligned}
    \end{equation}
    where we have added the non-negative term $a K_3 \sqrt{\tfrac{V_1 + V_\Upsilon / \rho}{2 \bar{L} \lambda}} \sqrt{\EE \|z_{m-1} - z_{m-2} \|^2}$ to the right to simplify the presentation.
    Using equation \eqref{eq:sqrtgeo}, we have that, for any constant $c > 0$,
    \begin{equation*}
    0 \le - c \sqrt{ \EE \Upsilon_m } + c \big( 1 - \tfrac{\rho}{2} \big) \sqrt{ \EE \Upsilon_{m-1} } + c \sqrt{V_\Upsilon} ( \sqrt{ \EE \|z_m - z_{m-1}\|^2 } + \sqrt{ \EE \|z_{m-1} - z_{m-2}\|^2} ).
    \end{equation*}
    Combining this inequality with \eqref{eq:lin},
    \begin{equation*}
    \begin{aligned}
        & \sum\nolimits_{i=m}^\infty \sqrt{ \EE \|z_{k+1} - z_k\|^2} + \sqrt{\EE\|z_k - z_{k-1}\|^2} \\
        & \le \big( 1 + a K_3 \big( p + \sqrt{\tfrac{V_1 + V_\Upsilon / \rho}{2 \bar{L} \lambda}} \big) + c \sqrt{V_\Upsilon} \big) \big( \sqrt{ \EE \|z_m - z_{m-1} \|^2 } + \sqrt{ \EE \|z_{m-1} - z_{m-2} \|^2} \big) \\
        & \qquad + c \big( 1 - \tfrac{\rho}{2} + \tfrac{2 \sqrt{s}}{c K_1 \rho} + \tfrac{a K_3 \sqrt{s}}{c} \big) \sqrt{ \EE \Upsilon_{m-1} } - c \big(1 - a K_3 c^{-1} \sqrt{\tfrac{1}{2 \bar{L} \lambda \rho}} \big) \sqrt{\EE \Upsilon_m}.
    \end{aligned}
    \end{equation*}
    Defining
    \begin{equation*}
        T_m \defeq \sum\nolimits_{i=m}^\infty \sqrt{ \EE \|z_{i+1} - z_i\|^2 } + \sqrt{ \EE \|z_i - z_{i-1}\|^2 },
    \end{equation*}
    and $P_2 = 1 + a K_3 \big( p + 4 \sqrt{s V_\Upsilon} / \rho + \sqrt{ \frac{V_1 + V_\Upsilon / \rho}{2 \bar{L} \lambda} } \big) + c \sqrt{V_\Upsilon}$, we have shown
    \begin{equation*}
    \begin{aligned}
        & T_m + c \big(1 - a K_3 c^{-1} \sqrt{\tfrac{1}{2 \bar{L} \lambda \rho}} \big) \sqrt{\EE \Upsilon_m} \\
        & \le P_2 ( T_{m-1} - T_m ) + c \big( 1 - \tfrac{\rho}{2} + \tfrac{2 \sqrt{s}}{c K_1 \rho} + \tfrac{a K_3 \sqrt{s}}{c} \big) \sqrt{ \EE \Upsilon_{m-1} }.
    \end{aligned}
    \end{equation*}
    Rearranging,
    \begin{equation*}
        (1 + P_2) T_m + c \big(1 - a K_3 c^{-1} \sqrt{\tfrac{1}{2 \bar{L} \lambda \rho}} \big) \sqrt{\EE \Upsilon_m}
        \le P_2 T_{m-1} + c \big( 1 - \tfrac{\rho}{2} + \tfrac{2 \sqrt{s}}{c K_1 \rho} + \tfrac{a K_3 \sqrt{s}}{c} \big) \sqrt{ \EE \Upsilon_{m-1} }.
    \end{equation*}
    This implies
    \begin{equation*}
    \begin{aligned}
        &T_m + \sqrt{\EE \Upsilon_m} \\
        &\le \max\Big\{ \sfrac{P_2}{1 + P_2}, \big( 1 - \tfrac{\rho}{2} + \tfrac{2 \sqrt{s}}{c K_1 \rho} + \tfrac{a K_3 \sqrt{s}}{c} \big) \big(1 - a K_3 c^{-1} \sqrt{\tfrac{1}{2 \bar{L} \lambda \rho}} \big)^{-1} \Big\} (T_{m - 1} + \sqrt{\EE \Upsilon_{m-1}}).
    \end{aligned}
    \end{equation*}
    For large $c$, the second coefficient in the above expression approaches $1-\rho/2$. This proves the linear rate of Claim 2.

    When $\theta = 0$, the KL property \eqref{eq:kl} implies that exactly one of the following two scenarios holds: either $\EE \Phi(z_k) \not = \Phi_k^\star$ and
    \begin{equation}
    \label{eq:lobound}
        0 < C \le \EE \|\zeta_k\| \qquad \forall \zeta_k \in \EE \partial \Phi(z_k),
    \end{equation}
    or $\Phi(z_k) = \Phi_k^\star$. We show that the above inequality can only hold for a finite number of iterations.

    Using the subgradient bound, the first scenario implies
    \begin{equation*}
    \begin{aligned}
        C^2
        \le (\EE \|\zeta_k\|)^2
        & \le (p \EE \|z_k - z_{k-1}\| + p \EE \|z_{k-1} - z_{k-2}\| + \EE \Gamma_{k-1})^2, \\
        & \le 3 p^2 ( \EE \|z_k - z_{k-1}\| )^2 + 3 p^2 (\EE \|z_{k-1} - z_{k-2}\|)^2 + 3 (\EE \Gamma_{k-1})^2, \\
        & \le 3 p^2 \EE \|z_k - z_{k-1}\|^2 + 3 p^2 \EE \|z_{k-1} - z_{k-2}\|^2 + 3 s \EE \Upsilon_{k-1}.
    \end{aligned}
    \end{equation*}
    where we have used the inequality $(a_1+a_2+\cdots+a_s)^2 \le s (a_1^2 + \cdots + a_s^2)$ and Jensen's inequality. Applying this inequality to the decrease of $\Psi_k$ \eqref{eq:psidec}, we obtain
    \begin{equation*}
    \begin{aligned}
        \EE \Psi_k & \le \EE \Psi_{k-1} + \big( \tfrac{\bar{L}(\lambda+1)}{2} + \tfrac{V_1 + V_\Gamma / \rho}{2 \bar{L} \lambda} + Z - \tfrac{1}{2 \eta} \big) \EE \|z_k - z_{k-1}\|^2 - Z \EE \|z_{k-1} - z_{k-2}\|^2 \\
        & \le \EE \Psi_{k-1} - C^2 + \mathcal{O}(\EE \|z_k - z_{k-1}\|^2) + \mathcal{O}(\EE \|z_{k-1} - z_{k-2}\|^2) + \mathcal{O}(\EE \Upsilon_{k-1}),
        \end{aligned}
    \end{equation*}
    for some constant $C^2$.\footnote{We have ignored extraneous constants in the final three terms for clarity.} Because the final three terms go to zero as $k \to \infty$, there exists an index $M_3$ so that the sum of these three terms is bounded above by $C^2 / 2$ for all $k \ge M_3$. Therefore,
    \begin{equation*}
        \EE \Psi_k \le \EE \Psi_{k-1} - \tfrac{C^2}{2}, \qquad \forall k \ge M_3.
    \end{equation*}
    Because $\Psi_k$ is bounded below for all $k$, this inequality can only hold for $N < \infty$ steps. After $N$ steps, it is no longer possible for the bound \eqref{eq:lobound} to hold, so it must be that $\Phi(z_k) = \Phi_k^\star$. Because $\Phi_k^\star < \Phi(z^\star)$, $\Phi_k^\star < \EE \Phi(z_k)$, and both $\EE \Phi(z_k), \Phi_k^\star$ converge to $\EE \Phi(z^\star)$, we must have $\Phi_k^\star = \EE \Phi(z_k) = \EE \Phi(z^\star)$.
\end{proof}

The main difference between these convergence rates and those of PALM occurs when $\theta \in (0,1/2]$. In this case, the linear convergence rate cannot be faster than the geometric decay of the MSE of the gradient estimator, which is of order $(1 - \rho)^k$ after $k$ iterations. Without mini-batching (i.e. $b = 1$), this rate is approximately $(1-1/n)^k$ for SAGA estimator and $(1 - 1/p)^k$ for SARAH estimator.

\section{Numerical Experiments}
\label{sec:ex}

To demonstrate the advantages of SPRING, we compare SPRING using SAGA and SARAH gradient estimators to PALM \cite{bolte2014proximal} and inertial PALM \cite{pock2016inertial}. We also present results for SPRING using the (non-variance-reduced) SGD estimator (a case studied by Xu and Yin \cite{xu2015}). We refer to SPRING using the SGD, SAGA, and SARAH gradient estimators as SPRING-SGD, SPRING-SAGA, and SPRING-SARAH, respectively.
Three applications are considered for comparison: sparse non-negative matrix factorization (Sparse-NMF), sparse principal component analysis (Sparse-PCA), and blind image-deblurring (BID).

{\bf Sparse-NMF:} Given a data-matrix $A$, we seek a factorization $A \approx XY$ where $X\in \mathbb{R}^{n \times r}, Y \in \mathbb{R}^{r \times d}$ are non-negative with $r \le d$ and $X$ sparse. Sparse-NMF has the following formulation:
\begin{equation}\label{S_NMF0}
\min_{X, Y}  \|A - XY\|_F^2, \qquad \mathrm{s.t.}\ \  X, Y \ge 0,\ \  \|X_i\|_0 \leq s,\ i = 1,..., r.
\end{equation}
Here, $X_i$ denotes the $i$'th column of $X$. In dictionary learning and sparse coding, $X$ is called the learned dictionary with coefficients $Y$. In this formulation, the sparsity on $X$ is strictly enforced using the non-convex $\ell_0$ constraint, but one can also use $\ell_1$ regularization to preserve convexity.

{\bf Sparse-PCA:} The problem of Sparse-PCA with $r$ principal components can be written as:
\begin{equation}\label{S_PCA}
	\min_{X, Y}\|A - XY\|_F^2 + \lambda_1 \|X\|_1 + \lambda_2 \|Y\|_1,
\end{equation}
where $ X\in \mathbb{R}^{n \times r}, Y \in \mathbb{R}^{r \times d}$. We use $\ell_1$ regularization on both $X$ and $Y$ to promote sparsity.

{\bf Blind Image-Deblurring:} Let $Z$ be a blurred image. The problem of blind deconvolution reads:
\begin{equation}
 \min_{X, Y} \|Z - X \odot Y\|_F^2 + \lambda \sum\nolimits_{r = 1}^{2d}\Phi([D(X)]_r) \quad \mathrm{s.t.}\quad  0 \le X \le 1,\ 0 \le Y \le 1,\ \|Y\|_1 \leq 1,
\end{equation}
where $\odot$ is the 2D convolution operator, $X$ is the image to recover, and $Y$ is the blur-kernel to estimate. We choose a classic smooth edge-preserving regularizer in the image domain, with $D(\cdot)$ being the 2D differential operator computing the horizontal and vertical gradients for each pixel. For the potential function $\Phi(\cdot)$, we choose $\Phi(v) := \log(1 + \theta v^2)$ as in \cite{pock2016inertial}. This potential function encourages sparsity in the image gradients and hence promotes sharp images. We choose $\theta = 10^3$ in our experiments.

One of the benefits of SPRING and PALM is that the two step-sizes, $\gamma_{X,k}$ and $\gamma_{Y,k}$, depend separately on the Lipschitz constants $\hat{L}_X(Y_k)$ and $\hat{L}_Y(X_k)$. The practical performance of these algorithms depends significantly on the step-size choices. The following section describes how we use adaptive step-sizes in our experiments.

\subsection{Parameter choices and on-the-fly estimation of Lipschitz constants}

The global Lipschitz constants of the partial gradients of $F$ are usually unknown and difficult to estimate. In practice, adaptive step-size choices based on estimating local Lipschitz constants are needed for PALM and inertial PALM \cite{pock2016inertial}. In our experiments, we use the power method to estimate the Lipschitz constants on-the-fly in every iteration of the compared algorithms. For SPRING-SGD, SPRING-SAGA, and SPRING-SARAH, we find that it is sufficient to randomly sub-sample a mini-batch and run 5 iterations of the power method to get an estimate of the Lipschitz constants of the stochastic gradients. For PALM, we run 5 iterations of the power method in each iteration on the full batch to get an estimate of the Lipschitz constants of the full partial gradients.

For example, consider estimating the Lipschitz constants of the gradients corresponding to the objective function of Sparse-NMF (\ref{S_NMF0}). Let $X_k$ and $Y_k$ be the updates of $k$-th iteration, then $L_Y(X_k) = \|X_k\|^2$, which is the largest squared singular value of $X_k$, and can be computed via power iteration:
\begin{equation*}
    v_i = \sfrac{X_k^T(X_kv_{i-1})}{\|X_k^T(X_kv_{i-1})\|_2},
\end{equation*}
with a random initialization $\|v_0\|_2 = 1$. We find that using $5$ iterations is sufficient to provide good estimates, so we approximate $L_Y(X_k)$ by $\| X_k^T(X_k v_5)\|_2$. We use the same strategy for $L_X(Y_k)$.

Denote the estimated Lipschitz constants of the full gradients as $\hat{L}_X(Y_k)$ and $\hat{L}_Y(X_k)$, and denote the estimated Lipschitz constants of the stochastic estimates as $\tilde{L}_X(Y_k)$ and $\tilde{L}_Y(X_k)$. We set the step-sizes of the compared algorithms as follows:

\begin{itemize}
    \item {\bf PALM:} $\gamma_{X,k} = \frac{1}{\hat{L}_X(Y_k)}$ and $\gamma_{Y,k} = \frac{1}{\hat{L}_Y(X_k)}$ (these are the standard step-sizes \cite{bolte2014proximal}).
    \item {\bf Inertial PALM:} $\gamma_{X,k} = \frac{0.9}{\hat{L}_X(Y_k)}$, $\gamma_{Y,k} = \frac{0.9}{\hat{L}_Y(X_k)}$, and we set the momentum parameter to $\frac{k - 1}{k + 2}$, where $k$ denotes the number of iterations. Pock and Sabach \cite{pock2016inertial} assert that this dynamic momentum parameter achieves the best practical performance.\footnote{The dynamic choice of momentum parameter is not theoretically analyzed by Pock and Sabach \cite{pock2016inertial}, but it appears to be superior to the constant inertial parameter choice. Pock and Sabach suggest the aggressive step-sizes $\gamma_{X,k} = \frac{1}{\hat{L}_X(Y_k)}$ and $\gamma_{Y,k} = \frac{1}{\hat{L}_Y(X_k)}$ for the dynamic scheme, but we find these choices sometimes lead to unstable/divergent behavior in the late iterations. Hence, we use the slightly smaller step-sizes $\gamma_{X,k} = \frac{0.9}{\hat{L}_X(Y_k)}$ and $\gamma_{Y,k} = \frac{0.9}{\hat{L}_Y(X_k)}$ instead. These choices ensure the algorithm is stable, and we observe that they do not compromise the convergence rate in practice.}
    \item {\bf SPRING-SGD:} $\gamma_{X,k} = \frac{1}{\sqrt{\lceil k b/n \rceil}\tilde{L}_X(Y_k)}$ and $\gamma_{Y,k} = \frac{1}{\sqrt{\lceil kb/n \rceil}\tilde{L}_Y(X_k)}$. It is well-known in the literature that a shrinking step-size is necessary for SGD to converge to a critical point \cite{bottou2010large,konevcny2015mini,moulines2011non,xu2015}.
    \item {\bf SPRING-SAGA:} $\gamma_{X,k} = \frac{1}{3\tilde{L}_X(Y_k)}$ and $\gamma_{Y,k} = \frac{1}{3\tilde{L}_Y(X_k)}$.
    \item {\bf SPRING-SARAH:} $\gamma_{X,k} = \frac{1}{2\tilde{L}_X(Y_k)}$ and $\gamma_{Y,k} = \frac{1}{2\tilde{L}_y(X_k)}$.
\end{itemize}

\begin{remark}[Practical step-sizes for SPRING-SAGA and SPRING-SARAH]
While the step-sizes suggested in Sections \ref{sec:conv} and \ref{sec:kl} lead to state-of-the-art convergence rates for \eqref{eq:J-F-R}, we observe that those step-size choices are conservative for SPRING-SAGA and SPRING-SARAH in practice. Hence, we adopt the suggested step-size choices in the original works with scale factors $1/3$ for SAGA \cite[Section 2]{SAGA} and $1/2$ for SARAH \cite[Corollary 3]{sarah}. For all tested methods, the step-sizes we use are optimal in practice while ensuring convergence in all experiments with extensive tests.
\end{remark}

The same random initialization is used for all of the compared algorithms in our Sparse-NMF and Sparse-PCA experiments, while for BID we initialize the image estimate with the blurred image and the kernel estimate with all ones. We observe that SPRING with variance-reduced gradients can be sensitive to poor initialization, and this may initially compromise convergence. However, this initialization issue can be effectively resolved if we use plain stochastic gradient without variance-reduction in the first epoch of SPRING-SARAH/SPRING-SAGA as a warm-start, which is suggested in \cite{konevcny2017semi}.

\subsection{Sparse-NMF}

We consider the extended Yale-B dataset and the ORL dataset, which are standard facial recognition benchmarks consisting of human face images.\footnote{Preprocessed versions \cite{cai2007spectral,cai2007learning} can be found in: http://www.cad.zju.edu.cn/home/dengcai/Data/FaceData.html} The ORL datasets contain 400 images of size $64 \times 64$, and the extended Yale-B dataset contains 2414 cropped images of size $32 \times 32$. In this experiment, we extract 49 sparse basis-images for both datasets.
In each iteration of the stochastic algorithms, we randomly sub-sample $2.5\%$ of the full batch as a mini-batch.

\begin{figure}[!ht]
	\centering
	\subfloat[{\tt ORL dataset}]{ \includegraphics[width=0.425\linewidth]{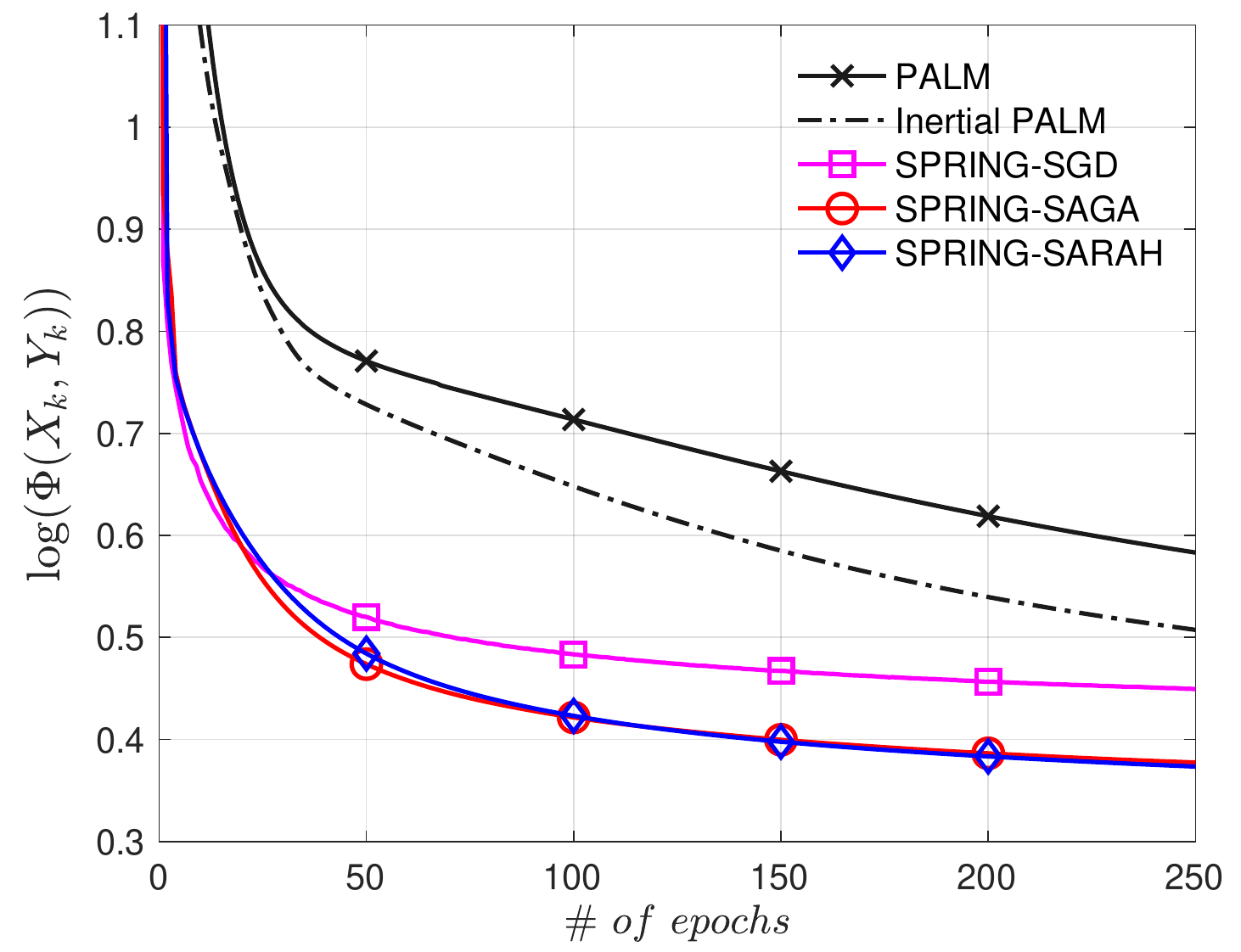} } 	\hspace{3mm}
	\subfloat[{\tt Yale dataset}]{ \includegraphics[width=0.425\linewidth]{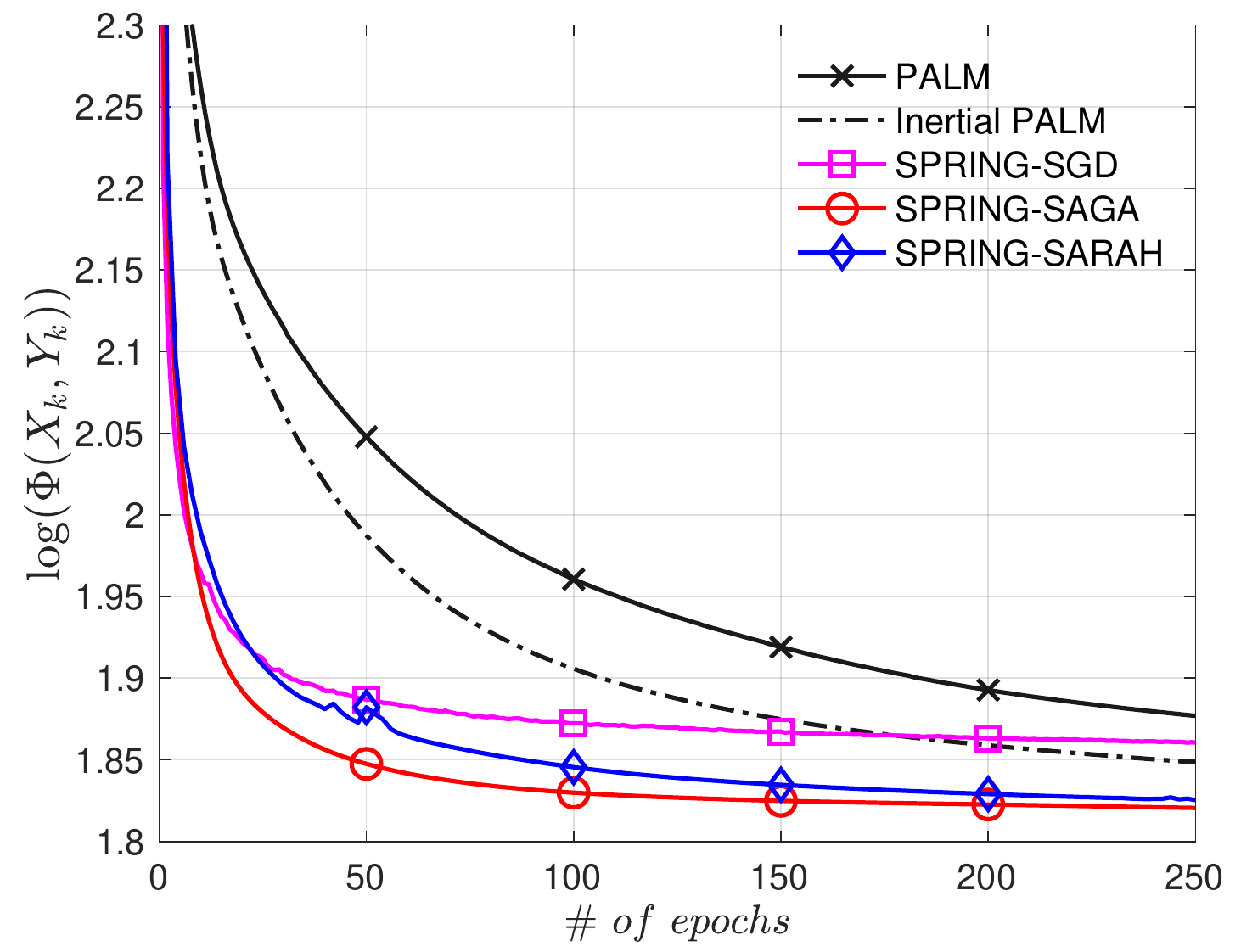} } \\[-2mm]
	\caption{Objective decrease comparison of Sparse-NMF: ORL dataset (left) and Yale dataset (right).}
	\label{snmf_plot}
\end{figure}

The obtained results are shown in Figure \ref{snmf_plot}, from which we observe:

\begin{itemize}
\item Overall, SPRING using SAGA and SARAH estimators achieves superior performance compared to PALM, inertial PALM, and SPRING using the vanilla SGD gradient estimator.
\item PALM has the worst performance in the considered Sparse-NMF tasks, which is not surprising since PALM is the baseline method in this comparison. Incorporating inertia can offer considerable acceleration for PALM.
\item SPRING using the vanilla SGD gradient estimator achieves fast convergence initially, but gradually slows its convergence due to the shrinking step-size that is necessary to combat the non-reducing variance.
However, using variance-reduced gradient estimators SAGA and SARAH, SPRING is able to overcome this issue and achieve the best overall convergence rates.
\end{itemize}

Figure \ref{snmf_orl_ima} presents the basis images generated by SPRING-SAGA and PALM for the ORL dataset (we also present the basis images for the Yale dataset in the supplemental material). It is clear that the basis images generated by SPRING-SAGA appear natural and smooth, while PALM's results at the same epoch appear noisy and distorted.

\begin{figure}[!ht]
	\centering
	\subfloat[{SPRING-SAGA}]{ \includegraphics[width=0.425\linewidth]{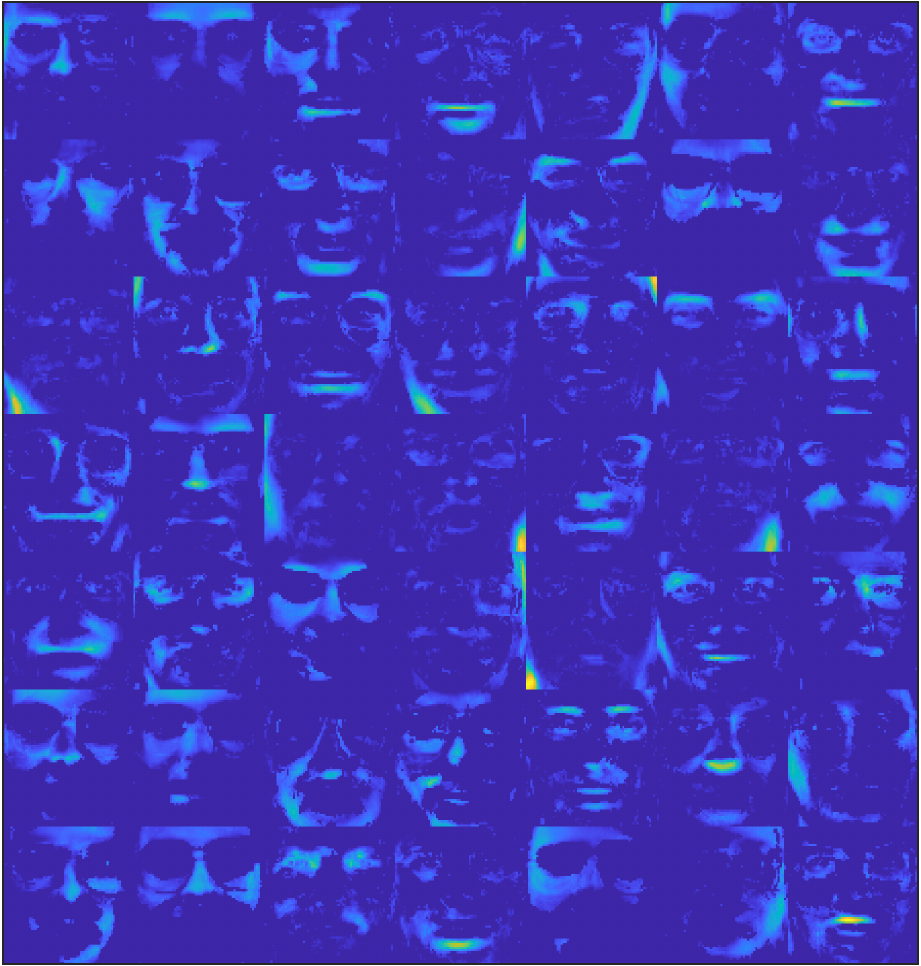} } \hspace{3mm}
	\subfloat[{PALM}]{ \includegraphics[width=0.425\linewidth]{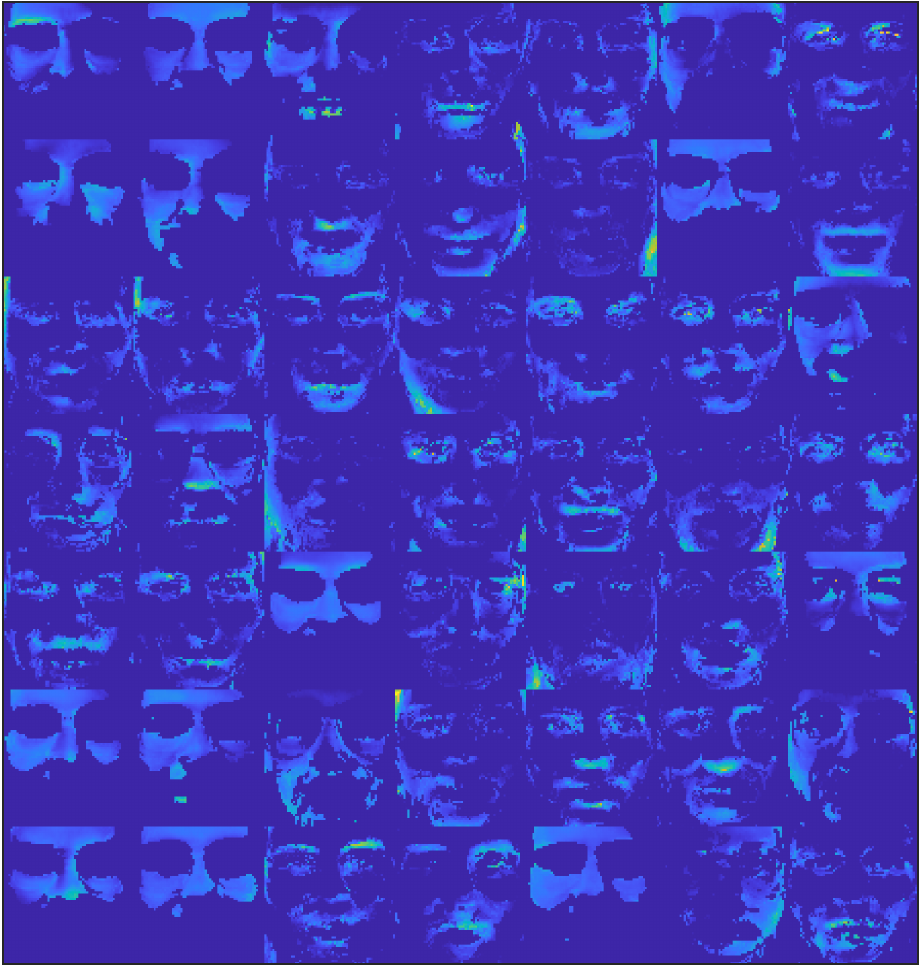} } \\[-2mm]
	\caption{Basis images from the Sparse-NMF experiment generated by SPRING-SAGA and PALM on the $250^{th}$ epoch for the ORL dataset.}
	\label{snmf_orl_ima}
	\vspace{-2ex}
\end{figure}

\subsection{Sparse-PCA}

For our Sparse-PCA experiments, we compare SPRING-SAGA, SPRING-SARAH, SPRING-SGD and PALM. Similar to what we observe in the Sparse-NMF experiments, our results in Figure \ref{spca_plot} show that SPRING with stochastic variance-reduced gradient estimators achieves the fastest convergence. We also observe that inertia provides significant acceleration to PALM in both the Sparse-NMF and Sparse-PCA tasks. We believe that such inertial schemes can also be extended to accelerate SPRING and leave it as an important direction of future research (see \cite{ispring} for some work in this direction).

\begin{figure}[!ht]
	\centering
	\subfloat[{\tt ORL dataset}]{ \includegraphics[width=0.425\linewidth]{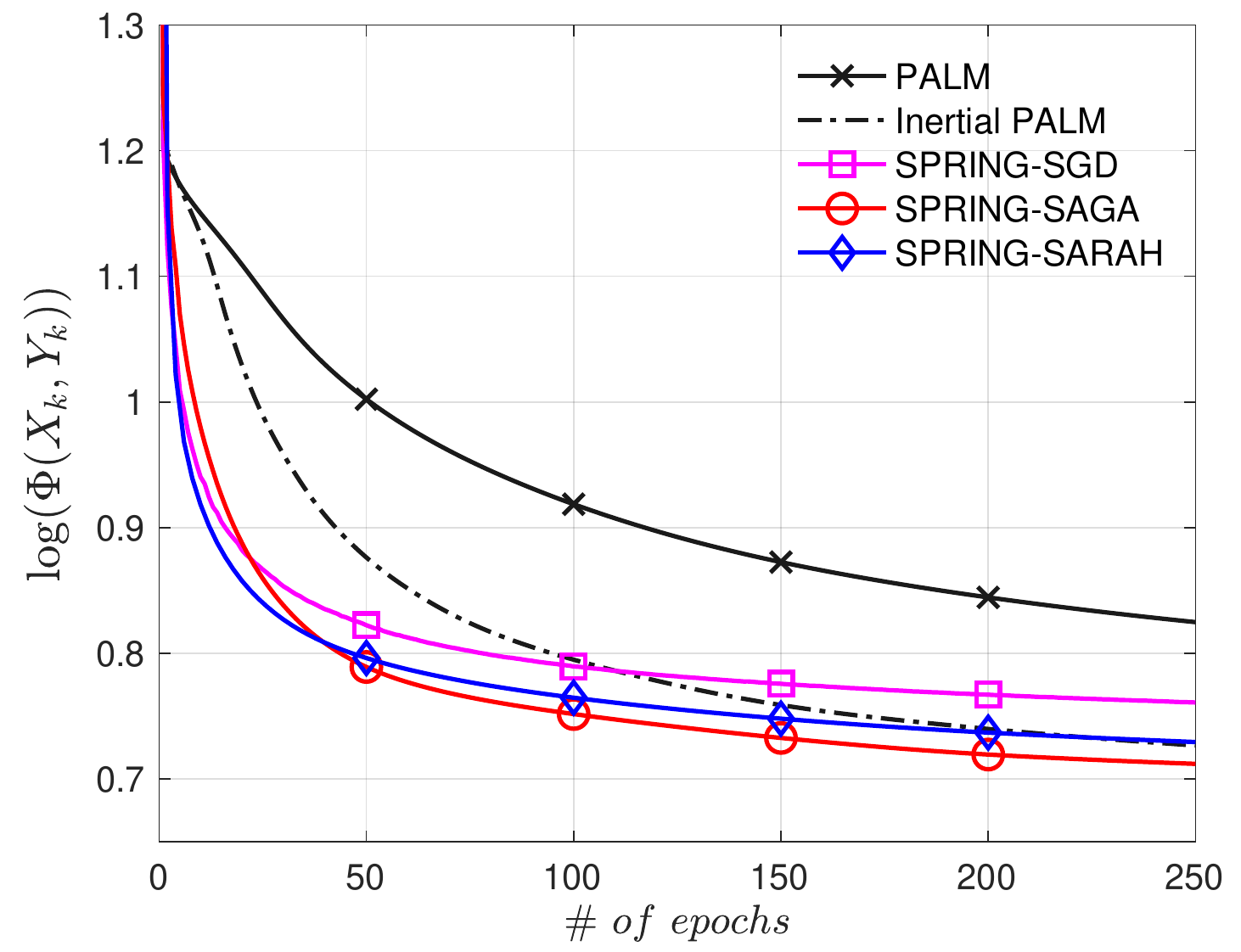} } 	\hspace{3mm}
	\subfloat[{\tt Yale dataset}]{ \includegraphics[width=0.425\linewidth]{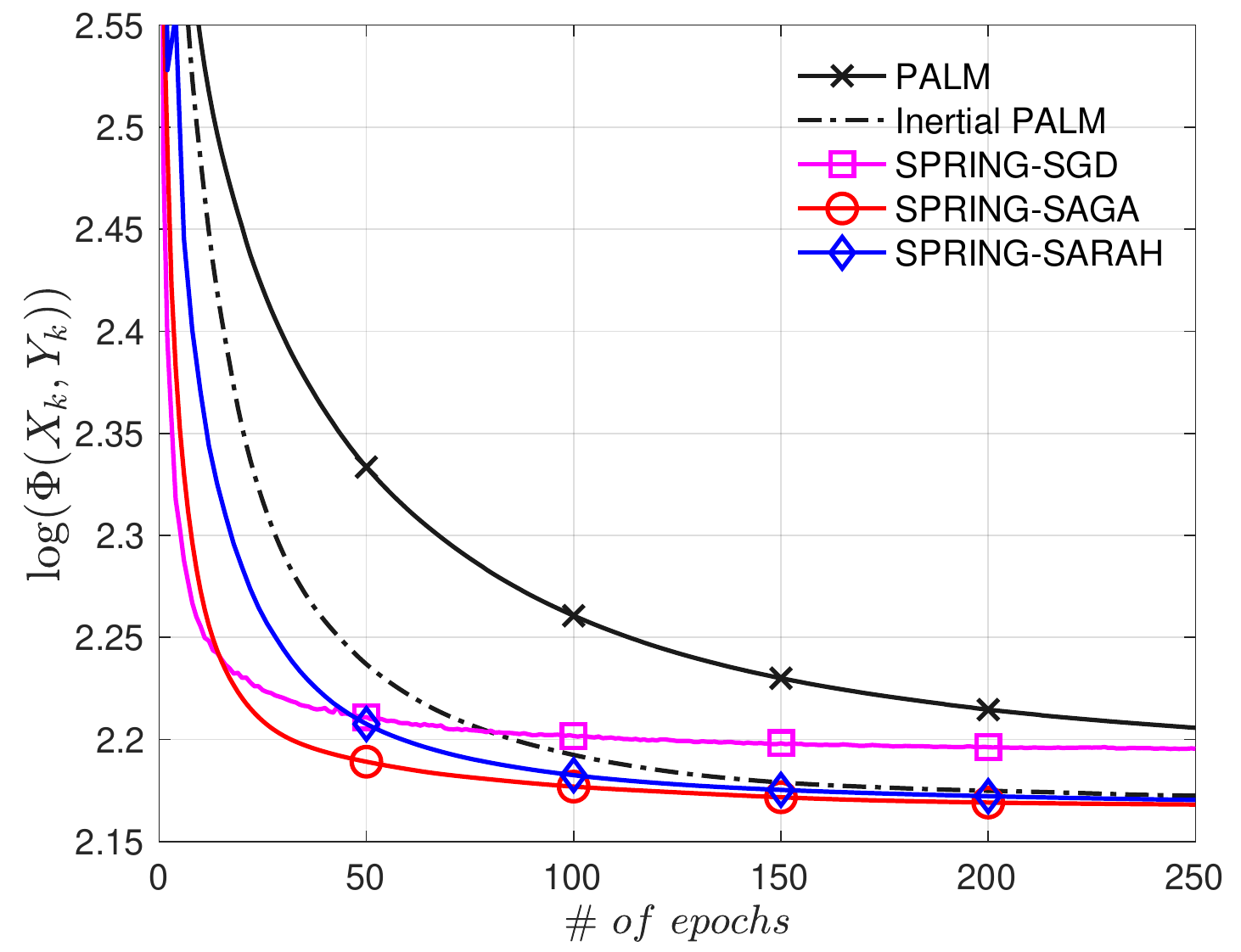} } \\[-2mm]
	\caption{Objective decrease comparison of Sparse-PCA:  ORL dataset (left) and Yale dataset (right).}
	\label{spca_plot}
	\vspace{-3ex}
\end{figure}

\begin{figure}[!ht]
	\centering
	\subfloat[{\tt Kodim08}]{ \includegraphics[width=0.425\linewidth]{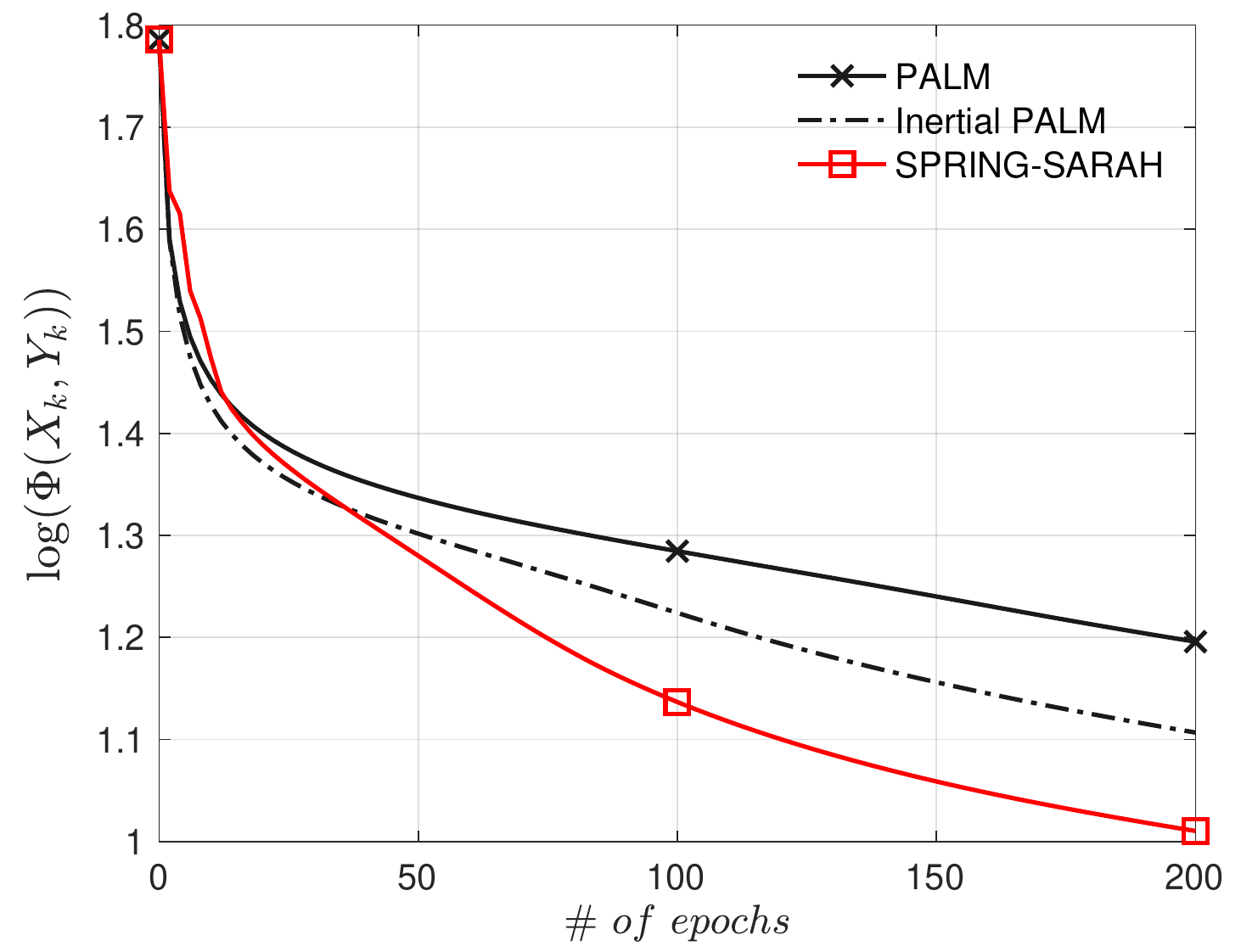} } 	\hspace{3mm}
	\subfloat[{\tt Kodim15}]{ \includegraphics[width=0.425\linewidth]{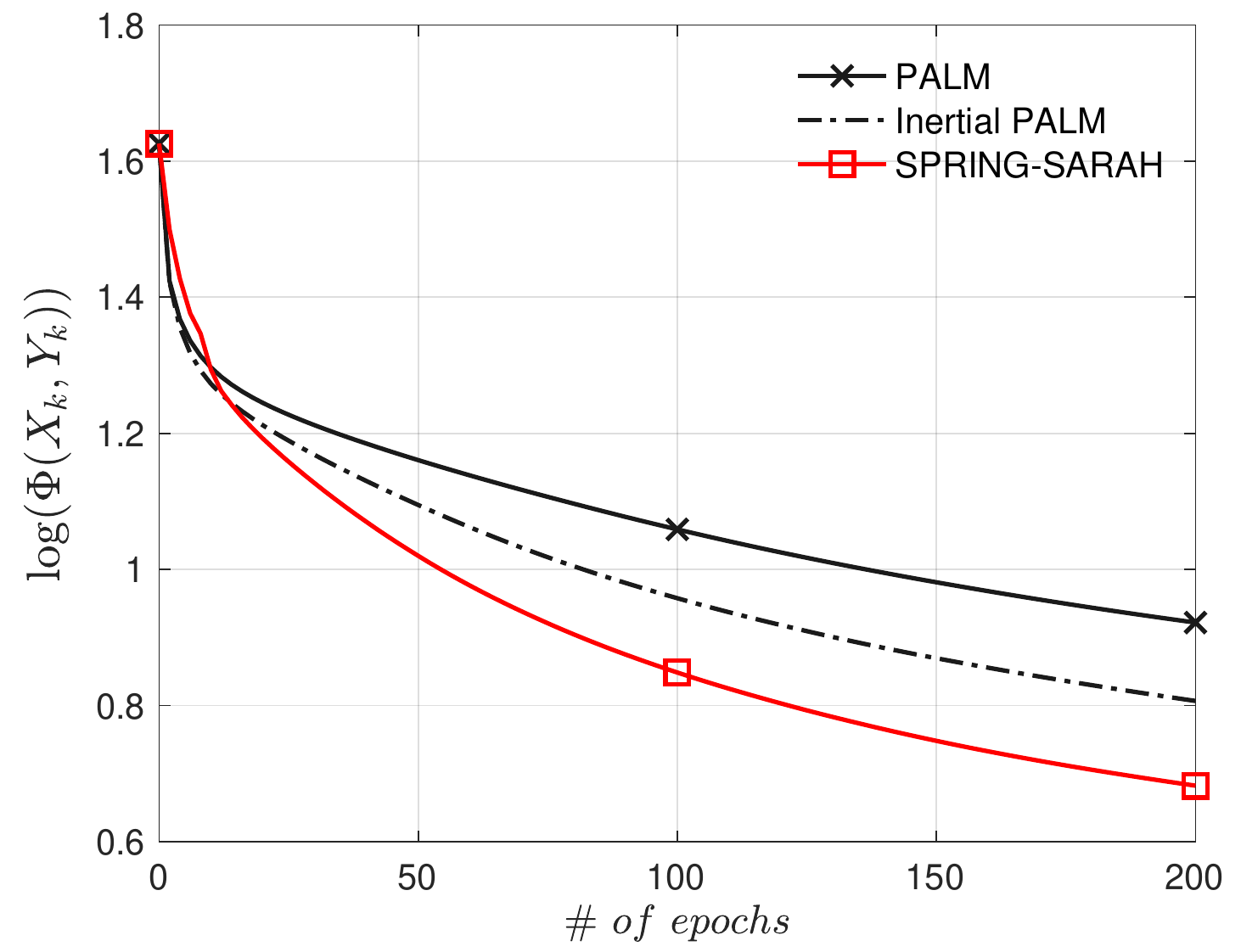} } \\[-2mm]
	\caption{Objective decrease comparison of blind image-deconvolution experiment on Kodim08 (left), and Kodim15 (right) images using an $11 \times 11$ motion-blur kernel.}
	\label{bid_m}
\end{figure}

\subsection{Blind Image-Deblurring}

For blind image-deconvolution, we choose to compare SPRING-SARAH, PALM and inertial PALM. We use two images, {\it Kodim08} and {\it Kodim15}, of size $256 \times 256$ for testing. For each image, two blur kernels---linear motion blur and out-of-focus blur are considered with additional additive Gaussian noise.
For SPRING, the mini-batch size is $1/16$ of the full batch.

For both images with motion blur, the convergence comparisons of the algorithms are provided in Figure \ref{bid_m}, from which we observe SPRING-SARAH is faster than the other two methods in both cases. Figures \ref{kodak08_motion} and \ref{kodak15_motion} provide comparisons of the recovered image and blur kernel. We observe superior performance of SPRING-SARAH over PALM in these figures as well. In particular, we compare the estimated blur kernel of the two algorithms at every $20^{th}$ epoch, and find out that SPRING-SARAH is also faster than PALM.
It is worth noting that, although stochastic gradient methods have been shown to be inherently inefficient for non-blind and non-uniform deblurring task where the blur kernels are known or estimated beforehand \cite{tang2019}, SPRING still offers significant acceleration over PALM in blind-deblurring tasks.
Additional experiments using motion blur kernels are provided in the appendix.

\begin{figure}[!ht]
	\centering
	\subfloat[Original image and kernel]{ \includegraphics[width=0.225\linewidth]{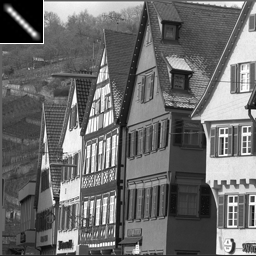} } 	\hspace{1mm}
	\subfloat[Blurred image]{ \includegraphics[width=0.225\linewidth]{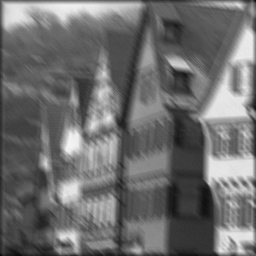} } \hspace{1mm}
	\subfloat[Recovered by PALM]{ \includegraphics[width=0.225\linewidth]{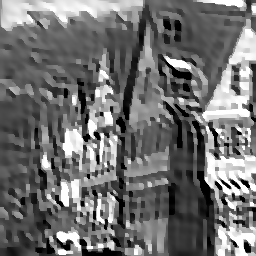} } 	\hspace{1mm}
	\subfloat[Recovered by SPRING]{ \includegraphics[width=0.225\linewidth]{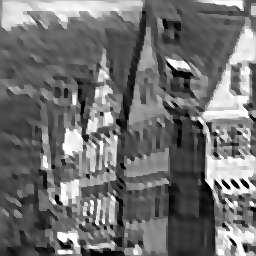} } \\[-2mm]
	\subfloat[Estimated kernel by PALM]{ \includegraphics[width=0.475\linewidth]{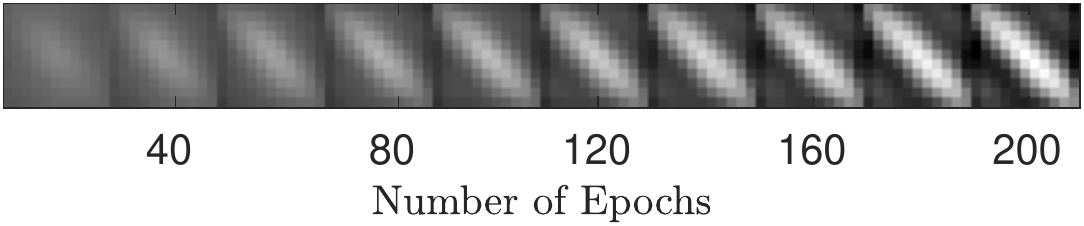} } 	\hspace{1mm}
	\subfloat[Estimated kernel by SPRING]{ \includegraphics[width=0.475\linewidth]{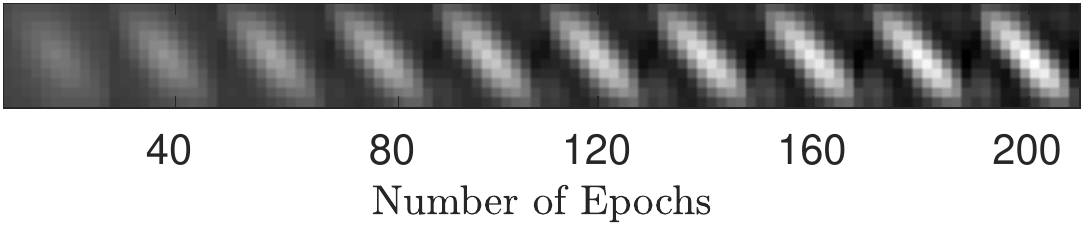} } \\[-2mm]
	\caption{Image and kernel reconstructions from the blind image-deconvolution experiment on the Kodim08 image using an $11 \times 11$ motion blur kernel.}
	\label{kodak08_motion}
	\vspace{-3ex}
\end{figure}

\begin{figure}[!ht]
	\centering
	\subfloat[Original image and kernel]{ \includegraphics[width=0.225\linewidth]{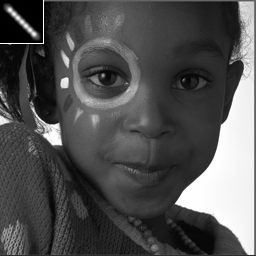} } 	\hspace{1mm}
	\subfloat[Blurred image]{ \includegraphics[width=0.225\linewidth]{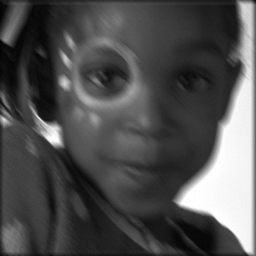} } \hspace{1mm}
	\subfloat[Recovered by PALM]{ \includegraphics[width=0.225\linewidth]{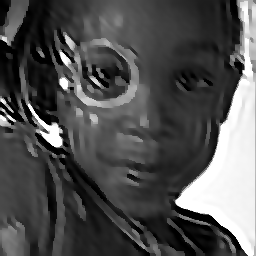} } 	\hspace{1mm}
	\subfloat[Recovered by SPRING]{ \includegraphics[width=0.225\linewidth]{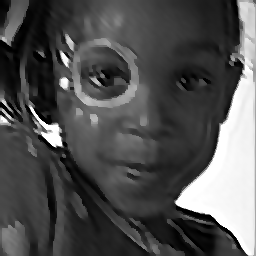} } \\[-2mm]
	\subfloat[Estimated kernel by PALM]{ \includegraphics[width=0.475\linewidth]{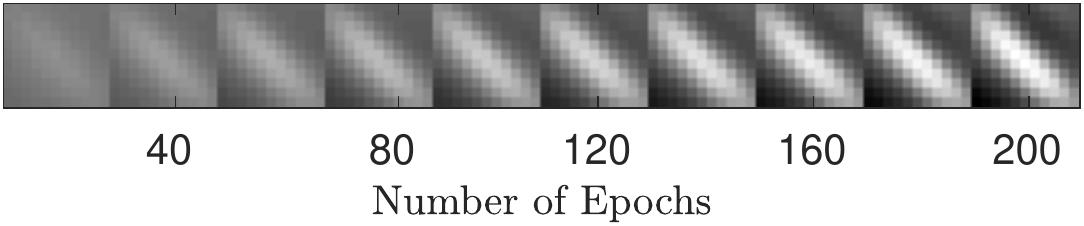} } 	\hspace{1mm}
	\subfloat[Estimated kernel by SPRING]{ \includegraphics[width=0.475\linewidth]{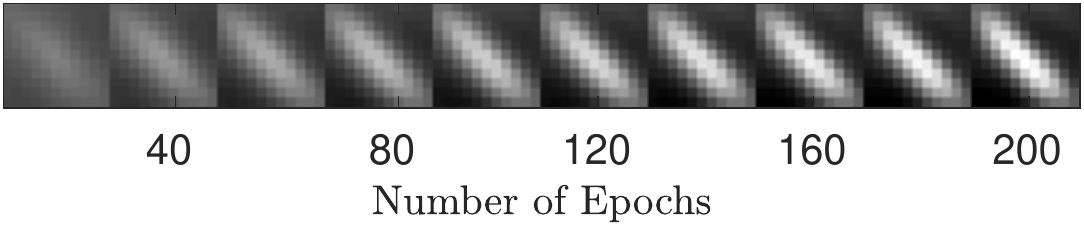} } \\[-2mm]
	\caption{Image and kernel reconstructions from the blind image-deconvolution experiment on the Kodim15 image using an $11 \times 11$ motion blur kernel}.
	\label{kodak15_motion}
	\vspace{-3ex}
\end{figure}

\section{Conclusion}

We propose stochastic extensions of the PALM algorithm of for solving a class of structured non-smooth and non-convex optimization problems. We analyse the convergence properties of our stochastic PALM with two typical variance-reduced stochastic gradient estimators, SAGA and SARAH. For generic optimization problems of the form \eqref{eq:J-F-R}, we show that SPRING-SAGA (with $b \le \mathcal{O}(n^{2/3})$) and SPRING-SARAH return an $\epsilon$-approximate critical point in expectation in no more than $O(\frac{n^2 L}{b^3 \epsilon^2})$ and $O(\frac{\sqrt{n} L}{\epsilon^2})$ SFO calls, respectively, showing that SPRING-SARAH achieves the complexity lower bound for stochastic non-convex optimization.
For objectives satisfying an error bound, we further demonstrate that {our methods converge linearly to the global optimum}. Because of the generality of our results, they contain almost all existing results for stochastic non-convex optimization as special cases, and they improve on them in many settings.

\section*{Acknowledgements}
JT and MD acknowledge support from the ERC Advanced grant, project 694888, C-SENSE.
JL acknowledges support from the Leverhulme Trust.
CBS acknowledges support from the Leverhulme Trust project on Breaking the Non-Convexity Barrier, and on Unveiling the Invisible, the Philip Leverhulme Prize, the EPSRC grant No. EP/S026045/1, EPSRC grant No. EP/M00483X/1, and EPSRC Centre No. EP/N014588/1, the European Union Horizon 2020 research and innovation programmes under the Marie Skłodowska-Curie grant agreement No. 691070 CHiPS and the Marie Skłodowska-Curie grant agreement No 777826, the Cantab Capital Institute for the Mathematics of Information, and the Alan Turing Institute.

\clearpage

\begin{small}
\bibliography{spring.bib}
\bibliographystyle{acm}

\clearpage

\appendix

\section{Additional numerical experiments}\label{app-C}

\begin{figure}[h!]
	\centering
	\subfloat[{SPRING-SAGA}]{ \includegraphics[width=0.425\linewidth]{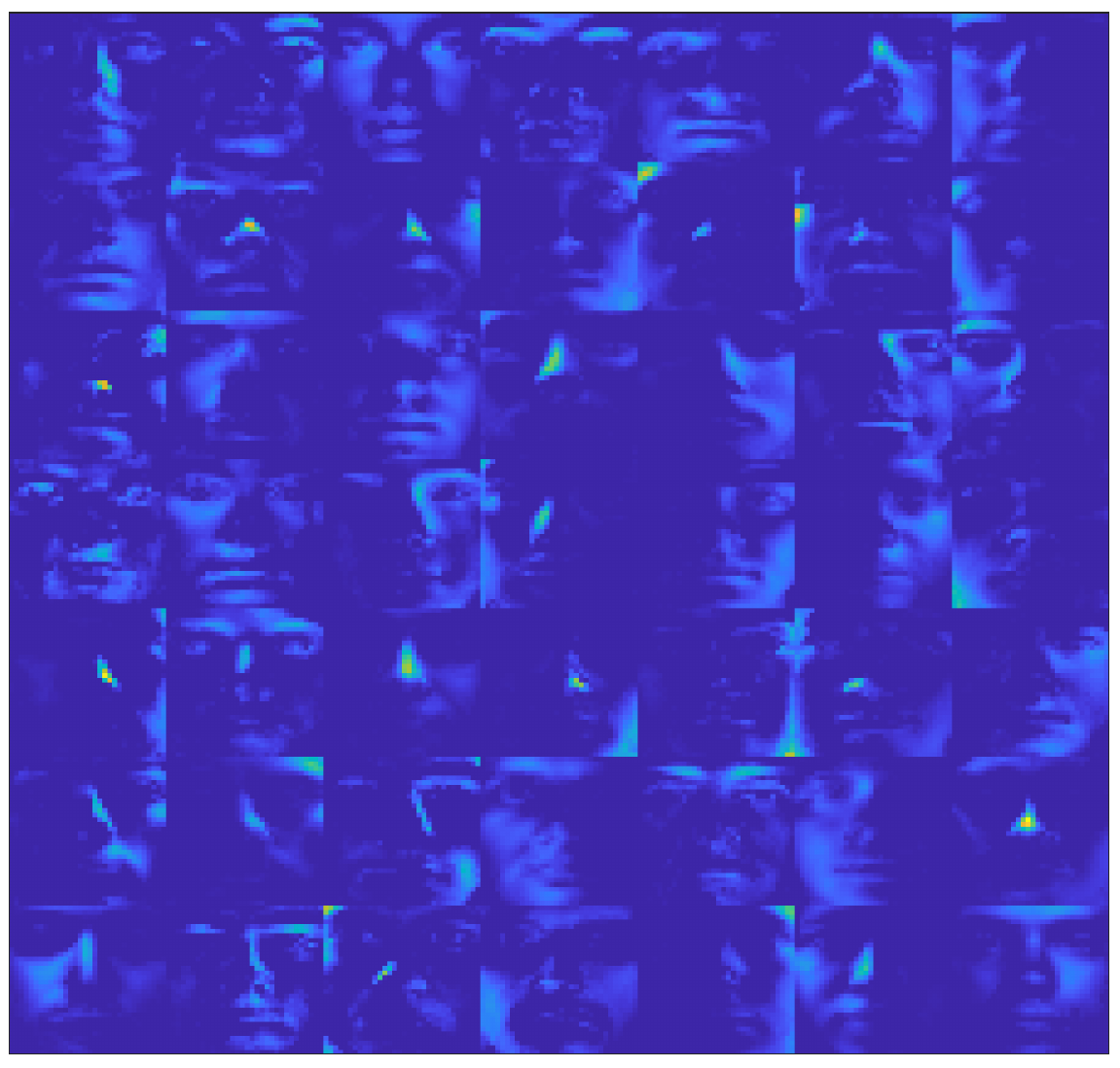} } \hspace{3mm}
	\subfloat[{PALM}]{ \includegraphics[width=0.425\linewidth]{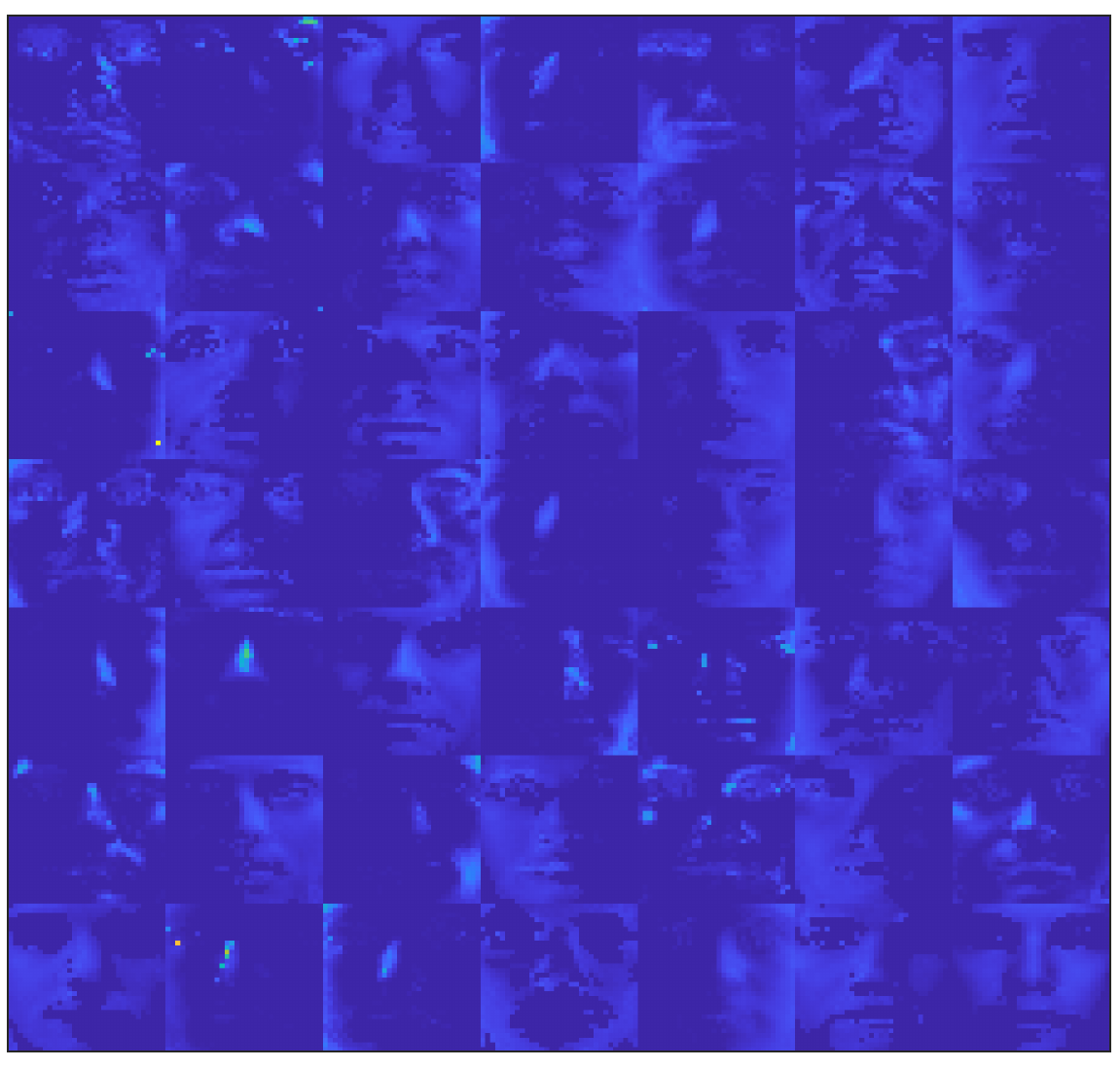} } \\[-2mm]
	\caption{Basis images from the Sparse-NMF experiment generated by SPRING-SAGA and PALM on the $10^{th}$ epoch for the Yale dataset.}
	\label{snmf_yale_ima}
\end{figure}

This section contains additional numerical experiments demonstrating the superiority of SPRING over PALM. Figure \ref{snmf_yale_ima} displays the results of our Sparse-NMF experiment on the Yale dataset. As with the ORL dataset, we clearly observe that SPRING-SAGA converges to clean basis images faster than PALM.

Figures \ref{bid_o} and \ref{kodak08_outfocus} show additional comparisons for blind image-deblurring where the images are blurred with an out-of-focus kernel. The settings here are the same for the BID experiments presented in the main text. Again, our SPRING-SARAH algorithm outperforms PALM and inertial-PALM.

\begin{figure}[h!]
	\centering
	\subfloat[{\tt Kodim08}]{ \includegraphics[width=0.425\linewidth]{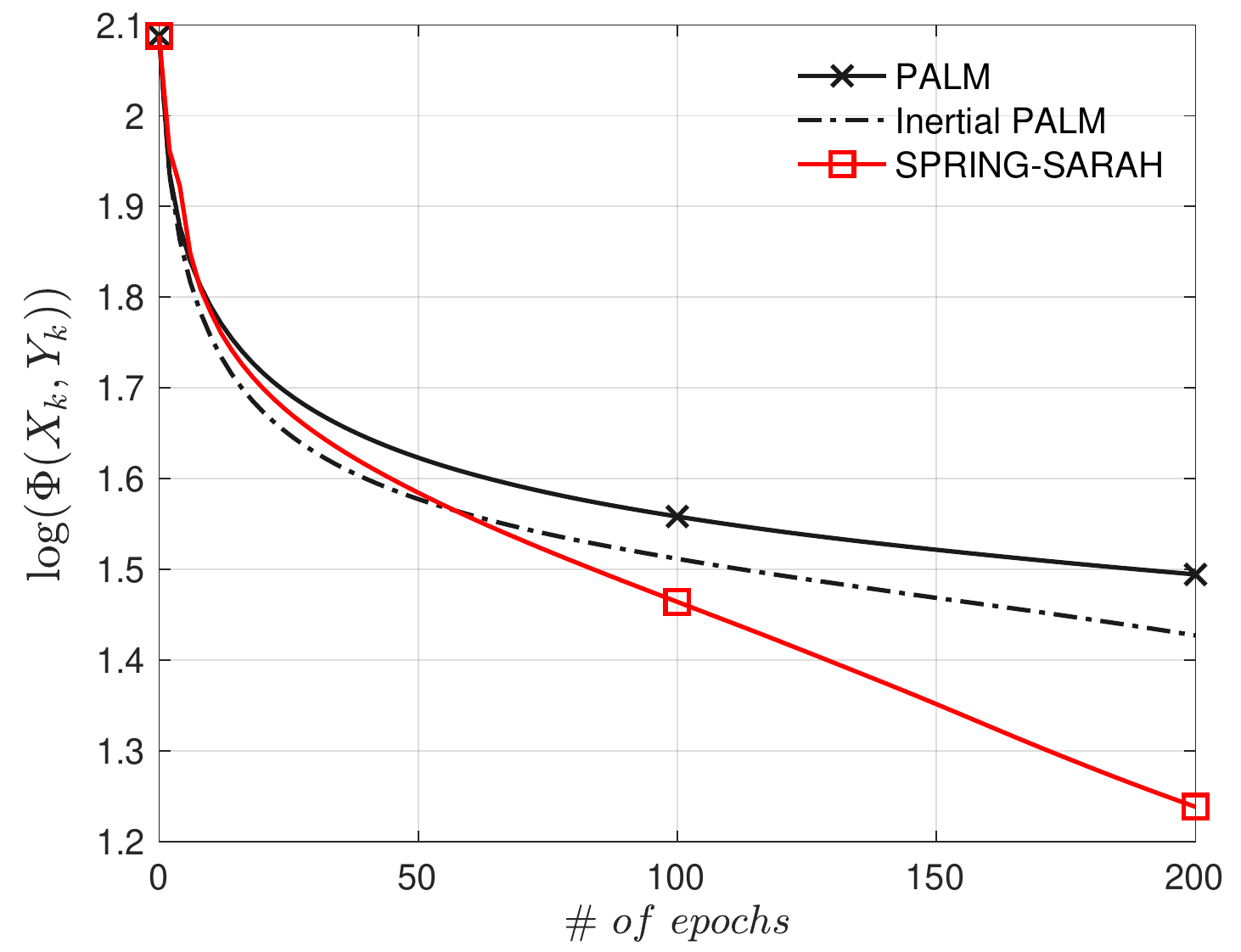} } 	\hspace{3mm}
	\subfloat[{\tt Kodim15}]{ \includegraphics[width=0.425\linewidth]{bid_kodim15.pdf} } \\[-2mm]
	\caption{Objective decrease comparison of blind image-deconvolution experiment on Kodim08 (left) and Kodim15 (right) images using an out-of-focus blur kernel.}
	\label{bid_o}
\end{figure}

\begin{figure}[h!]
	\centering
	\subfloat[Original image]{ \includegraphics[width=0.225\linewidth]{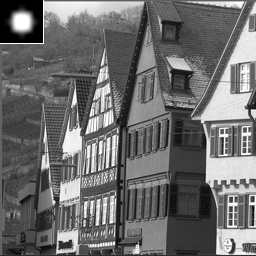} } 	\hspace{1mm}
	\subfloat[Blurred image]{ \includegraphics[width=0.225\linewidth]{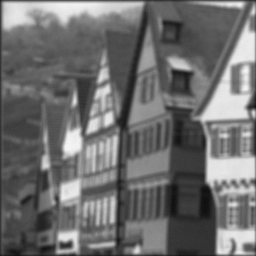} } \hspace{1mm}
	\subfloat[Recovered by PALM]{ \includegraphics[width=0.225\linewidth]{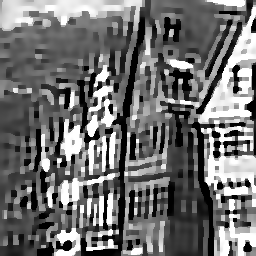} } 	\hspace{1mm}
	\subfloat[Recovered by SPRING]{ \includegraphics[width=0.225\linewidth]{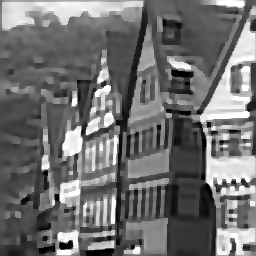} } \\[-2mm]
	\subfloat[Estimated kernel by PALM]{ \includegraphics[width=0.475\linewidth]{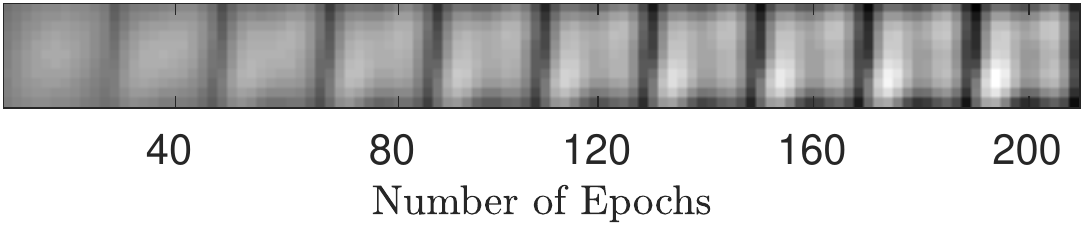} } 	\hspace{1mm}
	\subfloat[Estimated kernel by SPRING]{ \includegraphics[width=0.475\linewidth]{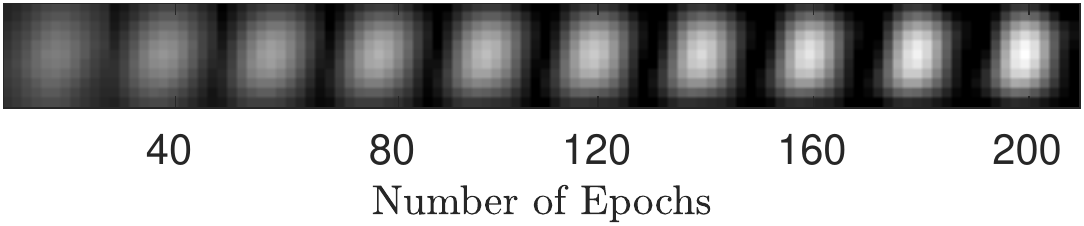} } \\[-2mm]
	\caption{Image and kernel reconstructions from the blind image-deconvolution experiment on the Kodim08 image using an out-of-focus blur kernel.}
	\label{kodak08_outfocus}
\end{figure}

\begin{figure}[h!]
	\centering
	\subfloat[Original image and kernel]{ \includegraphics[width=0.225\linewidth]{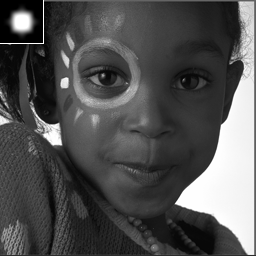} } 	\hspace{1mm}
	\subfloat[Blurred image]{ \includegraphics[width=0.225\linewidth]{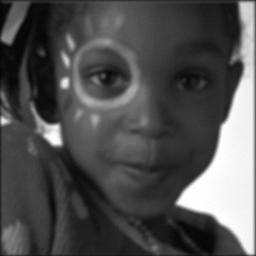} } \hspace{1mm}
	\subfloat[Recovered by PALM]{ \includegraphics[width=0.225\linewidth]{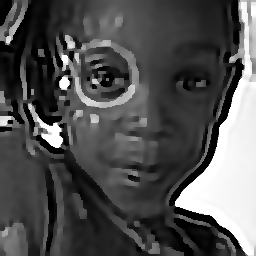} } 	\hspace{1mm}
	\subfloat[Recovered by SPRING]{ \includegraphics[width=0.225\linewidth]{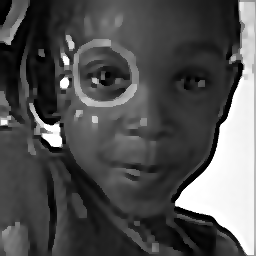} } \\[-2mm]
	\subfloat[Estimated kernel by PALM]{ \includegraphics[width=0.475\linewidth]{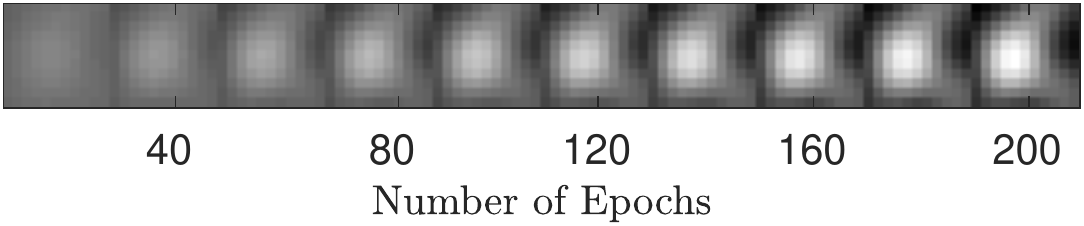} } 		\hspace{1mm}
	\subfloat[Estimated kernel by SPRING]{ \includegraphics[width=0.475\linewidth]{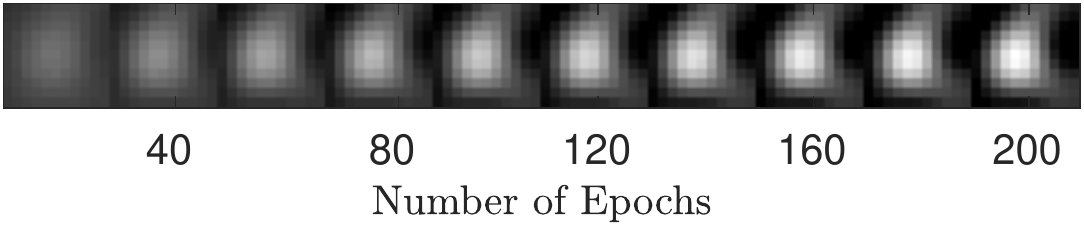} } \\[-2mm]
	\caption{Image and kernel reconstructions from the blind image-deconvolution experiment on the Kodim15 image using an out-of-focus blur kernel.}
	\label{kodak15_outfocus}
\end{figure}

\section{SAGA Variance Bound}
\label{app:sagaconst}

We define the SAGA gradient estimators $\tnablaxsaga$ and $\tnablaysaga$ as follows:
\begin{equation}
\begin{aligned}
    \tnablaxsaga(x_k,y_k) & = \frac{1}{b} \Big( \sum_{j \in J^x_k} \nablax F_j(x_k,y_k) - \nablax F_j(\varphi^j_k,y_k) \Big) + \frac{1}{n} \sum_{i=1}^n \nablax F_i(\varphi_k^i,y_k) \\
    \tnablaysaga(x_{k+1},y_k) & = \frac{1}{b} \Big( \sum_{j \in J^y_k} \nablay F_j(x_{k+1},y_k) - \nablax F_j(x_{k+1},\xi_k^j) \Big) + \frac{1}{n} \sum_{i=1}^n \nablax F_i(x_{k+1},\xi_k^i),
\end{aligned}
\end{equation}
where $J^x_k$ and $J^y_k$ are mini-batches containing $b$ indices. The variables $\varphi_k^i$ and $\xi_k^i$ follow the update rules $\varphi_{k+1}^i = x_k$ if $i \in J^x_k$ and $\varphi_{k+1}^i = \varphi_k^i$ otherwise, and $\xi_{k+1}^i = y_k$ if $i \in J^y_k$ and $\xi_{k+1}^i = \xi_k^i$ otherwise.

To prove our variance bounds, we require the following lemma.
\begin{lemma}
\label{lem:orth}
    Suppose $X_1, \cdots, X_t$ are independent random variables satisfying $\E X_i = 0$ for all $i$. Then
    \begin{equation}
        \E \|X_1 + \cdots + X_t\|^2 = \E [ \|X_1\|^2 + \cdots + \|X_t\|^2 ].
    \end{equation}
\end{lemma}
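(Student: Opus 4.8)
The plan is the standard ``orthogonality of centered independent summands'' argument. First I would expand the squared norm using bilinearity of the Euclidean inner product,
\[
\|X_1 + \cdots + X_t\|^2 = \sum_{i=1}^t \sum_{j=1}^t \langle X_i, X_j \rangle = \sum_{i=1}^t \|X_i\|^2 + \sum_{i \ne j} \langle X_i, X_j \rangle,
\]
and then take expectations, using linearity. The diagonal part contributes exactly $\E[\|X_1\|^2 + \cdots + \|X_t\|^2]$, so the whole lemma reduces to showing that $\E \langle X_i, X_j \rangle = 0$ for every pair $i \ne j$.

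For the off-diagonal terms I would write the inner product in coordinates, $\langle X_i, X_j \rangle = \sum_{\ell} (X_i)_\ell (X_j)_\ell$, and invoke independence of $X_i$ and $X_j$ to factor each summand as $\E[(X_i)_\ell (X_j)_\ell] = \E[(X_i)_\ell]\,\E[(X_j)_\ell]$. Since $\E X_i = 0$, every coordinate of $X_i$ has mean zero, so each such product vanishes; summing over $\ell$ gives $\E \langle X_i, X_j \rangle = \langle \E X_i, \E X_j \rangle = 0$. Substituting this back into the expanded identity yields the claim. An equivalent route is induction on $t$: the case $t = 1$ is trivial, and for the step one sets $S_{t-1} = X_1 + \cdots + X_{t-1}$, expands $\|S_{t-1} + X_t\|^2$, and notes that $S_{t-1}$ is a measurable function of $X_1,\dots,X_{t-1}$ and hence independent of $X_t$, so the cross term $\E\langle S_{t-1}, X_t\rangle = \langle \E S_{t-1}, \E X_t\rangle = 0$.

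There is essentially no real obstacle here; the computation is routine. The only points deserving a word of care are the implicit assumption that the second moments $\E\|X_i\|^2$ are finite (so that all the expectations above are well defined, and in particular $\E X_i$ exists), and the fact that independence is what licenses factoring the expectation of the coordinatewise products. I would state and apply the lemma for the Euclidean inner product on a finite-dimensional space $\R^m$, which is the only setting in which it is used in Appendix \ref{app:sagaconst}.
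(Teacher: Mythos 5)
Your argument is the same as the paper's: expand $\|\sum_i X_i\|^2$ into the double sum of inner products and observe that $\E\langle X_i,X_j\rangle=0$ for $i\neq j$ by independence and zero mean, leaving only the diagonal terms. You spell out the coordinatewise factorization and flag finiteness of second moments, but these are just the routine details the paper leaves implicit.
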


\begin{proof}
    Our hypotheses on these random variables imply $\E \langle X_i, X_j \rangle = 0$ for $i \not = j$. Therefore,
    \begin{equation}
        \E \|X_1 + \cdots + X_t\|^2 = \sum_{i,j = 1}^t \E \langle X_i, X_j \rangle = \E [ \|X_1\|^2 + \cdots + \|X_t\|^2 ].
    \end{equation}
\end{proof}

We are now prepared to prove that the SAGA gradient estimator is variance-reduced.

\begin{lemma}
    The SAGA gradient estimator satisfies
    \begin{equation}
    \begin{aligned}
        \E \|\tnablaxsaga(x_k,y_k) - \nablax F(x_k,y_k) \|^2 & \le \frac{1}{b n} \sum_{i=1}^n \Big\|\nablax F_i(x_k,y_k) - \nablax F_i(\varphi_k^i,y_k) \Big\|^2, \\
        \E \|\tnablaysaga(x_{k+1},y_k) - \nablay F(x_{k+1},y_k) \|^2 & \le \frac{4}{b n} \sum_{i=1}^n \Big\|\nablay F_i(x_k,y_k) - \nablay F_i(x_k, \xi_k^i) \Big\|^2 \\
        & \quad + \frac{6 M^2}{b} \E \|x_{k+1} - x_k\|^2,
    \end{aligned}
    \end{equation}
    as well as
    \begin{equation}
    \begin{aligned}
        \E \|\tnablaxsaga(x_k,y_k) - \nablax F(x_k,y_k) \| & \le \frac{1}{\sqrt{b n}} \sum_{i=1}^n \Big\|\nablax F_i(x_k,y_k) - \nablax F_i(\varphi_k^i,y_k) \Big\|, \\
        \E \|\tnablaysaga(x_{k+1},y_k) - \nablay F(x_{k+1},y_k) \| & \le \frac{2}{\sqrt{b n}} \sum_{i=1}^n \Big\|\nablay F_i(x_k,y_k) - \nablay F_i(x_k,\xi_k^i) \Big\| \\
        & \quad + \frac{\sqrt{6} M}{\sqrt{b}} \E \|x_{k+1} - x_k\|.
    \end{aligned}
    \end{equation}
\end{lemma}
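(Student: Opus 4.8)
The plan is to treat the two estimators separately and, in each case, to rewrite the estimation error as an average of $b$ (conditionally) mean-zero terms so that Lemma~\ref{lem:orth} applies, and then to control the resulting per-component deviations by Lipschitz continuity. For the $x$-block, set $d_i \defeq \nablax F_i(x_k,y_k) - \nablax F_i(\varphi_k^i,y_k)$ and $\bar d \defeq \tfrac1n\sum_{i=1}^n d_i$; these are deterministic given the first $k$ iterations. A one-line computation from the definition of $\tnablaxsaga$ gives $\tnablaxsaga(x_k,y_k) - \nablax F(x_k,y_k) = \tfrac1b\sum_{j\in J^x_k}(d_j - \bar d)$, an average of $b$ mean-zero terms. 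Applying Lemma~\ref{lem:orth} --- valid as stated when the batch is drawn with replacement, and otherwise obtained by a direct second-moment expansion that only sharpens the constant by the finite-population factor $\tfrac{n-b}{n-1}$ --- yields $\E\|\tnablaxsaga - \nablax F\|^2 \le \tfrac1{bn}\sum_{i=1}^n\|d_i - \bar d\|^2$, and dropping the centering (legitimate since $\sum_i\|d_i-\bar d\|^2 = \sum_i\|d_i\|^2 - n\|\bar d\|^2 \le \sum_i\|d_i\|^2$) gives the first inequality.

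The $y$-block proceeds identically up to one extra step. With $e_i \defeq \nablay F_i(x_{k+1},y_k) - \nablay F_i(x_{k+1},\xi_k^i)$, the same manipulation gives $\tnablaysaga(x_{k+1},y_k) - \nablay F(x_{k+1},y_k) = \tfrac1b\sum_{j\in J^y_k}(e_j - \bar e)$, hence $\E\|\cdot\|^2 \le \tfrac1{bn}\sum_i\|e_i\|^2$. The stated bound is phrased instead in terms of $\tilde e_i \defeq \nablay F_i(x_k,y_k) - \nablay F_i(x_k,\xi_k^i)$, i.e.\ with the first argument frozen at $x_k$ (so $\tilde e_i$ is measurable w.r.t.\ the first $k$ iterations, whereas $e_i$ is not, since it involves $x_{k+1}$). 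To pass from $e_i$ to $\tilde e_i$ I would use the $M$-Lipschitz continuity of each $\nabla F_i$, which gives $\|e_i - \tilde e_i\| \le 2M\|x_{k+1}-x_k\|$, and then split with Young's inequality $\|e_i\|^2 \le (1+c)\|\tilde e_i\|^2 + (1+c^{-1})\|e_i-\tilde e_i\|^2$; summing over $i$, taking $\E$, and choosing $c$ appropriately produces the claimed $\tfrac4{bn}\sum_i\|\tilde e_i\|^2 + \tfrac{6M^2}{b}\E\|x_{k+1}-x_k\|^2$.

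Both $\ell_1$ bounds then follow mechanically from the $\ell_2$ bounds: Jensen gives $\E\|X\|\le(\E\|X\|^2)^{1/2}$; sub-additivity of the square root splits the right-hand side of the $y$-bound; and applying $\|\cdot\|_2\le\|\cdot\|_1$ to the vector $(\|d_1\|,\dots,\|d_n\|)$ (resp.\ $(\|\tilde e_1\|,\dots,\|\tilde e_n\|)$) turns $\big(\sum_i\|d_i\|^2\big)^{1/2}$ into $\sum_i\|d_i\|$, while $(\E\|x_{k+1}-x_k\|^2)^{1/2}\ge\E\|x_{k+1}-x_k\|$ (Jensen in the correct direction) handles the drift term.

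The step requiring genuine care is the application of Lemma~\ref{lem:orth}: it is stated for \emph{independent} summands, whereas the increments $d_j-\bar d$ (resp.\ $e_j-\bar e$) indexed by a uniform size-$b$ subset are only exchangeable, so one must either pass to a with-replacement surrogate or expand the second moment by hand; and one must ensure the batch generating the $y$-error is independent of $x_{k+1}$ --- if a single batch were reused for both blocks, the residual coupling is exactly what gets absorbed into the $\|x_{k+1}-x_k\|^2$ term via the Lipschitz estimate. The remainder is pure bookkeeping: tracking constants through the Young split so as to land on precisely $4/(bn)$, $6M^2/b$, $2/\sqrt{bn}$, and $\sqrt6\,M/\sqrt b$.
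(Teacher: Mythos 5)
Your argument for the two second-moment bounds is sound and essentially mirrors the paper: center the estimator error as $\tfrac1b\sum_{j}(d_j-\bar d)$ (resp.\ $\tfrac1b\sum_j(e_j-\bar e)$), apply the orthogonality lemma, and for the $y$-block pass from $e_i$ to $\tilde e_i$ via the $M$-Lipschitz estimate $\|e_i-\tilde e_i\|\le 2M\|x_{k+1}-x_k\|$. The paper reaches the constants $4/(bn)$ and $6M^2/b$ by applying $\|a-c\|^2\le 2\|a-b\|^2+2\|b-c\|^2$ twice, whereas you run a single Young split $\|e_i\|^2\le(1+c)\|\tilde e_i\|^2+(1+c^{-1})\|e_i-\tilde e_i\|^2$; that is equally valid (indeed $c=2$ already yields $3/(bn)$ and $6M^2/b$). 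Your point about Lemma~\ref{lem:orth} and without-replacement sampling is a fair one --- the paper invokes it without comment, and you are right that the exact identity becomes an inequality with the finite-population correction, which only sharpens the bound.

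There is, however, a genuine gap in the $\ell_1$ bound for the $y$-block, and it is concentrated in your final parenthetical. You propose to get the first-moment bound mechanically from the second-moment bound: apply Jensen $\E\|Y\|\le(\E\|Y\|^2)^{1/2}$, use the $\ell_2$ estimate, then subadditivity of the square root. That chain terminates with the drift term in the form $\tfrac{\sqrt6 M}{\sqrt b}\big(\E\|x_{k+1}-x_k\|^2\big)^{1/2}$. The lemma, by contrast, has $\tfrac{\sqrt6 M}{\sqrt b}\,\E\|x_{k+1}-x_k\|$ on the right-hand side. Jensen gives $\E\|x_{k+1}-x_k\|\le(\E\|x_{k+1}-x_k\|^2)^{1/2}$, so the claimed right-hand side is \emph{smaller} than what your chain produces; the inequality you write (``$\ge$ \dots in the correct direction'') is true but unusable --- it lets you weaken your bound toward what you derived, not strengthen it toward what you need. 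The fix, which is what the paper actually does, is to work throughout with the conditional expectation $\EE_{k,x}[\cdot]$, i.e.\ conditioning on $J^x_k$ and hence on $x_{k+1}$. Under that conditioning, $\|x_{k+1}-x_k\|$ is a constant, so the conditional $\ell_2$ bound reads $\tfrac{4}{bn}\sum_i\|\tilde e_i\|^2+\tfrac{6M^2}{b}\|x_{k+1}-x_k\|^2$ with no inner expectation on the drift term, Jensen and subadditivity of $\sqrt{\cdot}$ produce $\tfrac{\sqrt6 M}{\sqrt b}\|x_{k+1}-x_k\|$ exactly, and taking the outer expectation $\E$ then gives $\E\|x_{k+1}-x_k\|$ by linearity. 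Your $x$-block $\ell_1$ bound does not have this problem because $d_i$ is $\mathcal F_k$-measurable, so there is no inner expectation to unwind.
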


\begin{proof}
    The proof amounts to computing expectations and applying the Lipschitz continuity of $\nablax F_i$.
\begin{equation}
\begin{aligned}
\label{eq:var}
    & \E \|\tnablaxsaga(x_k,y_k) - \nablax F(x_k,y_k) \|^2 \\
    = & \E \Big\| \frac{1}{b} \sum_{j \in J_k^x} \Big( \nablax F_{j}(x_k,y_k) - \nablax F_{j}(\varphi_k^{j},y_k) \Big) - \nablax F(x_k,y_k) + \frac{1}{n} \sum_{i=1}^n \nablax F_i(\varphi_k^i,y_k) \Big\|^2 \\
    \symnum{1}{\le} & \frac{1}{b^2} \E \sum_{j \in J_k^x} \Big\| \nablax F_{j}(x_k,y_k) - \nablax F_{j}(\varphi_k^{j},y_k) \Big\|^2 \\
    = & \frac{1}{b n} \sum_{i=1}^n \Big\|\nablax F_i(x_k,y_k) - \nablax F_i(\varphi_k^i,y_k) \Big\|^2.
\end{aligned}
\end{equation}
Inequality \numcirc{1} follows from Lemma \ref{lem:orth}.
We can also say that
\begin{equation}
    \begin{aligned}
    \label{eq:vargamma}
    \E \|\tnablaxsaga(x_k,y_k) - \nablax F(x_k,y_k) \| \symnum{1}{\le} & \sqrt{\E \|\tnablaxsaga(x_k,y_k) - \nablax F(x_k,y_k) \|^2} \\
    &\le \frac{1}{\sqrt{b n}} \sqrt{\sum_{i=1}^n \Big\|\nablax F_i(x_k,y_k) - \nablax F_i(\varphi_k^i,y_k) \Big\|^2} \\
    &\le \frac{1}{\sqrt{b n}} \sum_{i=1}^n \Big\|\nablax F_i(x_k,y_k) - \nablax F_i(\varphi_k^i,y_k) \Big\|.
    \end{aligned}
\end{equation}
Inequality \numcirc{1} is Jensen's.

We use an analogous argument for $\tnablaysaga$. Let $\EE_{k,x}$ denote the expectation conditional on the first $k$ iterations and $J^x_k$. By the same reasoning as in \eqref{eq:var},
\begin{equation}
    \EE_{k,x} \|\tnablaysaga(x_{k+1},y_k) - \nablay F(x_{k+1},y_k) \|^2 \le \frac{1}{b n} \sum_{i=1}^n \Big\|\nablay F_i(x_{k+1},y_k) - \nablay F_i(x_{k+1}, \xi_k^i) \Big\|^2.
\end{equation}
Applying the Lipschitz continuity of $\nablay F_i$,
\begin{equation}
\begin{aligned}
    & \frac{1}{b n} \sum_{i=1}^n \Big\|\nablay F_i(x_{k+1},y_k) - \nablay F_i(x_{k+1}, \xi_k^i) \Big\|^2 \\
    & \le \frac{2}{b n} \sum_{i=1}^n \Big\|\nablay F_i(x_{k+1},y_k) - \nablay F_i(x_k, y_k) \Big\|^2 + \frac{2}{b n} \sum_{i=1}^n \Big\|\nablay F_i(x_k,y_k) - \nablay F_i(x_{k+1}, \xi_k^i) \Big\|^2 \\
    & \le \frac{2 M^2}{b} \Big\|x_{k+1} - x_k \Big\|^2 + \frac{4}{b n} \sum_{i=1}^n \Big\|\nablay F_i(x_k,\xi_k^i) - \nablay F_i(x_{k+1}, \xi_k^i) \Big\|^2 + \frac{4}{b n} \sum_{i=1}^n \Big\|\nablay F_i(x_k,y_k) - \nablay F_i(x_k, \xi_k^i) \Big\|^2 \\
    & \le \frac{2 M^2}{b} \Big\|x_{k+1} - x_k \Big\|^2 + \frac{4 M^2}{b} \Big\|x_k - x_{k+1}\Big\|^2 + \frac{4}{b n} \sum_{i=1}^n \Big\|\nablay F_i(x_k,y_k) - \nablay F_i(x_k, \xi_k^i) \Big\|^2.
\end{aligned}
\end{equation}
Also, by the same reasoning as in \eqref{eq:vargamma},
\begin{equation}
    \begin{aligned}
    \EE_{k,x} \|\tnablaysaga(x_{k+1},y_k) - \nablay F(x_{k+1},y_k) \| \symnum{1}{\le} & \sqrt{\EE_{k,x} \|\tnablaysaga(x_{k+1},y_k) - \nablax F(x_{k+1},y_k) \|^2} \\
    &\le \sqrt{ \frac{4}{b n} \sum_{i=1}^n \Big\|\nablay F_i(x_k,y_k) - \nablay F_i(x_k,\xi_k^i) \Big\|^2 + \frac{6 M^2}{b} \|x_{k+1} - x_k\|^2} \\
    &\le \frac{2}{\sqrt{b n}} \sum_{i=1}^n \Big\|\nablay F_i(x_k,y_k) - \nablay F_i(x_k,\xi_k^i) \Big\| + \frac{\sqrt{6} M}{\sqrt{b}} \|x_{k+1} - x_k\|.
    \end{aligned}
\end{equation}
Applying the operator $\E$ to these two inequalities gives the desired result.
\end{proof}

\begin{lemma}
\label{lem:var}
    The SAGA gradient estimator is variance-reduced with
    \begin{equation}
    \begin{aligned}
        \Upsilon_{k+1} &= \frac{1}{b n} \Big( \sum_{i = 1}^n \|\nablax F_i(x_{k+1},y_{k+1}) - \nablax F_i(\varphi_{k+1}^i,y_{k+1})\|^2 + 4 \| \nablay F_i(x_{k+1},y_{k+1}) - \nablay F_i(x_{k+1}, \xi_{k+1}^i) \|^2 \Big), \\
        \Gamma_{k+1} & = \frac{1}{\sqrt{b n}} \Big( \sum_{i=1}^n \| \nablax F_i(x_{k+1},y_{k+1}) - \nablax F_i(\varphi_{k+1}^i,y_{k+1}) \| + 2 \| \nablay F_i(x_{k+1},y_{k+1}) - \nablay F_i(x_{k+1}, \xi_{k+1}^i) \| \Big),
    \end{aligned}
    \end{equation}
    and constants $V_1 = 6 M^2/b$, $V_2 = \sqrt{6} M / \sqrt{b}$, $V_\Upsilon = \frac{134 n L^2}{b^2}$, and $\rho = \frac{b}{2 n}$.
\end{lemma}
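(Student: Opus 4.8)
The statement has three components, matching the three conditions of Definition~\ref{def:varred}, and the first and third are essentially immediate. For the \textbf{MSE bounds} I would apply $\E$ to the two inequalities established in the preceding lemma and add them: since $\Upsilon_k$ is precisely $\tfrac{1}{bn}\sum_{i=1}^n(\|\nablax F_i(x_k,y_k)-\nablax F_i(\varphi_k^i,y_k)\|^2 + 4\|\nablay F_i(x_k,y_k)-\nablay F_i(x_k,\xi_k^i)\|^2)$, this gives
\begin{equation*}
\E\big[\|\tnablaxsaga(x_k,y_k)-\nablax F(x_k,y_k)\|^2 + \|\tnablaysaga(x_{k+1},y_k)-\nablay F(x_{k+1},y_k)\|^2\big] \le \Upsilon_k + \tfrac{6M^2}{b}\,\E\|x_{k+1}-x_k\|^2,
\end{equation*}
and since $\E\|x_{k+1}-x_k\|^2 \le \E\|z_{k+1}-z_k\|^2 \le \E\|z_{k+1}-z_k\|^2 + \|z_k-z_{k-1}\|^2$, this is \eqref{eq:varreduc} with $V_1 = 6M^2/b$; the companion bound on the expected norms gives the $\Gamma$-analog with $V_2 = \sqrt 6\,M/\sqrt b$. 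The \textbf{convergence-of-estimator} property then follows from geometric decay: once \eqref{eq:geo} is established, iterating it shows $\E\Upsilon_k\to0$ whenever $\E\|z_k-z_{k-1}\|^2\to0$, and since $\Gamma_k=\sum_{i=1}^s\|v_k^i\|\le\sqrt s\,\sqrt{\Upsilon_k}$ by Cauchy--Schwarz (here $s=2n$), also $\E\Gamma_k\le\sqrt{s\,\E\Upsilon_k}\to0$.

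The real work is the \textbf{geometric-decay} bound \eqref{eq:geo}. The plan is to treat the $x$- and $y$-parts of $\Upsilon_{k+1}$ separately and take expectations one mini-batch at a time, in the same nesting order as the preceding lemma (first over $J_k^x$, which determines $x_{k+1}$ and the new $\varphi_{k+1}^i$, then over $J_k^y$, which determines $\xi_{k+1}^i$). For a fixed index $i$ I would split on whether $i\in J_k^x$: with probability $b/n$ one has $\varphi_{k+1}^i=x_k$, contributing at most $\tfrac{b}{n}M^2\,\E[\|x_{k+1}-x_k\|^2\mid i\in J_k^x]$; with probability $1-b/n$ one has $\varphi_{k+1}^i=\varphi_k^i$, contributing $\big(1-\tfrac{b}{n}\big)\E[\|\nablax F_i(x_{k+1},y_{k+1})-\nablax F_i(\varphi_k^i,y_{k+1})\|^2\mid i\notin J_k^x]$. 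The correlation between $z_{k+1}-z_k$ and these events is harmless because $\Pr[A]\,\E[W\mid A]\le\E[W]$ for any non-negative $W$.

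To the ``old'' term I would apply Young's inequality $\|u+w\|^2\le(1+\varepsilon)\|u\|^2+(1+\varepsilon^{-1})\|w\|^2$ with $u=\nablax F_i(x_k,y_k)-\nablax F_i(\varphi_k^i,y_k)$, choosing $\varepsilon$ so that $(1-\tfrac{b}{n})(1+\varepsilon)=1-\tfrac{b}{2n}$; then $\varepsilon=\tfrac{b}{2n-2b}$ and $1+\varepsilon^{-1}\le\tfrac{3n}{b}$, while $M$-Lipschitz continuity bounds the residual $w=[\nablax F_i(x_{k+1},y_{k+1})-\nablax F_i(x_k,y_k)]-[\nablax F_i(\varphi_k^i,y_{k+1})-\nablax F_i(\varphi_k^i,y_k)]$ by $2M\|z_{k+1}-z_k\|$. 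After the $\tfrac{1}{bn}\sum_{i=1}^n$ prefactor the old terms reassemble into $(1-\tfrac{b}{2n})\Upsilon_k^x$ and the new/residual terms into a multiple of $\tfrac{nM^2}{b^2}\,\E\|z_{k+1}-z_k\|^2$. The $y$-part is identical modulo the factor $4$ and one extra step: after conditioning on $J_k^x$ the ``$\xi_{k+1}^i=y_k$'' contribution and the Young residuals must be transported from $(x_{k+1},y_k)$ back to $(x_k,y_k)$, which (exactly as in the preceding lemma) injects further $M^2\|x_{k+1}-x_k\|^2$ terms. Summing the two parts, using $M\le L$, and collecting all numerical factors ($b/n$, $3n/b$, the $4$, the Young constants, the $\tfrac{1}{bn}\!\cdot\!n$ cancellations) yields $\E\Upsilon_{k+1}\le(1-\tfrac{b}{2n})\Upsilon_k+\tfrac{134\,nL^2}{b^2}\,\E\|z_{k+1}-z_k\|^2$, i.e.\ \eqref{eq:geo} with $\rho=b/(2n)$ and $V_\Upsilon=134\,nL^2/b^2$ (the $\|z_k-z_{k-1}\|^2$ term is only needed for uniformity with Definition~\ref{def:varred} and can be added for free).

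The main obstacle will be the last two steps of bookkeeping: picking $\varepsilon$ to extract \emph{exactly} the contraction factor $1-\rho$ without letting the blow-up $\varepsilon^{-1}=\Theta(n/b)$ exceed $O(n/b)$, and then tracking every combinatorial and Lipschitz constant through the nested $J_k^x$-then-$J_k^y$ expectation so that the accumulated constant does not exceed $134$. No individual step is deep, but the order of conditioning for the $y$-estimator terms must match that of the preceding lemma or spurious terms appear.
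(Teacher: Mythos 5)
Your proposal is correct, and it is structurally the same argument the paper uses (exploit the SAGA memory update rule, Young's inequality, Lipschitz continuity), but with two small deviations worth noting, both to your credit.

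For the geometric-decay bound, the paper applies the Young-type inequality $\|a-c\|^2 \le (1+\delta)\|a-b\|^2 + (1+\delta^{-1})\|b-c\|^2$ two or three times per term, peeling off the $y_{k+1}\to y_k$ change and the $x_{k+1}\to x_k$ change separately, and then chooses $\delta = \tfrac{b}{6n}$ so that $(1+\delta)^3(1-\tfrac{b}{n}) \le 1-\tfrac{b}{2n}$; after aggregating all the resulting Lipschitz terms, $V_\Upsilon = 134 n L^2 / b^2$ falls out. You instead apply Young once per term with $u$ the ``old'' SAGA deviation and $w$ the combined Lipschitz residual, choose $\varepsilon$ so that $(1-\tfrac{b}{n})(1+\varepsilon) = 1-\tfrac{b}{2n}$ exactly, and bound $\|w\| \le 2M\|z_{k+1}-z_k\|$. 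This is a valid alternative decomposition; tracking constants it actually produces a somewhat smaller $V_\Upsilon$ than $134 n L^2/b^2$ (since your looser $1+\varepsilon^{-1}\le 3n/b$ can be tightened to $2n/b$), so the stated lemma certainly follows. For the convergence-of-estimator property, the paper does a separate direct verification that $\EE\|x_k - \varphi_k^i\|^2 \to 0$ via an auxiliary recursion; you instead derive $\EE\Upsilon_k \to 0$ as an immediate corollary of the geometric-decay inequality itself (a geometrically-weighted sum of terms tending to zero tends to zero), and then obtain $\EE\Gamma_k\to 0$ from Cauchy--Schwarz with $s=2n$. That is cleaner and equally rigorous. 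Your MSE bounds and identification of $V_1 = 6M^2/b$, $V_2 = \sqrt 6 M/\sqrt b$ from the preceding lemma match the paper exactly. The one point to be careful about in a full write-up is the conditioning you flag yourself: the indicator $\mathbf{1}_{i\in J_k^x}$ and $\|z_{k+1}-z_k\|^2$ are correlated, and the inequality $\Pr[A]\,\E[W\mid A]\le \E[W]$ is indeed what licenses dropping the conditioning on the residual terms while retaining the $(1-b/n)$ factor on the deterministic ``old'' terms.
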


\begin{proof}
We must show that $\E \Upsilon_{k+1}$ decreases at a geometric rate. We first bound the MSE of the estimator $\tnablaxsaga$. Applying the inequality $\|a-c\|^2 \le (1+\delta)\|a-b\|^2 + (1+\delta^{-1})\|b-c\|^2$ twice,
\begin{equation}
\begin{aligned}
    & \sfrac{1}{b n} \sum_{i=1}^n \E \Big\|\nablax F_i(x_{k+1},y_{k+1}) - \nablax F_i(\varphi_{k+1}^i,y_{k+1}) \Big\|^2 \\
    & \le \sfrac{1 + \delta}{b n} \E \sum_{i=1}^n \Big\|\nablax F_i(x_k,y_k) - \nablax F_i(\varphi_{k+1}^i,y_{k+1}) \Big\|^2 + \sfrac{1 + \delta^{-1}}{b n} \sum_{i=1}^n \|\nablax F_i(x_{k+1},y_{k+1}) - \nablax F_i(x_k,y_k)\|^2 \\
    & \le \tfrac{(1 + \delta)^2}{b n} \E \sum_{i=1}^n \Big\|\nablax F_i(x_k,y_k) - \nablax F_i(\varphi_{k+1}^i,y_k) \Big\|^2 + \tfrac{(1 + \delta^{-1})(1+\delta)}{b n} \E \sum_{i=1}^n \Big\|\nablax F_i(\varphi_{k+1},y_{k+1}) - \nablax F_i(\varphi_{k+1}^i,y_k) \Big\|^2 \\
    & \quad + \tfrac{1 + \delta^{-1}}{b n} \sum_{i=1}^n \|\nablax F_i(x_{k+1},y_{k+1}) - \nablax F_i(x_k,y_k)\|^2.
    \end{aligned}
\end{equation}
Next, we compute the expectation of the first term.
\begin{equation}
\begin{aligned}
    & \le \tfrac{(1 + \delta)^2 (1 - b / n)}{b n} \sum_{i=1}^n \Big\|\nablax F_i(x_k,y_k) - \nablax F_i(\varphi_k^i,y_k) \Big\|^2 \\
    & + \tfrac{(1 + \delta^{-1})(1+\delta)}{b n} \E \sum_{i=1}^n \Big\|\nablax F_i(\varphi_{k+1}^i,y_{k+1}) - \nablax F_i(\varphi_{k+1}^i,y_k) \Big\|^2  + \tfrac{1 + \delta^{-1}}{b n} \sum_{i=1}^n \|\nablax F_i(x_{k+1},y_{k+1}) - \nablax F_i(x_k,y_k)\|^2 \\
    & \le \tfrac{(1 + \delta)^2 (1 - b / n)}{b n} \sum_{i=1}^n \Big\|\nablax F_i(x_k,y_k) - \nablax F_i(\varphi_k^i,y_k) \Big\|^2 + \tfrac{(1 + \delta^{-1})(1+\delta) M^2}{b} \E \Big\|y_{k+1} - y_k \Big\|^2 \\
    & \quad + \tfrac{(1 + \delta^{-1}) M^2}{b} \E \|z_{k+1} - z_k\|^2.
\end{aligned}
\end{equation}

We bound the MSE of the estimator $\tnablaysaga$ similarly.
\begin{equation}
\begin{aligned}
    & \tfrac{1}{b n} \sum_{i=1}^n \E \Big\|\nablay F_i(x_{k+1},y_{k+1}) - \nablay F_i(x_{k+1}, \xi_{k+1}^i) \Big\|^2 \\
    & \le \tfrac{1 + \delta}{b n} \E \sum_{i=1}^n \Big\|\nablay F_i(x_{k+1},y_k) - \nablay F_i(x_{k+1}, \xi_{k+1}^i) \Big\|^2 + \tfrac{1 + \delta^{-1}}{b n} \E \sum_{i=1}^n \|\nablay F_i(x_{k+1},y_{k+1}) - \nablay F_i(x_{k+1},y_k)\|^2 \\
    & = \tfrac{(1 + \delta)(1 - b / n)}{b n} \E \sum_{i=1}^n \Big\|\nablay F_i(x_{k+1},y_k) - \nablay F_i(x_{k+1}, \xi_k^i) \Big\|^2 + \tfrac{1 + \delta^{-1}}{b n} \E \sum_{i=1}^n \|\nablay F_i(x_{k+1},y_{k+1}) - \nablay F_i(x_{k+1},y_k)\|^2 \\
    & \le \tfrac{(1 + \delta)^2 (1 - b / n)}{b n} \E \sum_{i=1}^n \Big\|\nablay F_i(x_k,y_k) - \nablay F_i(x_{k+1}, \xi_k^i) \Big\|^2 + \tfrac{1 + \delta^{-1}}{b n} \E \sum_{i=1}^n \|\nablay F_i(x_{k+1},y_{k+1}) - \nablay F_i(x_{k+1},y_k)\|^2 \\
    & \quad + \tfrac{(1+\delta)(1 + \delta^{-1})(1 - b / n)}{b n} \E \sum_{i=1}^n \|\nablay F_i(x_{k+1},y_k) - \nablay F_i(x_k,y_k)\|^2 \\
    & \le \tfrac{(1 + \delta)^3 (1 - b / n)}{b n} \E \sum_{i=1}^n \Big\|\nablay F_i(x_k,y_k) - \nablay F_i(x_k, \xi_k^i) \Big\|^2 + \tfrac{1 + \delta^{-1}}{b n} \E \sum_{i=1}^n \|\nablay F_i(x_{k+1},y_{k+1}) - \nablay F_i(x_{k+1},y_k)\|^2 \\
    & \quad + \tfrac{(1+\delta)(1 + \delta^{-1})(1 - b / n)}{b n} \E \sum_{i=1}^n \|\nablay F_i(x_{k+1},y_k) - \nablay F_i(x_k,y_k)\|^2 \\
    & \quad + \tfrac{(1+\delta)^2(1 + \delta^{-1})(1 - b / n)}{b n} \E \sum_{i=1}^n \|\nablay F_i(x_{k+1},\xi_k^i) - \nablay F_i(x_k,\xi_k^i)\|^2,
    \end{aligned}
\end{equation}

and, by the Lipschitz continuity of $\nablay F_i$,

\begin{equation}
\begin{aligned}
    & \tfrac{1}{b n} \sum_{i=1}^n \E \Big\|\nablay F_i(x_{k+1},y_{k+1}) - \nablay F_i(x_{k+1}, \xi_{k+1}^i) \Big\|^2 \\
    & \le \frac{(1 + \delta)^3 (1 - b / n)}{b n} \E \sum_{i=1}^n \Big\|\nablay F_i(x_k,y_k) - \nablay F_i(x_k, \xi_k^i) \Big\|^2 + \frac{(1 + \delta^{-1}) L^2_y}{b} \E \|y_{k+1} - y_k\|^2 \\
    & \quad + \frac{(1+\delta)(1 + \delta^{-1})(1 - b / n) M^2}{b} \E \|x_{k+1} - x_k\|^2 \\
    & \quad + \frac{(1+\delta)^2 (1 + \delta^{-1})(1 - b / n) M^2}{b} \E \|x_{k+1} - x_k\|^2.
\end{aligned}
\end{equation}
With
\begin{equation}
    \Upsilon_{k+1} = \frac{1}{b n} \Big( \sum_{i = 1}^n \|\nablax F_i(x_{k+1},y_{k+1}) - \nablax F_i(\varphi_{k+1}^i,y_{k+1})\|^2 + 4 \| \nablay F_i(x_{k+1},y_{k+1}) - \nablay F_i(x_{k+1}, \xi_{k+1}^i) \|^2 \Big),
\end{equation}
we can now say
\begin{equation}
    \begin{aligned}
        \E \Upsilon_{k+1} &\le (1 + \delta)^3 (1 - b / n) \Upsilon_k + \frac{4 (1 + \delta^{-1}) L^2_y}{b} \E \|y_{k+1} - y_k\|^2 \\
        & \quad + \frac{8 (1+\delta)^2(1 + \delta^{-1})(1 - b / n) M^2}{b} \E \|x_{k+1} - x_k\|^2 \\
        & \quad + \frac{(1+\delta)(1 + \delta^{-1}) M^2}{b} \E \|y_{k+1} - y_k\|^2 + \frac{(1 + \delta^{-1}) M^2}{b} \E \|z_{k+1} - z_k\|^2 \\
        &\le (1 + \delta)^3 (1 - b / n) \Upsilon_k + \frac{14 (1 + \delta)^2 (1 + \delta^{-1}) L^2}{b} \E [\|z_{k+1} - z_k\|^2],
    \end{aligned}
    \end{equation}
    where $L \defeq \max\{L_x,L_y,M\}$. Choosing $\delta = \frac{b}{6 n}$, we are ensured that $(1 + \delta)^3 (1 - b / n) \le 1 - \frac{b}{2 n}$, producing the inequality
    \begin{equation}
    \begin{aligned}
        \E \Upsilon_{k+1} & \le (1 - \sfrac{b}{2 n}) \Upsilon_k + \frac{14 (1 + \frac{b}{6 n})^2 (6 n / b + 1) L^2}{b} \E [\|z_{k+1} - z_k\|^2] \\
        & \le (1 - \sfrac{b}{2 n}) \Upsilon_k + \frac{134 n L^2}{b^2} \E [\|z_{k+1} - z_k\|^2].
    \end{aligned}
    \end{equation}
    This proves the geometric decay of $\Upsilon_k$ in expectation.

    All that is left is to show that if $\EE \|z_k - z_{k-1}\|^2 \to 0$, then so do $\Upsilon_k$ and $\Gamma_k$. We begin by showing that $\sum_{i=1}^n \EE \|\nablax F_i(x_k,y_k) - \nablax F_i(\varphi_k^i,y_k)\|^2 \to 0$.
    \begin{equation}
        \begin{aligned}
            \sum_{i=1}^n \EE \|\nablax F_i(x_k,y_k) - \nablax F_i(\varphi_k^i,y_k)\|^2 & \le L_x^2 \sum_{i=1}^n \EE \|x_k - \varphi_k^i\|^2 \\
            & \le L_x^2 n \Big(1 + \tfrac{2 n}{b} \Big) \EE \|x_k - x_{k-1}\|^2 + \Big( 1 + \tfrac{b}{2 n} \Big) \sum_{i=1}^n \EE \|x_{k-1} - \varphi_k^i\|^2 \\
            & \le L_x^2 n \Big(1 + \tfrac{2 n}{b} \Big) \EE \|x_k - x_{k-1}\|^2 + \Big( 1 + \tfrac{b}{2 n} \Big) \Big( 1 - \tfrac{b}{n} \Big) \sum_{i=1}^n \EE \|x_{k-1} - \varphi_{k-1}^i\|^2 \\
            & \le L_x^2 n \Big(1 + \tfrac{2 n}{b} \Big) \EE \|x_k - x_{k-1}\|^2 + \Big( 1 - \tfrac{b}{2 n} \Big) \sum_{i=1}^n \EE \|x_{k-1} - \varphi_{k-1}^i\|^2 \\
            & \le L_x^2 n \Big( 1 + \tfrac{2 n}{b} \Big) \sum_{\ell=1}^k \Big(1 - \tfrac{b}{2 n} \Big)^{k-\ell} \EE \|x_\ell - x_{\ell-1}\|^2.
        \end{aligned}
    \end{equation}
    Because $\mathbb{E} \|x_k - x_{k-1}\|^2 \to 0$, it is clear that the bound on the right goes to zero as $k \to \infty$. An analogous argument shows that $\sum_{i=1}^n \EE \|\nablax F_i(x_k,y_k) - \nablax F_i(x_k, \xi_k^i)\|^2 \to 0$ as well. The fact that $\EE \Gamma_k \to 0$ follows similarly:
    \begin{equation}
    \begin{aligned}
        \sum_{i=1}^n \EE \|\nablax F_i(x_k,y_k) - \nablax F_i(\varphi_k^i,y_k)\| & \le L_x \sum_{i=1}^n \EE \|x_k - \varphi_k^i\| \\
        & \le n L_x \|x_k - x_{k-1}\| + \sum_{i=1}^n \EE \|x_{k-1} - \varphi_k^i \| \\
        & \le n L_x \|x_k - x_{k-1}\| + \Big( 1- \frac{b}{n} \Big) \sum_{i=1}^n \EE \|x_{k-1} - \varphi_{k-1}^i \| \\
        & \le n L_x \sum_{\ell = 1}^k \Big( 1 - \frac{b}{n} \Big)^{k - \ell} \EE \|x_\ell - x_{\ell-1}\|.
    \end{aligned}
    \end{equation}
    As $\|x_k - x_{k-1}\|^2 \to 0$, it follows that $\|x_k - x_{k-1}\| \to 0$ (because Jensen's inequality implies $\EE \|x_k - x_{k-1}\| \le \sqrt{\EE \|x_k - x_{k-1}\|^2} \to 0$), so the bound above implies $\EE \Gamma_k \to 0$ as well.

\end{proof}

\section{SARAH Variance Bound}
\label{app:sarahconst}

As in the previous section, we use $J_k^x$ to denote the mini-batches used to approximate $\nablax F(x_k,y_k)$, and we use $J_k^y$ to denote the mini-batches used to approximate $\nablay F(x_{k+1},y_k)$.

\begin{lemma}
    The SARAH gradient estimator is variance reduced with
    \begin{equation}
    \begin{aligned}
        \Upsilon_{k+1} & = \|\tnablax^{\textnormal{\tiny SARAH}} (x_k, y_k) - \nablax F(x_k, y_k)\|^2 + \| \tnablay^{\textnormal{\tiny SARAH}} (x_{k+1}, y_k) - \nablay F(x_{k+1}, y_k) \|^2, \\
        \Gamma_{k+1} & = \|\tnablax^{\textnormal{\tiny SARAH}} (x_k, y_k) - \nablax F(x_k, y_k)\| + \| \tnablay^{\textnormal{\tiny SARAH}} (x_{k+1}, y_k) - \nablay F(x_{k+1}, y_k) \|,
    \end{aligned}
    \end{equation}
    and constants $\rho = 1 / p$, $V_1 = V_\Upsilon = 2 L^2$, and $V_2 = 2 L$.
\end{lemma}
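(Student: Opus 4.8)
The plan is to check the three requirements of Definition~\ref{def:varred} one at a time, reading everything off the recursive structure of the SARAH estimator. Write $e_k^x \defeq \tnablax(x_k,y_k) - \nablax F(x_k,y_k)$ and $e_k^y \defeq \tnablay(x_{k+1},y_k) - \nablay F(x_{k+1},y_k)$, so that the claimed quantities are $\Upsilon_{k+1} = \|e_k^x\|^2 + \|e_k^y\|^2$ and $\Gamma_{k+1} = \|e_k^x\| + \|e_k^y\|$ (i.e.\ $s=2$, $v^1_{k+1}=e_k^x$, $v^2_{k+1}=e_k^y$). With probability $1/p$ the estimator returns the exact partial gradient and $e_k^x=e_k^y=0$; on the complementary event the SARAH recursion gives $e_k^x = \xi_k^x + e_{k-1}^x$ with $\xi_k^x \defeq \tfrac1b\sum_{j\in J_k^x}\big(\nablax F_j(x_k,y_k) - \nablax F_j(x_{k-1},y_{k-1})\big) - \big(\nablax F(x_k,y_k) - \nablax F(x_{k-1},y_{k-1})\big)$, and likewise $e_k^y = \xi_k^y + e_{k-1}^y$ with $\xi_k^y$ the analogous centered fluctuation formed between $(x_{k+1},y_k)$ and $(x_k,y_{k-1})$ using the fresh minibatch $J_k^y$. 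Note $e_{k-1}^x$ and $e_{k-1}^y = \tnablay(x_k,y_{k-1}) - \nablay F(x_k,y_{k-1})$ are measurable with respect to the first $k$ iterations.

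First I would establish the \textbf{Geometric Decay} property~\eqref{eq:geo}. Conditioning on the first $k$ iterations (and, for the $y$-block, additionally on $J_k^x$, which fixes $x_{k+1}$), the facts $\E\xi_k^x = \E\xi_k^y = 0$ kill the cross terms in $\|\xi_k^x+e_{k-1}^x\|^2$ and $\|\xi_k^y+e_{k-1}^y\|^2$; then Lemma~\ref{lem:orth} (the minibatch-variance bound, used exactly as in the SAGA computation) together with the $M$-Lipschitz continuity of each $\nablax F_i$ yields $\E\|\xi_k^x\|^2 \le \tfrac1{bn}\sum_i \|\nablax F_i(x_k,y_k) - \nablax F_i(x_{k-1},y_{k-1})\|^2 \le \tfrac{M^2}{b}\|z_k-z_{k-1}\|^2$ and, since $\tnablay$ is anchored at $x_{k+1}$, $\E\|\xi_k^y\|^2 \le \tfrac{M^2}{b}\big(\E\|z_{k+1}-z_k\|^2 + \|z_k-z_{k-1}\|^2\big)$. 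Averaging over the $1/p$ restart coin multiplies the $\|e_{k-1}\|^2$-contributions by $1-1/p$ and can only shrink the noise contributions, so
\[
\E\Upsilon_{k+1} \le \big(1-\tfrac1p\big)\Upsilon_k + \tfrac{M^2}{b}\E\|z_{k+1}-z_k\|^2 + \tfrac{2M^2}{b}\|z_k-z_{k-1}\|^2 \le \big(1-\tfrac1p\big)\Upsilon_k + 2L^2\big(\E\|z_{k+1}-z_k\|^2 + \|z_k-z_{k-1}\|^2\big),
\]
using $b\ge1$ and $M\le L$; this is~\eqref{eq:geo} with $\rho=1/p$ and $V_\Upsilon = 2L^2$, and since $1-1/p\le1$ it also delivers the squared half of the \textbf{MSE Bound}~\eqref{eq:varreduc} with $V_1 = 2L^2$.

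For the norm half of the MSE bound I would instead use the triangle inequality $\|e_k^x\|\le\|\xi_k^x\|+\|e_{k-1}^x\|$ with $\E\|\xi_k^x\|\le(\E\|\xi_k^x\|^2)^{1/2}\le\tfrac{M}{\sqrt b}\|z_k-z_{k-1}\|\le L\|z_k-z_{k-1}\|$ (Jensen), and the analogue $\E\|\xi_k^y\|\le L\big(\E\|z_{k+1}-z_k\| + \|z_k-z_{k-1}\|\big)$; averaging over the restart coin and adding the two blocks gives $\E\Gamma_{k+1}\le\Gamma_k + 2L\big(\E\|z_{k+1}-z_k\| + \|z_k-z_{k-1}\|\big)$, i.e.\ $V_2 = 2L$. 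For \textbf{Convergence of the Estimator}, take full expectations in the geometric-decay inequality to obtain a scalar recursion $u_{k+1}\le(1-\tfrac1p)u_k + \delta_k$ with $u_k\defeq\EE\Upsilon_k$ and $\delta_k\defeq 2L^2(\EE\|z_{k+1}-z_k\|^2 + \EE\|z_k-z_{k-1}\|^2)\to0$; unrolling and splitting the geometric tail gives $u_k\to0$, and then $\EE\Gamma_k\le\sqrt{2\,\EE\Upsilon_k}\to0$ because $\Gamma_k^2\le2\Upsilon_k$.

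The main obstacle is not any individual estimate but the conditioning bookkeeping: one must keep the restart coin and the minibatches $J_k^x$, $J_k^y$ mutually independent, recall that $x_{k+1}$ (hence $e_k^y$ and the quantity $\|z_{k+1}-z_k\|$) only becomes measurable once $J_k^x$ is revealed, and correctly identify $e_{k-1}^y$ as the error anchored at $(x_k,y_{k-1})$ rather than $(x_k,y_k)$. This asymmetry between the two blocks is exactly what forces the extra $\|z_{k+1}-z_k\|$ terms and the factors of $2$ in $V_\Upsilon$ and $V_2$; the remaining steps---the minibatch-variance identity and the geometric-series estimate---are routine.
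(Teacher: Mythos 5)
Your proof is correct and follows the same route as the paper's Appendix~\ref{app:sarahconst}: conditional unbiasedness of the fresh increment kills the cross term in the MSE recursion, Lipschitz continuity of $\nabla F_i$ bounds the remaining second moment, the restart coin contributes the $(1-1/p)$ contraction, and Jensen handles the norm half and the convergence of $\Gamma_k$. The only cosmetic difference is that you center the minibatch increment $\xi_k$ up front and invoke the orthogonality lemma (picking up an unused extra factor of $1/b$) rather than expanding $\|\cdot\|^2$ term by term as the paper does; both yield the same constants.
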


\begin{proof}

Let $\mathbb{E}_{k,p}$ denote the expectation conditional on the first $k$ iterations and the event that we do not compute the full gradient at iteration $k$. The conditional expectation of the SARAH gradient estimator in this case is
\begin{equation}
\begin{aligned}
    \mathbb{E}_{k,p} \tnablaxsarah(x_k,y_k) & = \frac{1}{b} \mathbb{E}_{k,p} \Big(\sum_{j \in J_k^x} \nablax F_j(x_k,y_k) - F_j(x_{k-1}, y_{k-1}) \Big) + \tnablaxsarah (x_{k-1}, y_{k-1}) \\
    & = \nablax F(x_k,y_k) - \nablax F(x_{k-1}, y_{k-1}) + \tnablaxsarah (x_{k-1}, y_{k-1}).
\end{aligned}
\end{equation}
We begin with a bound on $\EE_{k,p} \|\tnablax^{\textnormal{\tiny SARAH}} (x_k, y_k) - \nablax F(x_k, y_k)\|^2$.
\begin{equation}
\begin{aligned}
    & \mathbb{E}_{k,p} \|\tnablax^{\textnormal{\tiny SARAH}} (x_k, y_k) - \nablax F(x_k, y_k)\|^2 \notag \\
    = & \mathbb{E}_{k,p} \| \tnablaxsarah (x_{k-1}, y_{k-1}) - \nablax F(x_{k-1}, y_{k-1}) + \nablax F(x_{k-1}, y_{k-1}) - \nablax F(x_k, y_k) \\
    & \quad \quad \quad \quad + \tnablaxsarah (x_k, y_k) - \tnablaxsarah (x_{k-1}, y_{k-1}) \|^2 \\
    = & \Big\| \tnablaxsarah (x_{k-1}, y_{k-1}) - \nablax F(x_{k-1}, y_{k-1}) \Big\|^2 + \Big\|\nablax F(x_{k-1},y_{k-1}) - \nablax F(x_k, y_k) \Big\|^2 \\
    & \quad + \mathbb{E}_{k,p} \Big\| \tnablaxsarah (x_k, y_k) - \tnablaxsarah (x_{k-1}, y_{k-1}) \Big\|^2 \notag \\
    & \quad + 2 \langle \nablax F(x_{k-1}, y_{k-1}) - \tnablaxsarah (x_{k-1}, y_{k-1}), \nablax F(x_k, y_k) - \nablax F(x_{k-1}, y_{k-1}) \rangle \\
    & \quad - 2 \Big\langle \nablax F(x_{k-1}, y_{k-1}) - \tnablaxsarah (x_{k-1}, y_{k-1}), \mathbb{E}_{k,p} \Big[ \tnablaxsarah (x_k,y_k) - \tnablaxsarah (x_{k-1}, y_{k-1}) \Big] \Big\rangle \\
    & \quad - 2 \Big\langle \nablax F(x_k, y_k) - \nablax F(x_{k-1}, y_{k-1}), \mathbb{E}_{k,p} \Big[ \tnablaxsarah (x_k,y_k) - \tnablaxsarah (x_{k-1}, y_{k-1}) \Big] \Big\rangle.
\end{aligned}
\end{equation}
To simplify the inner-product terms, we use the fact that
\begin{equation}
\begin{aligned}
    & \mathbb{E}_{k,p}[\tnablaxsarah (x_k, y_k) - \tnablaxsarah (x_{k-1}, y_{k-1})] = \nablax F(x_k, y_k) - \nablax F(x_{k-1}, y_{k-1}).
\end{aligned}
\end{equation}
With this equality established, we see that the second inner product is equal to
\begin{equation}
\begin{aligned}
    & - 2 \Big\langle \nablax F(x_{k-1}, y_{k-1}) - \tnablaxsarah (x_{k-1}, y_{k-1}), \mathbb{E}_{k,p} \Big[ \tnablaxsarah (x_k, y_k) - \tnablaxsarah (x_{k-1}, y_{k-1}) \Big] \Big\rangle \notag \\
    = & - 2 \langle \nablax F(x_{k-1}, y_{k-1}) - \tnablaxsarah (x_{k-1}, y_{k-1}), \nablax F(x_k, y_k) - \nablax F(x_{k-1}, y_{k-1}) \rangle,
\end{aligned}
\end{equation}
so the first two inner-products sum to zero. The third inner product is equal to
\begin{equation}
\begin{aligned}
    & - 2 \Big\langle \nablax F(x_k, y_k) - \nablax F(x_{k-1}, y_{k-1}), \mathbb{E}_{k,p} \Big[ \tnablaxsarah (x_k, y_k) - \tnablaxsarah (x_{k-1}, y_{k-1}) \Big] \Big\rangle \notag \\
    = & - 2 \langle \nablax F(x_k, y_k) - \nablax F(x_{k-1}, y_{k-1}), \nablax F(x_k, y_k) - \nablax F(x_{k-1}, y_{k-1}) \rangle \\
    = & - 2 \| \nablax F(x_k, y_k) - \nablax F(x_{k-1}, y_{k-1}) \|^2.
\end{aligned}
\end{equation}
Altogether, we have
\begin{equation}
\begin{aligned}
    & \mathbb{E}_{k,p} \|\tnablaxsarah (x_k, y_k) - \nablax F(x_k, y_k)\|^2 \\
    &\le \Big\| \tnablaxsarah (x_{k-1}, y_{k-1}) - \nablax F(x_{k-1}, y_{k-1}) \Big\|^2 - \|\nablax F(x_k, y_k) - \nablax F(x_{k-1}, y_{k-1})\|^2 \notag \\
    & + \mathbb{E}_{k,p} \| \tnablaxsarah (x_k, y_k) - \tnablaxsarah (x_{k-1}, y_{k-1})\|^2 \\
    &\le \Big\| \tnablaxsarah (x_{k-1}, y_{k-1}) - \nablax F(x_{k-1}, y_{k-1}) \Big\|^2 + \mathbb{E}_{k,p} \| \tnablaxsarah (x_k, y_k) - \tnablaxsarah (x_{k-1}, y_{k-1})\|^2.
\end{aligned}
\end{equation}
We can bound the second term by computing the expectation.
\begin{equation}
\begin{aligned}
    \mathbb{E}_{k,p} \| \tnablaxsarah (x_k, y_k) - \tnablaxsarah (x_{k-1}, y_{k-1})\|^2 = & \mathbb{E}_{k,p} \Big\| \frac{1}{b} \Big(\sum_{j \in J_k^x} \nablax F_j(x_k, y_k) - \nablax F_j(x_{k-1}, y_{k-1}) \Big) \Big\|^2 \\
    &\le \frac{1}{b} \mathbb{E}_{k,p} \Big[\sum_{j \in J_k^x} \| \nablax F_j(x_k, y_k) - \nablax F_j(x_{k-1}, y_{k-1})\|^2 \Big] \\
    = & \frac{1}{n} \sum_{i=1}^n \| \nablax F_i(x_k, y_k) - \nablax F_i(x_{k-1}, y_{k-1})\|^2.
\end{aligned}
\end{equation}
The inequality is due to the convexity of the function $x \mapsto \|x\|^2$. This results in the recursive inequality
\begin{equation}
\begin{aligned}
    & \mathbb{E}_{k,p} \|\tnablaxsarah(x_k,y_k) - \nablax F(x_k, y_k)\|^2 \notag \\
    &\le \Big\| \tnablaxsarah (x_{k-1}, y_{k-1}) - \nablax F(x_{k-1}, y_{k-1}) \Big\|^2 + \frac{1}{n} \sum_{i=1}^n  \| \nablax F_i(x_k, y_k) - \nablax F_i(x_{k-1}, y_{k-1})\|^2.
\end{aligned}
\end{equation}
This bounds the MSE under the condition that the full gradient is not computed. When the full gradient is computed, the MSE is equal to zero, so
\begin{equation}
\begin{aligned}
    & \E \|\tnablaxsarah (x_k, y_k) - \nablax F(x_k, y_k)\|^2 \notag \\
    &\le \Big(1 - \frac{1}{p} \Big) \Big( \Big\| \tnablaxsarah (x_{k-1}, y_{k-1}) - \nablax F(x_{k-1}, y_{k-1}) \Big\|^2 + \frac{1}{n} \sum_{i=1}^n  \| \nablax F_i(x_k, y_k) - \nablax F_i(x_{k-1}, y_{k-1})\|^2 \Big) \\
    &\le \Big(1 - \frac{1}{p} \Big) \Big\| \tnablaxsarah (x_{k-1}, y_{k-1}) - \nablax F(x_{k-1}, y_{k-1}) \Big\|^2 + M^2 \|z_k - z_{k-1}\|^2.
\end{aligned}
\end{equation}
By symmetric arguments, analogous results hold for $\E \|\tnablaysarah (x_{k+1}, y_k) - \nablay F(x_{k+1}, y_k)\|^2$:
\begin{equation}
\begin{aligned}
    & \E \|\tnablaysarah (x_{k+1}, y_k) - \nablay F(x_{k+1}, y_k)\|^2 \notag \\
    &\le \Big(1 - \frac{1}{p} \Big) \Big\| \tnablaysarah (x_k, y_{k-1}) - \nablay F(x_k, y_{k-1}) \Big\|^2 + M^2 (\E \| x_{k+1} - x_k \|^2 + \|y_k - y_{k-1}\|^2).
\end{aligned}
\end{equation}
Combining the two inequalities above, we have shown
\begin{equation}
    \begin{aligned}
    & \E [ \|\tnablaxsarah (x_k, y_k) - \nablax F(x_k, y_k)\|^2 + \|\tnablaysarah (x_{k+1}, y_k) - \nablay F(x_{k+1}, y_k)\|^2 ] \\
    &\le \Big( 1 - \frac{1}{p} \Big) \Big( \|\tnablaxsarah (x_{k-1}, y_{k-1}) - \nablax F(x_{k-1}, y_{k-1})\|^2 + \|\tnablaysarah (x_k, y_{k-1}) - \nablay F(x_k, y_{k-1})\|^2 \Big) \\
    & + 2 L^2 \E [ \|z_{k+1} - z_k\|^2 + \|z_k - z_{k-1}\|^2 ]
    \end{aligned}
\end{equation}
We have also established the geometric decay property:
\begin{equation}
    \begin{aligned}
        \E \Upsilon_{k+1} \le \Big( 1 - \frac{1}{p} \Big) \Upsilon_k + 2 L^2 \E [ \|z_{k+1} - z_k\|^2 + \|z_k - z_{k-1}\|^2 ],
    \end{aligned}
\end{equation}
justifying the choice of constants $\rho = 1 / p$ and $V_1 = V_\Upsilon = 2 L^2$. Similar bounds hold for $\Gamma_k$ due to Jensen's inequality:
\begin{equation}
\begin{aligned}
    & \E \|\tnablaxsarah (x_k, y_k) - \nablax F(x_k, y_k)\| \notag \\
    &\le \sqrt{\E \|\tnablaxsarah (x_k, y_k) - \nablax F(x_k, y_k)\|^2} \notag \\
    &\le \sqrt{\Big(1 - \frac{1}{p} \Big) \Big\| \tnablaxsarah (x_{k-1}, y_{k-1}) - \nablax F(x_{k-1}, y_{k-1}) \Big\|^2 + M^2 \| z_k - z_{k-1} \|^2} \\
    &\le \sqrt{\Big(1 - \frac{1}{p} \Big)} \Big\| \tnablaxsarah (x_{k-1}, y_{k-1}) - \nablax F(x_{k-1}, y_{k-1}) \Big\| + M \|z_k - z_{k-1}\|.
\end{aligned}
\end{equation}
Applying an analogous result for $\tnablay$ gives the desired bound on $\Gamma_k$.

It is also easy to see that $\EE \|z_k - z_{k-1}\|^2 \to 0$ implies $\EE \Upsilon_k \to 0$:
\begin{equation}
    \begin{aligned}
        \EE \Upsilon_k & \le \Big( 1 - \frac{1}{p} \Big) \EE \Upsilon_{k-1} + 2 L^2 \EE [ \|z_{k+1} - z_k\|^2 + \|z_k - z_{k-1}\|^2 ] \\
        & \le 2 L^2 \sum_{\ell=1}^k \Big(1 - \frac{1}{p} \Big)^{k - \ell} \EE [ \|z_{\ell+1} - z_\ell\|^2 + \|z_\ell - z_{\ell-1}\|^2.
    \end{aligned}
\end{equation}
As $\EE \|\tnablaxsarah (x_k, y_k) - \nablax F(x_k, y_k)\|^2 \to 0$, so does $\EE \|\tnablaxsarah (x_k, y_k) - \nablax F(x_k, y_k)\| \to 0$ by Jensen's inequality, so it is clear that $\EE \Gamma_k \to 0$ as well.

\end{proof}

\end{small}

\end{document}